\documentclass{article} 

\usepackage[dvipsnames,svgnames,table]{xcolor}

\usepackage[colorinlistoftodos,textsize=footnotesize,backgroundcolor=yellow!40,bordercolor=red,linecolor=red]{todonotes}

\usepackage{titling}   
\usepackage{titlesec}  

\usepackage{mathtools} 
\usepackage[skins,theorems]{tcolorbox} 
\usepackage{amsmath}   
\usepackage{amsthm}    
\usepackage{amssymb}   

\usepackage[a4paper, margin=3cm]{geometry}

\usepackage{tikz}

\usepackage{lineno}

\mathtoolsset{showonlyrefs}

\newtheorem{theorem}{Theorem} 
[section] 
\newtheorem{lemma}[theorem]{Lemma} 
\newtheorem{proposition}[theorem]{Proposition} 

\theoremstyle{definition} 
\newtheorem{remark}[theorem]{Remark} 

\newcommand{\N}{\mathbb{N}}    
\newcommand{\R}{\mathbb{R}}    
\newcommand{\Z}{\mathbb{Z}}    

\newcommand{\forwhich}[0]{{ \ ; \ }} 

\newcommand{\et}{\eta}

\DeclareMathOperator*{\supp}{supp}

\newcommand\norm[1]{\left\Arrowvert#1\right\Arrowvert} 
\newcommand\abs[1]{\left\vert#1\right\vert} 

\usepackage[colorlinks=true, linkcolor=black, urlcolor=black, citecolor=DarkBlue, filecolor=black]{hyperref} 

\numberwithin{equation}{section}

\pretitle{\begin{center}\sffamily\huge}    
\posttitle{\par\end{center}}               
\preauthor{\begin{center}\large\sffamily}   
\postauthor{\par\end{center}}              
\predate{\begin{center}\large\itshape\sffamily} 
\postdate{\par\end{center}}                

\usepackage{tocloft} 

\title{Improved Morse Index Stability for Sequences of Harmonic Maps from Degenerating Riemann Surfaces}          
\author{Francesca Da Lio, Tristan Rivi\`ere and Dominik Schlagenhauf \thanks{Department of Mathematics, ETH Zentrum,
CH-8092 Z\"urich, Switzerland.}} 
\date{\today}                

\begin{document}

\maketitle 

\begin{abstract}
We study the stability of the extended Morse index, defined as the number of negative and zero eigenvalues of the Jacobi operator, for sequences of harmonic maps on degenerating Riemann surfaces. As the conformal structure approaches the boundary of moduli space, collar collapse creates major analytical challenges. We analyze the second variation of the energy under these degenerations and identify conditions ensuring upper semicontinuity of the extended Morse index. \par
Refining earlier results of the first and second authors in \cite{DGR25}, we obtain sharper control of the spectrum of the Jacobi operator on degenerating domains. A key new aspect is the explicit contribution of geodesics arising as limits of the images of degenerating collars. We show that these neck regions converge to geodesic segments whose Morse index contributes nontrivially to the limiting extended index.
\end{abstract}

\medskip
\noindent\textbf{Keywords.}
Harmonic maps, degenerating Riemann surfaces, Morse index, bubble-tree convergence, conformally invariant variational problems, minimal surfaces.

\noindent\textbf{MSC 2020.}
35J92, 58E05, 35J50, 35J47, 58E12, 58E20, 53A10, 53C43, 53C22.

\tableofcontents
\vspace{10mm} 


\section{Introduction}

The study of harmonic maps between Riemannian manifolds plays a central role in geometric analysis, as they arise as critical points of the Dirichlet energy 
\begin{equation}
\int_{\Sigma} |\nabla u_k|^2 \, d\mathrm{vol}_{h} < \infty,
\end{equation}
where $(\Sigma,h)$ is an arbitrary smooth closed and oriented $2$-dimensional Riemannian manifold,
$u$ is in the space
\begin{equation}
W^{1,2}(\Sigma, \mathcal{N})
:=
\left\{
u \in W^{1,2}(\Sigma,\mathbb{R}^m)
\; ; \;
u(x)\in \mathcal{N}
\quad \mbox{a.e.}
\right\},
\end{equation}
and $(\mathcal{N},g)$ is an arbitrary closed smooth Riemannian manifold of arbitrary dimension $n$ that we assume without loss of generality thanks to Nash's embedding theorem to be embedded in Euclidean space $\R^m$.

A fundamental aspect of their variational structure is captured by the Morse index, which measures the instability of a harmonic map through the number of directions along which the energy decreases.
In many geometric problems, one is led to consider sequences of harmonic maps defined on Riemann surfaces whose conformal structures degenerate. 
By the Deligne--Mumford compactness theory, such degeneration is characterized by the collapse of simple closed geodesics and the formation of nodal surfaces. In this process, the sequence of harmonic maps typically exhibits a loss of compactness through bubbling phenomena, where energy concentrates and gives rise to harmonic spheres, as well as the formation of neck regions connecting different scales.
While the analytic behavior of harmonic maps under degeneration has been extensively studied, particularly in relation to energy quantization and bubble tree convergence (see e.g.\ \cite{CLW, Jos91, Jos06, LaRi1,Par96,RT18, RTZ, Zhu10}), much less is understood about the stability properties encoded by the Morse index. A natural and fundamental question is therefore:
\begin{center}
\emph{How does the Morse index behave along a sequence of harmonic maps from degenerating Riemann surfaces?}
\end{center}
More precisely, one expects that the index should asymptotically decompose in the limit into contributions coming from the limiting base map on the normalized surface and from the bubbles formed during the degeneration process. However, the presence of long cylindrical neck regions introduces additional difficulties, as they may carry nontrivial unstable modes that are not localized purely on the base or the bubbles.

\par
The lower semicontinuity of the Morse index is a fairly general and robust property. In this setting, the proof follows the approach outlined in the appendix of \cite{DRS} and will be presented in \cite{Schl}.
In contrast, the upper semicontinuity of the Morse index is typically much more delicate, as it requires precise estimates of the sequence of solutions in regions where compactness is lost.
In the joint paper \cite{DGR25} of the first and second authors with Gianocca, a new method has been developed to prove the upper-semicontinuity of the Morse index and the nullity associated to conformally invariant variational problems in $2$ dimensions, which include the case of harmonic maps. This new theory has turned out to be very efficient in several other geometric analysis settings (see the recent results in the context of biharmonic maps \cite{Mi, MiRi3}, of Willmore surfaces \cite{MiRi2}, of Yang--Mills connections \cite{GL, GLR}, of Ginzburg--Landau energies \cite{DG}, of constant mean curvature surfaces \cite{Work} and of Sacks--Uhlenbeck approximations \cite{DRS, Sch26}).
Related index estimates  for harmonic maps from surfaces, based on  L.Simon's three circles theorem approach, can be found in \cite{Yin21} and   in \cite{HiLa25} .

\par
The stability result proved in the paper \cite{DGR25} also includes the case of critical points $u_k$ of $E$ from a sequence of closed hyperbolic Riemann surfaces $(\Sigma,h_k)$ of fixed genus $g\geq 2$ and with Gauss curvature $\kappa_{h_k}=-1$ into a closed manifold $(\mathcal{N},\mathfrak{g})$ of class at least $C^2$, when the sequence of constant Gauss curvature metrics $h_k$ is degenerating in the moduli space of $\Sigma$.
Let us also remark at this point that the proof of this paper also applies also to the simpler case of degenerating flat tori of the form $\mathbb T_k=\R^2/\!\sim_k$, where we use the equivalence relation $(x,y)\sim_k (x,y)+ n(1,0)+ m(a_k,b_k)$ (for some $m,n\in \N$) and here $a_k\in (0,1/2)$ and $b_k \to \infty$.

\par
Before describing this particular case, we would like to recall some well-known facts regarding compactness properties of degenerating Riemann surfaces of fixed genus $g\geq 2$.

\medskip
\noindent
\textbf{Degenerating Riemann Surfaces.}
We recall Deligne--Mumford's description of the loss of compactness of the conformal class for a sequence of Riemann surfaces with a fixed topology (see for instance Proposition 5.1 of \cite{Hum} and Section 4 in \cite{Zhu10} for an extensive explanation of the framework). \par

Let $\left(\Sigma_k, h_k, c_k\right)$  be a sequence of closed Riemann
surfaces of fixed genus $g(\Sigma)>1$.  Then, modulo extraction of a subsequence,
 there exist finitely many pairwise
disjoint simple closed geodesics
\begin{equation}
    \gamma_k^1,\ldots,\gamma_k^J \subset (\Sigma_k,h_k)
\end{equation}
such that
\begin{equation}
    \ell_k^j:=\operatorname{length}_{h_k}(\gamma_k^j)\longrightarrow 0,
    \qquad j=1,\ldots,J.
\end{equation}
Cutting the surface $\Sigma_k$ along these geodesics and passing to the
limit gives a complete finite-area hyperbolic surface $(\Sigma,h)$ with
$2J$ cusps, or punctures,
$
    \{\xi_1^j,\xi_2^j\}_{j=1,\ldots,J}.
$
Here the two punctures $\xi_1^j$ and $\xi_2^j$ correspond to the two
boundary components created by cutting along the degenerating geodesic
$\gamma_k^j$.
By Huber's removable singularity theorem, the punctures of
$(\Sigma,c)$ are removable from the conformal point of view. Hence the
complex structure $c$ extends uniquely across the points
$
    \{\xi_1^j,\xi_2^j\}_{j=1,\ldots,J},
$
and one obtains a closed Riemann surface
$
    (\overline{\Sigma},\bar c)
$
such that
$
    \Sigma
    =
    \overline{\Sigma}
    \setminus
    \{\xi_1^j,\xi_2^j\}_{j=1,\ldots,J}.
$
The associated Deligne--Mumford nodal surface is obtained by identifying,
for each $j=1,\ldots,J$, the two points $\xi_1^j$ and $\xi_2^j$.
Finally, for each $k$ there exists a diffeomorphism
\begin{equation}
    \tau_k:
    \Sigma_k \setminus \bigcup_{j=1}^J \gamma_k^j
    \longrightarrow
    \Sigma
\end{equation}
such that, after identifying the two ends of
$\Sigma_k \setminus \gamma_k^j$ with the two punctures
$\xi_1^j,\xi_2^j$, the following hold:
\begin{equation}
\begin{aligned}
\label{EQ:DM-convergence}
 &\text{\rm (i)}\quad
\tau_k:
\Sigma_k \setminus \bigcup_{j=1}^J \gamma_k^j
\longrightarrow \Sigma
\text{ is a diffeomorphism},\\
&\text{\rm (iii)}\quad
(\tau_k)_* h_k \longrightarrow h
\quad\text{in } C^\infty_{\mathrm{loc}}(\Sigma),\\
&\text{\rm (iv)}\quad
(\tau_k)_* c_k \longrightarrow c
\quad\text{in } C^\infty_{\mathrm{loc}}(\Sigma).
\end{aligned}
\end{equation}

\medskip
\noindent
\textbf{Harmonic Maps.}
Let $u_k: \left(\Sigma_k, h_k\right) \to \mathcal N \subset \R^m$ be a sequence of harmonic maps with uniformly bounded energy
\begin{equation}
\sup_k E(u_k) \coloneqq \int_{\Sigma_k} |\nabla u_k|^2 \ dvol_{h_k} <\infty.
\end{equation}
These also satisfy the harmonic map equation
\begin{equation}
\label{EQ: HArm map Equation for u_k}
-\Delta u_k = \mathbb A_{u_k}(\nabla u_k,\nabla u_k), \qquad \text{ in } \Sigma_k,
\end{equation}
where $\mathbb A_q(\cdot,\cdot)$ denotes the second fundamental form of the embedding $\mathcal N\subset \R^m$.
By performing a   bubble tree analysis (see e.g  \cite{Par96}) we may assume, up to subsequences, that the energy of the sequence may concentrate at finitely many points where asymptotically bubbles form and that, away from this concentration points,  the sequence is converging to a base harmonic map $u_\infty:(\Sigma,h) \to \mathcal N$. It is clear that we cannot apriori exclude 
  the possibility of energy concentration and bubble formation   in the collar regions.
More precisely, the sequence $\left(\tau_k\right)_* u_k$ converges on any compact subset of $(\Sigma,h)$ away from the punctures in the bubble tree sense to the bubble tree
\begin{equation}
(u_\infty, v_\infty^1, \ldots, v_\infty^Q).
\end{equation}
As described  in \cite{Zhu10}, the $\varrho$-thin part neighborhood of the collapsing geodesic $\gamma^j_k$ can be represented as long cylinders $P^\varrho_{k,j}=\left[T_{k,j}^{1, \varrho}, T_{k,j}^{2, \varrho}\right] \times S^1$ called collars, which are asymptotically exhausting the infinite cylinder $\R \times S^1$.

To each $j$ there exist $2M_j \in \N$ sequences $(a^\ell_{k,j})_k$, $(b^\ell_{k,j})_k$ (for $j \in \{1,\ldots, M\}$, $\ell \in \{1,\ldots, M_j\}$)
 with  $T_{k,j}^{1, \varrho} \le a^1_{k,j} \ll b^1_{k,j} \le \dots \le  a^{M_j}_{k,j} \ll b^{M_j}_{k,j} \le T_{k,j}^{2, \varrho}$ which decompose $P^\varrho_{k,j}$ as follows:

There exists a possibly empty collection of non-trivial bubbles $\sigma_\infty^{j\ell}$ (for $j \in \{1,\ldots, M\}$, $\ell \in \{1,\ldots, M_j\}$) of $\mathcal N$ and there exist geodesic segments $\widetilde \gamma^{j\ell}_\infty$ (for $j \in \{1,\ldots, M\}$, $\ell \in \{0,\ldots, M_j\}$) of $\mathcal N$ such that:
\begin{enumerate}
\item For $j \in \{1,\ldots, M\}$, $\ell \in \{1,\ldots, M_j\}$ the map $u_k:[a^\ell_{k,j} , b^\ell_{k,j}] \times S^1 \to \mathcal N$ converges $(k\to\infty)$ in a suitable way to the bubble $\sigma_\infty^{j\ell}$.
\item For $j \in \{1,\ldots, M\}$, $\ell \in \{1,\ldots, M_j-1\}$ the map $u_k:[b^\ell_{k,j} , a^{\ell+1}_{k,j}] \times S^1 \to \mathcal N$ converges $(k\to\infty)$ in a suitable way (after reparametrizing to unit speed) to the geodesic segment $\widetilde \gamma^{j\ell}_\infty$.
\item For $j \in \{1,\ldots, M\}$, the maps $u_k:[T_{k,j}^{1, \varrho} , a^1_{k,j}] \times S^1 \to \mathcal N$ and $u_k:[b^{M_j}_{k,j},T_{k,j}^{2, \varrho}] \times S^1 \to \mathcal N$ converge $(k\to\infty)$ in a suitable way (after reparametrizing to unit speed) to the geodesic segments $\widetilde \gamma^{j,0}_\infty$ and $\widetilde \gamma^{j,M_j}_\infty$ accordingly.
\end{enumerate}
For details on the precise sense of convergence, we refer to \eqref{EQ: JINhd2njkd21JNDs82fn28u4fhn2f} and Proposition \ref{congeo}.
We define the limiting average lengths of the images of the collars by
\begin{equation}\label{lambdaj}
\Lambda^j := \lim_{\eta \to 0} \, \limsup_{k \to +\infty}
\int_{\eta^{-1}\delta_k^j}^{\eta}
\frac{1}{2\pi} \int_0^{2\pi}
\left| \frac{d u_k}{dr} \right| \, d\theta \, dr, \qquad j \in \{1,\ldots, M\}.
\end{equation}
In \eqref{lambdaj}, the coordinates $(r,\theta)$ are taken with respect to suitable conformal charts of the collars
\begin{equation}
P_{k,j}^\varrho \to B_\eta(0) \setminus \overline{B_{\delta_k^j/\eta}(0)},
\end{equation}
where
\begin{equation}
\delta_k^j := \exp(-1/l_k^j)\to 0 \qquad \mbox{as } k\to +\infty.
\end{equation}
(for more details see section \ref{SUBSECTION: Collar Framework}).

\noindent
{\bf Remark.}
It turns out that the convergence to the geodesic segments is strong enough to have that $\Lambda^j< \infty$ if and only if ${\rm length}(\widetilde \gamma^{j,0}_\infty), \dots, {\rm length}(\widetilde \gamma^{j,M_j}_\infty) < \infty$ (see \cite{CLW}, \cite{Zhu10} and also Section \ref{SECTION: Collar Analysis} of this paper)

In the paper \cite{DGR25} the first and the second authors proved for harmonic maps the following result.

\begin{theorem}[Lemma V.1 in \cite{DGR25}]
\label{th-morse-stability-degold}
There exists a constant $\Lambda^\ast > 0$, depending only on $n$ and $\mathcal{N}$, such that if the $\Lambda^j$ are all less than $\Lambda^\ast$, then for all sufficiently large $k$ one has
\begin{equation}
\label{0.5-b}
\mathrm{Ind}^\ast_{{E}}(u_k) \le
\mathrm{Ind}^\ast_{{E}}(u_\infty)
+ \sum_{j=1}^Q \mathrm{Ind}^\ast_{{E}}(v_\infty^j)
+\sum_{j=1}^M\sum_{\ell=1}^{M_j} \mathrm{Ind}^\ast_{E}(\sigma_\infty^{j\ell}),
\end{equation}
where $\mathrm{Ind}^\ast_{{E}}(u_k)=\mathrm{Ind}_{{E}}(u_k)+\mathrm{Null}_{{E}}(u_k)$ is the extended Morse index consisting of the morse index plus nullity.
\end{theorem}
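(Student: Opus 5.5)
Following \cite{DGR25}, I would argue by contradiction via weighted eigenvalue problems, in four steps.

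\textbf{Step 1: setup.} Write the second variation of $E$ at $u_k$ as the quadratic form
\begin{equation}
Q_{u_k}(w)=\int_{\Sigma_k}|\nabla w|^2-\mathrm{Hess}(\nabla u_k,\nabla u_k)(w,w)\,dvol_{h_k},
\end{equation}
acting on sections $w$ of $u_k^{-1}T\mathcal N$. Suppose the inequality \eqref{0.5-b} fails. Then there are $N+1$ linearly independent sections on which $Q_{u_k}\le 0$, where $N$ is the right-hand side of \eqref{0.5-b}.

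\textbf{Step 2: change of weight.} Replace the volume form by a weight $\omega_k\,dvol_{h_k}$. The weight $\omega_k$ is chosen to equal $1$ on the thick part. On the bubble regions it is the conformal factor of the rescaled spheres. On the collar and neck regions, written in conformal polar coordinates $(r,\theta)$, it is $r^{-2}$ times a small-energy quantity, e.g. $\sum_i r^2|\nabla u_k|^2$ smoothed on dyadic annuli, plus a tiny polynomial correction $r^{-2}(r^{\beta}+(\delta_k/r)^{\beta})$.

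The extended index is the number of nonpositive eigenvalues of $\omega_k^{-1}\mathcal L_{u_k}$. Sylvester's law of inertia guarantees that this count is independent of the positive weight. So I get $N+1$ $L^2_{\omega_k}$-orthonormal eigenfunctions $w_k^i$ with eigenvalues $\lambda_k^i\le 0$ and $Q_{u_k}(w_k^i)\le 0$.

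\textbf{Step 3: limit passage.} The key estimate is that on the neck and collar regions the weighted Poincaré-type inequality
\begin{equation}
\int |\mathrm{Hess}(\nabla u_k,\nabla u_k)(w,w)|\le C\,\varepsilon\int |\nabla w|^2 +o(1)\int w^2\omega_k
\end{equation}
holds. This controls the negative part of $Q$ by the Dirichlet part, so eigenfunctions cannot concentrate in necks.

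In the bubble necks this uses the small energy of dyadic annuli, with the $L^{2,1}$–$L^{2,\infty}$ duality estimate $\|\nabla u_k\|_{L^{2,1}}\lesssim \varepsilon$ on necks. On the degenerating collars the $L^{2,1}$ norm of $\nabla u_k$ is no longer small but is controlled by $\Lambda^j$. The quantity $\|\nabla u_k\|_{L^{2,1}}$ of the radial part is comparable to $\int|\partial_r u_k|\,d\theta\,dr$, i.e. to $\Lambda^j$. Hence when $\Lambda^j<\Lambda^\ast$, the Hardy-type inequality
\begin{equation}
\int r^{-2}|w|^2\lesssim \int|\nabla w|^2
\end{equation}
for functions with vanishing angular mean fails. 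The radially constant mode, however, is handled by the one-dimensional estimate $\int|\partial_r u|^2|\bar w|^2\le(\int|\partial_ru|)^2\sup|\partial_r\bar w|^2$-type bounds, which give smallness $\Lambda^{\ast 2}$ against the 1D Dirichlet energy.

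Using these estimates, the eigenfunctions are bounded in $W^{1,2}$ with the weight. They converge strongly in $L^2_{\omega_k}$ on the base surface and on each bubble, $v_\infty^j$ and $\sigma_\infty^{j\ell}$, with no mass lost in necks or collars. The limits have nonpositive energy, and orthonormality passes to the limit.

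\textbf{Step 4: counting and the main obstacle.} The $N+1$ limits distribute as linearly independent nonpositive directions among $u_\infty$ (with removable singularities at punctures), the $v^j_\infty$ and the $\sigma_\infty^{j\ell}$. This forces more than $N$ of them, a contradiction.

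The main obstacle is the collar estimate. There the length of the geodesic image, not the energy, governs the zero-mode, and this is exactly where the smallness threshold $\Lambda^\ast$ depending on $\mathcal N$ (through $\|\mathrm{Riem}\|_\infty$) enters.
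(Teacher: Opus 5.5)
Your overall architecture is the one of \cite{DGR25}: weighted diagonalization, Sylvester's law, compactness of $L^2_\omega$-normalized nonpositive eigenfunctions, exclusion of weighted mass from necks and collars, and a dimension count. The gap is in the collar step, which is exactly where $\Lambda^j<\Lambda^\ast$ is used; as written it does not work.

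\textbf{Length and small energy do not give coercivity.} The average length $\Lambda$ is an $L^1$ quantity. Together with small collar energy, it does not control the potential term on the radial mode. On $[0,T]\times S^1$, let $|\partial_t u|=a$ on a middle subcylinder of length $L$ and $0$ elsewhere. Then
\begin{itemize}
\item the length is $\Lambda=aL$ and the energy is $2\pi\Lambda^2/L$;
\item for $\bar w(t)=\min(t,T-t)$, one has $\int|\partial_t u|^2|\bar w|^2\,dt\approx \Lambda^2T^2/(4L)$ against $\int|\partial_t\bar w|^2\,dt=T$.
\end{itemize}
The ratio $\Lambda^2T/(4L)$ blows up when $L\ll T$, however small $\Lambda$ is.

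Your inequality $\int|\partial_r u|^2|\bar w|^2\le(\int|\partial_r u|)^2\sup|\partial_r\bar w|^2$ cannot repair this. It is false when $\partial_r u$ concentrates near the maximum of $\bar w$. In any case $\sup|\partial_r\bar w|$ is not controlled by the one-dimensional Dirichlet energy.

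Three related statements are also off:
\begin{itemize}
\item The Hardy inequality for functions of zero angular mean holds unconditionally, by Wirtinger on each circle. It is the zero mode that needs $\Lambda$ small.
\item Only $\Lambda\lesssim\|\nabla u_k\|_{L^{2,1}}$ is free (Lorentz--H\"older). The converse does not follow from length and energy alone.
\item A weight built from a smoothed $|\nabla u_k|^2$ makes $\mu_{\eta,k}$ bounded for free, but it moves the whole difficulty into the Poincar\'e inequality you would then need for that weight.
\end{itemize}

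\textbf{The missing ingredient} is the harmonic-map pointwise estimate of \cite{DGR25} (Lemma V.1 there, recalled in Section 2):
\[
|x|^2|\nabla u_k^\sharp(x)|^2\le C\,\|\nabla u_k^\sharp\|_{L^2(A(\eta,\delta_k))}\Big[\Big(\frac{|x|}{\eta}\Big)^\beta+\Big(\frac{\delta_k}{|x|\eta}\Big)^\beta\Big]+C\,\frac{\Lambda^2}{T_{\eta,k}^2}.
\]
Up to terms decaying exponentially away from the ends, it says the gradient is spread uniformly along the collar at size $\Lambda/T_{\eta,k}$. This is where the harmonic-map structure enters: the $\Lambda^2/T_{\eta,k}^2$ term comes from the $\log|z|$ component of the harmonic part.

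It must be paired with two further pieces:
\begin{itemize}
\item the specific weight $\omega^\sharp_{\eta,k}=|x|^{-2}[(|x|/\eta)^\beta+(\delta_k/(|x|\eta))^\beta+T_{\eta,k}^{-2}]$, whose $T_{\eta,k}^{-2}$ term is exactly what the radial mode can afford;
\item the Poincar\'e inequality $\int_{A(\eta,\delta_k)}|\nabla f|^2\ge c_0\int_{A(\eta,\delta_k)}|f|^2\omega^\sharp_{\eta,k}$ for $f$ vanishing on both boundary circles (Lemma IV.1 of \cite{DGR25}).
\end{itemize}

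Combining these gives $c_0\,\omega^\sharp_{\eta,k}-C|\nabla u_k^\sharp|^2\ge(c_0-C'\Lambda^2-o(1))\,\omega^\sharp_{\eta,k}$ on the whole collar. So $\Lambda^\ast$ is defined by $C'(\Lambda^\ast)^2<c_0$, and then $Q_{u_k}(\xi)\ge\bar\kappa\int|\xi|^2\omega_{\eta,k}$ for every $\xi$ supported in the collar.

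A cut-off argument as in Lemma \ref{LEMMA: On the non zero of the lim functs} then shows that no weighted mass of the eigenfunctions is lost in the collar. Here positivity holds on all of $A(\eta,\delta_k)$, not only on the connecting necks. With that established, your Step 4 goes through.
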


It is a classical result that, below a critical length depending on the
geometry of the target, geodesic segments are stable for fixed-endpoint
variations; equivalently, the associated index form has no negative
directions (see, for instance, \cite{Mil63}). In the degeneration setting
of Theorem~\ref{th-morse-stability-degold}, this implies that a collar
whose limiting geodesic has length strictly smaller than the critical
threshold $\Lambda^\ast$ cannot contribute to the negative part of the
spectrum.

This observation explains the additional assumption
\begin{equation}
    \Lambda^j<\Lambda^\ast
    \qquad \text{for every } j
\end{equation}
in Theorem~\ref{th-morse-stability-degold}. Under this smallness condition,
all possible negative directions for $u_k$ are forced, in the limit, to
concentrate either on the limiting map or on one of the bubbles; no
negative direction can arise from the degenerating collars.

The aim of the present paper is to remove the smallness assumption
$\Lambda^j<\Lambda^\ast$. We prove that the index stability statement
remains valid under the sole hypothesis
\begin{equation}
    \Lambda^j<+\infty ,
\end{equation}
allowing the collars to converge to geodesics of arbitrary finite length,
possibly beyond the classical stability threshold. In this regime the
collars may themselves carry unstable directions, and their contribution
has to be incorporated into the limiting index formula.

The main result is the following.

\begin{theorem}
\label{th-morse-stability-degnew}
Assume that the limiting average lengths $\Lambda^j$ are finite. Then, for
all sufficiently large $k$,
\begin{eqnarray}
\label{0.5-b-new}
\mathrm{Ind}^\ast_{E}(u_k) &\le&
\mathrm{Ind}^\ast_{E}(u_\infty)
+\sum_{j=1}^Q  \mathrm{Ind}^\ast_{E}(v_\infty^j)
+\sum_{j=1}^M\sum_{\ell=1}^{M_j} \mathrm{Ind}^\ast_{E}(\sigma_\infty^{j\ell})
+\sum_{j=1}^M\sum_{\ell=0}^{M_j}  \mathrm{Ind}^\ast_{E}(\widetilde \gamma_\infty^{j\ell}),
\end{eqnarray}
and
\begin{eqnarray}
\label{0.6-b}
\mathrm{Ind}_{E}(u_k)
&\ge&
\mathrm{Ind}_{E}(u_\infty)
+\sum_{j=1}^Q \mathrm{Ind}_{E}(v_\infty^j)
+\sum_{j=1}^M\sum_{\ell=1}^{M_j} \mathrm{Ind}_{E}(\sigma_\infty^{j\ell})
+\sum_{j=1}^M\sum_{\ell=0}^{M_j} \mathrm{Ind}_{E}(\widetilde \gamma_\infty^{j\ell}).
\end{eqnarray}
\hfill $\Box$
\end{theorem}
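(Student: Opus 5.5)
The upper bound \eqref{0.5-b-new} will follow the weighted-operator strategy of \cite{DGR25}, with the collar analysis refined so that the collars are no longer required to be spectrally negligible. The idea is to replace the Jacobi operator $L_{u_k}$ by a weighted operator $\omega_k^{-1}L_{u_k}$. The weight $\omega_k$ is chosen so that:
\begin{itemize}
\item it equals $1$ on the thick part;
\item it behaves like $|x|^{-2}$ (suitably rescaled) on the bubble necks;
\item on each collar $[T^{1,\varrho}_{k,j},T^{2,\varrho}_{k,j}]\times S^1$, written in cylindrical coordinates $(t,\theta)$, it is comparable to $|\partial_t u_k|^2$ averaged in $\theta$, plus a small correction.
\end{itemize}
Sylvester's law of inertia shows that the number of nonpositive eigenvalues is unchanged by this weighting. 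The proof then proceeds in three steps.

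\textbf{Step 1 (concentration of eigenfunctions).} Suppose $\mathrm{Ind}^\ast_E(u_k)$ exceeded the right-hand side for a subsequence. Take an $L^2_{\omega_k}$-orthonormal family of eigenfunctions of $\omega_k^{-1}L_{u_k}$ with nonpositive eigenvalues. Using the $\varepsilon$-regularity estimates on the collars (Proposition \ref{congeo} and the convergence \eqref{EQ: JINhd2njkd21JNDs82fn28u4fhn2f}), show that these eigenfunctions are uniformly bounded in the weighted $W^{1,2}$ norm. Pass to limits on each region: the thick part, the bubbles, and the geodesic necks. Each eigenfunction then yields a limit triple consisting of a section along $u_\infty$, sections along the bubbles $v^j_\infty$ and $\sigma^{j\ell}_\infty$, and Jacobi fields along the segments $\widetilde\gamma^{j\ell}_\infty$.

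\textbf{Step 2 (the collars converge to the 1D index form).} This is the genuinely new part and the main obstacle. Reparametrize each neck region $[b^\ell_{k,j},a^{\ell+1}_{k,j}]\times S^1$ by the arclength $s$ of the $\theta$-averaged curve. Split an eigenfunction $w_k$ into its $\theta$-average $\bar w_k(s)$ and the oscillating remainder $w_k-\bar w_k$. For the oscillating part, the Poincaré inequality on $S^1$ gives the term $\int|\partial_\theta w|^2\geq\int|w-\bar w|^2$. Since $|\nabla u_k|^2$ is small pointwise on the necks (no energy concentrates there), this term dominates the curvature term $\langle R(\nabla u_k,w)\nabla u_k,w\rangle$. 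Hence nonpositive modes cannot carry oscillating mass in the limit, and this estimate needs no smallness of $\Lambda^j$.

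For the averaged part, show that the quadratic form restricted to $\bar w_k$ converges, after the change of variable $t\mapsto s$, to the geodesic index form
\[
\int_0^{\ell}\bigl(|\nabla_s V|^2-\langle R(\dot\gamma,V)\dot\gamma,V\rangle\bigr)\,ds
\]
on $\widetilde\gamma^{j\ell}_\infty$. The delicate point is controlling the angular part $|\partial_\theta u_k|$ relative to $|\partial_t u_k|$. This should follow from the vanishing of the Hopf differential on long cylinders, which gives $|\partial_t u|^2-|\partial_\theta u|^2\approx$ const with a constant exponentially small in the neck length. I would first try to prove that the weighted norms do not lose mass at the junctions between necks and bubbles. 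This would use the three-annuli/decay estimates, where the finiteness of $\Lambda^j$ guarantees that the weight is integrable in $s$. Once this holds, the boundary behaviour of the limiting Jacobi fields matches the values of the bubble or base sections at the nodes.

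\textbf{Step 3 (conclusion).} The limits obtained in Step 1 remain linearly independent, because no $\omega_k$-mass is lost in the limit. The limiting quadratic form is nonpositive on them and decomposes as a sum over the base map, the bubbles and the geodesic segments. Consequently, the dimension of this family is at most the sum of the extended indices on the right-hand side of \eqref{0.5-b-new}, which gives the upper bound.

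\textbf{Lower bound \eqref{0.6-b}.} Proceed by a cut-off construction, as in the appendix of \cite{DRS}. Take negative directions for each limiting object. For the base map and the bubbles, cut off with logarithmic cut-offs so that they have disjoint supports. For each geodesic segment, take a negative variation $V$ with compact support in the interior. Transplant it to the collar as $V(s(t))$, using parallel transport along $u_k$; it is constant in $\theta$. The convergence from Step 2 shows that the second variation of each transplanted direction converges to its limiting negative value. The directions coming from different objects have disjoint supports, so they span a negative-definite subspace of the required dimension for all large $k$.
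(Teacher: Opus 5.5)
Your architecture (weighted Jacobi operator, Sylvester's law, compactness of the nonpositive eigenfunctions, a separately rescaled limit on the collar, a dimension count) is the paper's. However, the step you flag as delicate is not closed by the tool you propose. After the rescaling $s=(\log\eta-\log r)/T_{\eta,k}$, $\overline w_k=w_k/\sqrt{T_{\eta,k}}$, the collar part of $Q_{u_k}$ contains $T_{\eta,k}^2\,\mathbb A_{\widetilde u_k}(\partial_\theta\widetilde u_k,\partial_\theta\widetilde u_k)\cdot\mathbb A_{\widetilde u_k}(\overline w_k,\overline w_k)$. So you need $|\partial_\theta u_k|=o(T_{\eta,k}^{-1})$ in the middle of the collar. $C^1$-convergence to the geodesic and vanishing of the neck energy give no rate, and the pointwise bound of Lemma V.1 in \cite{DGR25} only yields $T_{\eta,k}^2|\partial_\theta\widetilde u_k|^2\lesssim\Lambda^2$. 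The Hopf differential does not help. Its zero Fourier mode on the cylinder is not exponentially small but of order $\mathrm{length}(\widetilde\gamma_\infty)^2/T_{\eta,k}^2$; this is how the length formula of \cite{CLW} arises, and if it were exponentially small, $\widetilde\gamma_\infty$ would be a point. Moreover, knowing $|\partial_t u|^2-|\partial_\theta u|^2$ and $\partial_t u\cdot\partial_\theta u$ up to small errors does not bound $|\partial_\theta u|$ at all. The missing idea is Lemma~\ref{LEMMA: angular bd improved}. In the decomposition $A_k\nabla u_k=\nabla\varphi_k+\nabla^\perp\psi_k+\nabla\mathfrak h_k$ of \cite{DGR25}, the $\Lambda^2/T_{\eta,k}^2$ contribution comes only from the $\theta$-independent piece $C^0_{\eta,k}\log|z|$, which $\partial_\theta$ kills. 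Combined with the angular $\varepsilon$-regularity of Lemma~\ref{LEMMA: Angular epsilon-regulrity}, this gives $|\partial_\theta u_k^\sharp|^2\le C[(|x|/\eta)^\beta+(\delta_k/(|x|\eta))^\beta]\|\nabla u_k^\sharp\|_{L^2(A(\eta,\delta_k))}^2$, hence $T_{\eta,k}^\alpha\|\partial_\theta\widetilde u_k\|_{L^\infty([\sigma,1-\sigma]\times S^1)}\to0$ for every $\alpha$ (Proposition~\ref{estdertheta}). Your lower-bound transplant needs the same estimate to get $T_{\eta,k}Q_{u_k}(P_{u_k}V(s))\to2\pi Q_{\widetilde\gamma_\infty}(V)$. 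Note that the paper does not prove \eqref{0.6-b} here; it defers it to a cut-off argument as in \cite{DRS,Schl}, which is the kind of construction you sketch.

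The second gap is at the junctions. Poincar\'e on $S^1$ only controls the nonzero $\theta$-modes. The real danger is that nonpositive eigenfunctions keep weighted mass, in the $\theta$-independent mode, in the transition annuli $B_\eta\setminus\bar B_{\varepsilon_k(\sigma)\eta}$ and $B_{\delta_k/(\varepsilon_k(\sigma)\eta)}\setminus\bar B_{\delta_k/\eta}$, which neither the thick-part limit nor the rescaled geodesic limit sees. Your plan (three-annuli decay, integrability of the weight in $s$) does not say what excludes this. In the paper it is the $L^{2,1}$-quantization of Lemma~\ref{LEMMA: L21 quant is conn necks}: on these annuli $\|\nabla u_k^\sharp\|_{L^{2,1}}\lesssim\Lambda\sigma+o(1)$, i.e.\ their images are geodesic pieces of length $O(\Lambda\sigma)$. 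With Lemma~\ref{LEMMA: PTS bd of na u in conn regions}, this gives $C|\nabla u_k|^2\le\tfrac12 c_0\,\mathfrak w_{\sigma,\eta,k}$ there once $\sigma$ is small depending on $\Lambda$ ($c_0$ the constant of the weighted Poincar\'e inequality of \cite{DGR25}). Hence the coercivity of Theorem~\ref{THEOREM: Pos in conn necks} holds: short geodesic pieces are stable, and this is where $\Lambda<\infty$ is really used. A cut-off argument then shows that at least one of the two limits is nonzero (Lemma~\ref{LEMMA: On the non zero of the lim functs}), which is all the count needs; absence of mass loss is not required. Finally, your remark that the limiting Jacobi fields match the base sections at the nodes contradicts your Step 3. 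Use the weight $\omega_{\eta,k}$, whose $T_{\eta,k}^{-2}$ floor gives $T_{\eta,k}^2\widetilde\omega_{\eta,k}\to1$ in the middle; a weight modelled on $|\partial_t u_k|^2$ needs this floor, and arclength is no parametrization when $\widetilde\gamma_\infty$ is a point. Then collar-concentrated eigenfunctions live at scale $\sqrt{T_{\eta,k}}$, so $w_k/\sqrt{T_{\eta,k}}$ has vanishing trace at $s=0,1$ and the geodesic limit decouples with Dirichlet conditions. That is exactly why the fixed-endpoint quantity $\mathrm{Ind}^\ast_E(\widetilde\gamma_\infty)$ appears; a limit coupled to the base section would not lie in $V_{\widetilde\gamma_\infty}$.
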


The distinctive feature of Theorem~\ref{th-morse-stability-degnew} is the
explicit appearance of the geodesic limits
$\widetilde\gamma_\infty^{j\ell}$ associated with the degenerating
collars. Thus the limiting index is no longer described only by the base
map and the bubbles, as in the usual bubble-tree analysis. The collars may
retain genuine negative directions, and these are measured by the Morse
indices of the corresponding limiting geodesics. In this sense, the
geodesic necks form a third variational component of the degeneration,
alongside the limiting map and the harmonic spheres. To the best of our
knowledge, this is the first index-stability result for harmonic maps from
degenerating Riemann surfaces in which this geodesic contribution is
identified and quantified.\medskip
\noindent

The idea of the proof of Theorem \ref{th-morse-stability-degnew} is to apply the improved pointwise estimate (on annuli) of the gradient obtained in Lemma V.1 of \cite{DGR25} to give an independent proof of the following result which was previously established in \cite{CLW}, \cite{Zhu10}:

\begin{theorem}
[\cite{CLW}, \cite{Zhu10}, see Proposition \ref{congeo}]
\label{THEOREM: Conv to geod general}
After reparametrization, the convergence of the maps $u_k$ to the geodesic segments (in the according domains) occurs in the $C^1$-topology on compact subsets of the open cylindrical domains. 
\end{theorem}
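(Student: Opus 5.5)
Work on one neck region, i.e. a long cylinder $[b^\ell_{k,j},a^{\ell+1}_{k,j}]\times S^1$ (or an end piece) in conformal coordinates $(t,\theta)$ with $t=-\log r$, on which no energy concentrates: by construction of the bubble tree, $\sup_{[t,t+1]\times S^1}\int|\nabla u_k|^2\to 0$ uniformly for $t$ in the neck, away from its ends. The plan is to split $u_k$ into its $\theta$-average $\bar u_k(t)=\frac1{2\pi}\int_0^{2\pi}u_k(t,\theta)\,d\theta$ and its oscillating part $u_k-\bar u_k$. Then show three things: the oscillation is negligible in $C^1$; the averaged curve, reparametrized by arclength, is an almost-geodesic; and compactness for the ODE yields $C^1$ convergence.

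\emph{Step 1 (angular part).} Use the improved pointwise gradient estimate on annuli from Lemma V.1 of \cite{DGR25}. In the $t$-variable it gives, on the neck, a bound of the form
\[
|\partial_\theta u_k|(t,\theta)\le C\,\varepsilon_k(t)\bigl(e^{-(t-t_1)}+e^{-(t_2-t)}\bigr)+o(1)\,|\partial_t u_k|(t,\cdot),
\]
together with $\|\partial_t u_k(t,\cdot)-\overline{\partial_t u_k}(t)\|_{L^\infty}=o(1)\,|\overline{\partial_t u_k}(t)|+(\text{exp.\ decaying terms})$. I would derive these from the Pohozaev identity, which says $\int_{\{t\}\times S^1}(|\partial_tu_k|^2-|\partial_\theta u_k|^2)\,d\theta$ is constant in $t$, combined with the $\varepsilon$-regularity/three-annuli decay of the non-radial Fourier modes. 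As a consequence, $\nabla u_k$ is, up to errors small relative to $|\partial_tu_k|$, the radial derivative of a function of $t$ alone.

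\emph{Step 2 (reparametrization).} Set $s_k(t)=\int_{t_1}^{t}\frac1{2\pi}\int_0^{2\pi}|\partial_tu_k|\,d\theta\,dt'$. The total length $s_k(t_2)$ is bounded, since $\Lambda^j<\infty$. Define $\gamma_k(s)=u_k(t(s),\theta)$, or the average projected to $\mathcal N$. Writing the harmonic map equation $\partial_t^2u+\partial_\theta^2u+\mathbb A_u(\nabla u,\nabla u)=0$ in the variable $s$, and integrating out $\theta$, gives
\[
\gamma_k''+\mathbb A_{\gamma_k}(\gamma_k',\gamma_k')=\text{error}_k,
\]
where the error involves $\partial_\theta u_k$, the oscillation of $\partial_tu_k$, and the tangential term $(\partial_t^2s_k)\gamma_k'$. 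The last term is controlled because the Pohozaev constant forces $|\partial_tu_k|$ to be nearly constant in $t$: a nonzero constant gives a uniform speed, and a zero constant (the length-zero case) is handled separately.

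\emph{Step 3 (conclusion).} With $|\gamma_k'|\approx1$ and the error tending to $0$ uniformly on compact subintervals, Arzel\`a--Ascoli and the ODE yield $\gamma_k\to\widetilde\gamma^{j\ell}_\infty$ in $C^1_{loc}$ with $\widetilde\gamma^{j\ell}_\infty$ a geodesic. Step 1 transfers this to $u_k$ itself on compact subsets of the open cylinder.

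The main obstacle is Step 1 in the regime where the Pohozaev constant is tiny. There $|\partial_tu_k|$ may be comparable to the exponentially decaying boundary contributions, so relative smallness fails near the ends. I would first cut off a fixed fraction of length near each end, which corresponds to "compact subsets of the open domain", and then show that the exponential terms are $o(1)$ times the average speed in the interior after normalizing by arclength.
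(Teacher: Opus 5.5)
Your route is genuinely different from the paper's. The paper uses no Hopf differential, no $\theta$-averages and no arclength. It rescales the collar affinely, $s=(t-t_1)/T_{\eta,k}$ with $t_1=\log\eta^{-1}$, and proves uniform $C^2$ bounds for $\widetilde u_k$ on $[\sigma,1-\sigma]\times S^1$. The new input is Lemma~\ref{LEMMA: angular bd improved}, an exponential bound on $\partial_\theta u_k$ with \emph{no} $1/T_{\eta,k}$ term. It comes from the $\theta$-independence of the $\log|z|$ part $\mathfrak h_k^0$ in the decomposition of \cite{DGR25}, together with Lemma~\ref{LEMMA: Angular epsilon-regulrity}. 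With Lemma~\ref{lemmabbh} this gives decay faster than any power of $T_{\eta,k}$ of $\partial_\theta\widetilde u_k$, $\partial_\theta^2\widetilde u_k$ and $\partial_s\partial_\theta\widetilde u_k$ in the interior. Next, \eqref{EQ: jUINDUQWn2u323fnu9inufnujf3UJLCNS91edq} gives $|\partial_s\widetilde u_k|\le C\Lambda$, and the rescaled equation gives $|\partial_s^2\widetilde u_k|\le C\Lambda^2$. Arzel\`a--Ascoli and passing to the limit in that equation then produce a constant-speed, possibly constant, geodesic.

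Your Pohozaev/$\theta$-average/arclength scheme is closer to the Hopf-differential argument of \cite{CLW}. It buys a unit-speed parametrization and the length formula: length $\approx T_{\eta,k}\sqrt{c_k/2\pi}$, where $c_k=\int_{\{t\}\times S^1}(|\partial_tu_k|^2-|\partial_\theta u_k|^2)\,d\theta$. The paper's scheme buys uniformity, with no case distinction, and the quantitative bounds (e.g.\ $T_{\eta,k}^2|\partial_\theta\widetilde u_k|^2\to0$) that are reused in the index analysis.

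As written, though, your proposal has two points that do not go through.

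First, Step~1 does not follow from Lemma~V.1 of \cite{DGR25}. In cylinder coordinates, \eqref{EQ: jUINDUQWn2u323fnu9inufnujf3UJLCNS91edq} bounds $|\nabla u_k|^2$ by exponentially decaying terms plus $C\Lambda^2/T_{\eta,k}^2$. In the interior the latter is of the same order as $|\partial_tu_k|^2$, so you only get $|\partial_\theta u_k|=O(|\partial_tu_k|)$, not $o(|\partial_tu_k|)$. What you need is an angular bound with no $1/T_{\eta,k}$ term at all. Three-annuli decay for the $\theta$-mean-zero field $w=\partial_\theta u_k$, which solves the linear system \eqref{eq:w-eqn} with coefficients of size $|\nabla u_k|$, would give it; the Pohozaev identity plays no role there.

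Controlling the $\theta$-oscillation of $\partial_tu_k$ and the tangential term $\partial_t^2 s_k$ then requires second-order angular bounds as in Lemma~\ref{LEMMA: Higher Order cont in the collars of u}. For instance, use $\partial_t|\partial_tu_k|^2=-2\,\partial_tu_k\cdot\partial_\theta^2u_k$, which holds because $\Delta u_k\perp T_{u_k}\mathcal N$. A $C^0$ statement that the speed is nearly constant does not control $\partial_t^2s_k$.

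Second, your ``main obstacle'' is created by the arclength normalization, and your proposed fix fails in exactly the degenerate case. The relevant dichotomy is whether $T_{\eta,k}^2c_k$ stays away from $0$ (positive limiting length), not whether $c_k=0$. If $T_{\eta,k}^2c_k\to0$, e.g.\ $c_k$ exponentially small in $T_{\eta,k}$, the exponential terms on $[t_1+\sigma T_{\eta,k},t_2-\sigma T_{\eta,k}]$ need not be $o(1)$ times the speed, and the arclength reparametrizations have no meaningful limit. In that case you must show directly that the image collapses to a point. This is immediate in the affine variable, since there $T_{\eta,k}^2|\partial_tu_k|^2\lesssim T_{\eta,k}^2|c_k|+o(1)$. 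Working in the affine variable from the start, as the paper does, removes this case distinction entirely.
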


The estimates derived in the proof of Theorem \ref{THEOREM: Conv to geod general} will then be used to perform the collar analysis of the Morse index.
The strategy of the proof consists in taking a suitable sequence of non-positive variations of the harmonic maps $u_k$ and prove  that these asymptotically produce non-positive variations of at least one of the limiting objects $u_\infty$, $v^j_\infty$, $\sigma_\infty^{j\ell}$, $\widetilde \gamma_\infty^{j\ell}$.
In this context "suitably" is meant in the following way:
Sylvester's law of inertia allows us the freedom of rescaling variations by a positive weight. This permits to find meaningful limiting variations.
We then proceed to show that the junction regions connecting different scales of convergence do not contribute to the negativity of the second variation. 

\medskip

\begin{remark}
\label{REMARK: Converse without positive Ricci}
The finiteness assumption on the limiting lengths is also related to a
partial converse to Theorem~\ref{th-morse-stability-degnew}. In
\cite{LLW17}, it is shown that for a sequence of harmonic maps
\begin{equation}
    u_k:(\Sigma_k,h_k)\to (\mathcal N,g)
\end{equation}
from degenerating Riemann surfaces, the assumptions
\begin{equation}
    \operatorname{Ric}_{\mathcal N}\geq \lambda>0,\qquad
    \sup_k E(u_k)<+\infty,\qquad
    \sup_k \operatorname{Ind}_{E}(u_k)<+\infty
\end{equation}
imply that
$$
    \Lambda^j<+\infty
    \qquad \text{for every } j.
$$
Thus, under a positive Ricci curvature assumption, bounded energy together
with bounded Morse index prevents the collars from producing geodesic
limits of infinite length. This curvature assumption is not merely
technical: Appendix~\ref{APPENDIX: Flat torus counterexample} shows that,
without such a positive lower bound, bounded index alone does not in
general control the limiting lengths, by constructing a sequence of
harmonic maps from degenerating flat tori such that
$$
    \operatorname{Ind}_E^\ast(u_k)=2
    \qquad \text{for all } k,~~\text{whereas}  ~~ \Lambda^1=+\infty ~~\Box
$$
\end{remark}

\medskip

We recall that the second derivative of $E$ at a harmonic map $u$ is defined on the space of infinitesimal variations of $u$, which are nothing but the sections of the pull-back by $u$ of the tangent bundle to $\mathcal{N}$:
\begin{equation}
\label{0.2-a}
V_u
:=
\Gamma(u^{-1}T\mathcal{N})
:=
\left\{
w\in W^{1,2}(\Sigma, \mathbb{R}^m)
\; ; \;
P_{u(x)}w=w \mbox{ for a.e. } x\in \Sigma
\right\},
\end{equation}
where for all $z\in \mathcal{N}$, $P_z$ is the $m\times m$ matrix corresponding to the orthogonal projection from $\mathbb{R}^m$ onto $T_z\mathcal{N}$.

The second derivative of $E$ at $u$ is a quadratic form on $V_u$ given by
\begin{equation}
\label{0.3}
Q_u(w)=D^2E_u(w)=\int_\Sigma |dw|_h^2\, d\mathrm{vol}_h-\int_\Sigma \left\langle {\mathbb A}_u(du,du)_h , {\mathbb A}_u(w,w)\right\rangle\, d\mathrm{vol}_h,
\end{equation}
where $\langle\cdot,\cdot\rangle$ denotes the scalar product in $\mathbb{R}^m$, ${\mathbb A}_z(X,Y)$ is the second fundamental form of the embedding $\mathcal{N}\hookrightarrow \mathbb{R}^m$ at the point $z\in \mathcal{N}$, taken over a pair of vectors $X,Y\in T_z\mathcal{N}$. 
The nonlinearity ${\mathbb A}_u(du,du)_h$ is equal to $\mathrm{Tr}_h({\mathbb A}_u(du\otimes du))$. Using local coordinates on $\Sigma$ it is given by
\begin{equation}
{\mathbb A}_u(du,du)_h:=\sum_{i,j=1}^2 h^{ij}\,{\mathbb A}_u(\partial_{x_i}u,\partial_{x_j}u).
\end{equation}
In the sphere case when $\mathcal{N}=S^n\subset \mathbb{R}^{n+1}$ the computation gives for instance
\begin{equation}
\label{0.3-a}
Q_u(w)=D^2E_u(w)=\int_\Sigma |dw|^2_h- |du|^2_h\,|w|^2\, d\mathrm{vol}_h.
\end{equation}
We define the \textit{Morse index} of $u$ to be
\begin{equation}
\mathrm{Ind}_{E}(u):=
\mathrm{Ind}(u):=
\sup\left\{
\dim(W)
\; ; \;
W \mbox{ is a vector subspace of }V_u
\mbox{ such that }
Q_u|_{W\setminus\{0\}}<0
\right\},
\end{equation}
and the \textit{nullity}
\begin{equation}
\mathrm{Null}_{E}(u):=\dim\bigl(\mathrm{Ker}(Q_u)\bigr).
\end{equation}

Let
$
    \gamma:[a,b]\to \mathcal N
$
be a geodesic segment parametrized with constant speed. Then $\gamma$ is
a critical point of the energy functional
\begin{equation}
    \widetilde E(\gamma)
    =
    \int_a^b |\dot\gamma(t)|_h^2\,dt
\end{equation}
with respect to variations whose endpoints are fixed.

The admissible infinitesimal variations are $W^{1,2}$-sections of the
pull-back bundle $\gamma^{-1}T\mathcal N$ which vanish at the endpoints.
We denote this space by
\begin{equation}
\label{EQ: geod var 0239ujr233}
    V_\gamma
    :=
    W^{1,2}_0\bigl([a,b],\gamma^{-1}T\mathcal N\bigr).
\end{equation}
Equivalently, after an isometric embedding
$\mathcal N\hookrightarrow \mathbb R^m$, one may write
\begin{equation}
\label{EQ: geod var embedded 0239ujr233}
    V_\gamma
    =
    \left\{
    \mathsf v\in W^{1,2}_0([a,b];\mathbb R^m)
    :
    \mathsf v(t)\in T_{\gamma(t)}\mathcal N
    \ \text{for a.e. } t\in[a,b]
    \right\}.
\end{equation}

\par
The second variation is given by $Q_\gamma:V_\gamma \rightarrow\mathbb{R}$,
\begin{equation}
\begin{aligned}
Q_\gamma(\mathsf{v}) 
= \int_a^b \left(|\dot{\mathsf{v}}|^2 - {\mathbb A}_\gamma(\dot \gamma, \dot \gamma) \cdot {\mathbb A}_\gamma(\mathsf{v},\mathsf{v}) \right)\, dt.
\end{aligned}
\end{equation}
The \underline{Morse index} of $\gamma$ is given by
\begin{equation}
{\operatorname{Ind}}_{\widetilde{E}}(\gamma):=
{\operatorname{Ind}}(\gamma):=\max \left\{\dim(W) ; W \text{ is a vector subspace of } V_\gamma \text{ such that } Q_\gamma|_{W\setminus\{0\}}<0\right\},
\end{equation}
and the \underline{nullity} of $\gamma$ by
\begin{equation}
{\operatorname{Null}}_{\widetilde{E}}(\gamma):= \dim(\ker Q_\gamma).
\end{equation}

\noindent
\begin{remark}[Simplifying assumptions and collar bubbles]
\label{REMARK: Reduction to concentration-free collars}
In order to isolate the main mechanism of the proof, we shall first work
under two simplifying assumptions. First, we assume that there is a single
degenerating collar, namely $J=1$, and we omit the corresponding index
$j$. Second, we assume that the sequence $u_k$ does not develop bubbles
either away from or inside this collar. Thus, after passing to the
Deligne--Mumford limit and using the identifications given by $\tau_k$, we
suppose that
\begin{equation}
\label{EQ: JINhd2njkd21JNDs82fn28u4fhn2f}
    u_k\circ \tau_k^{-1}
    \longrightarrow u_\infty
    \qquad
    \text{strongly in } C^1_{\mathrm{loc}}
    \bigl(
        \overline{\Sigma}\setminus\{\xi_1,\xi_2\}
    \bigr).
\end{equation}
Equivalently, away from the degenerating geodesic $\gamma_k$, the maps
$u_k$ converge locally in $C^1$, after the above identifications, to a
limiting harmonic map $u_\infty$ defined on the punctured limiting surface
\begin{equation}
    \overline{\Sigma}\setminus\{\xi_1,\xi_2\}.
\end{equation}

These assumptions are made only to avoid obscuring the new feature of the
argument with the additional notation required by the full bubble-tree
decomposition. The essential point of the present paper is the contribution
of the limiting geodesic segments, arising from the degenerating collars, to
the Morse index.

In Appendix~\ref{APPENDIX: Reduction to concentration-free collars}, we
explain how the argument has to be modified in the simplest nontrivial case
where a bubble forms at one concentration point inside a collar. The collar
is then decomposed into a bubble region, adjacent geodesic regions, and
concentration-free transition annuli between the different scales. On these
transition annuli, the boundary values of the corresponding weights can be
adjusted so as to glue the geodesic weights to the bubble-tree weight
introduced in \cite{DGR25}.

The appendix is meant to describe the main gluing mechanism rather than to
give the full set of estimates needed for arbitrary bubble configurations.
A complete treatment of the case of several bubbles, both inside and outside
the collars, together with the compatibility of the weights across the
different bubble and geodesic scales, will be carried out in the forthcoming
paper \cite{DMRS}.
\end{remark}
Henceforward, we will assume that the limiting average length of $u_k$ in the collars are finite, i.e. $\Lambda < \infty$.
Then, in the absence of concentration in the collar, the finiteness of $\Lambda$ implies the energy quantization established by different methods in \cite{DGR25,Zhu10}:
\begin{equation}
\label{EQ: oINijnfj1093jn131dS12d}
\lim _{\eta \rightarrow 0} \limsup _{k \rightarrow\infty} \norm{\nabla u_k}_{L^2(A(\eta,\delta_k))}=0.
\end{equation}

We recall that the $L^{2,1}$ quantization of sequences of harmonic maps, established first by Laurain and Rivi\`ere in \cite{LR14}, is crucial to obtain the so-called \textit{$C^0$-no-neck property}, or \emph{necklessness property}, which says roughly speaking that there is no distance between the bubbles $v^j_\infty$, $\sigma_{\infty}^{\ell,j}$ and the main part $u_\infty$, namely, the bubbles $v_\infty^j$, $\sigma_{\infty}^{\ell,j}$ are ``$L^\infty$-glued'' directly to the weak limit $u_\infty$.\footnote{We recall that for a given domain $\mathbb{D}$,
\begin{equation}
L^{2,\infty}(\mathbb{D})=
\left\{
f \text{ measurable} :
\sup_{\lambda\ge 0}\lambda \,
\bigl|\{x\in \mathbb{D}: |f(x)|\ge\lambda\}\bigr|^{1/2}<+\infty
\right\},
\end{equation}
and
\begin{equation}
L^{2,1}(\mathbb{D})=
\left\{
f \text{ measurable} :
2\int_{0}^{+\infty}
\bigl|\{x\in\mathbb{D}: |f(x)|\ge\lambda\}\bigr|^{1/2}
\, d\lambda <+\infty
\right\}.
\end{equation}
}
The energy identity and the necklessness property do not hold in general for harmonic maps from degenerating Riemann surfaces (see e.g. \cite{CLW,Par96,Zhu10}).

In the paper \cite{CLW}, following the ideas in \cite{Zhu10}, the authors re-prove that the necks of the above $u_k$ from long cylinders consist of geodesics in the target manifold, and then give the length formula of these geodesics in the case where there are no bubbles. The proof in \cite{CLW} strongly relies on the analysis of the Hopf coefficient.

\par
We would like to mention that in a forthcoming paper \cite{DMRS}, we investigate the improved stability of the extended Morse index in the spherical case, using somewhat different arguments. The associated conservation laws provide a natural framework for exploring further aspects of the problem, in particular the contribution of limit geodesics to the asymptotic behavior of the Morse index for degenerating Riemann surfaces.
If the collapsing geodesic is homologically trivial then, when the target is a round sphere,  the corresponding geodesic segment of the harmonic maps is expected to be a point (see [6]).
whether this fact extends to the general simply connected target or not is an interesting open problem.

\medskip
The paper is organized as follows.
In Section~\ref{SECTION: Collar Analysis} we develop the analytical
framework needed to study harmonic maps on degenerating collars. Although
the results are closely related to the classical neck analysis for harmonic
maps, our approach follows a different strategy from those usually employed
in the literature. In particular, we give an independent proof of the
convergence result stated in Theorem~\ref{THEOREM: Conv to geod general},
adapted to the collar geometry and to the spectral questions considered
later in the paper. We also prove an $L^{2,1}$-energy quantization result
for the junction regions connecting different scales of convergence. This
implies, in particular, that no neck energy is lost in these transition
regions.

Section~\ref{SECTION: Stability of the Morse Index} is devoted to the
second variation of the energy. We show that the junction necks connecting
different scales do not contribute to the negative part of the second
variation, and we analyze the diagonalization of the Jacobi operator. In
this section we also introduce the appropriate scaling weights and explain
how to construct admissible variations associated with the limiting
geodesic segments.

Finally, in Section~\ref{SECTION: Proof of Main Theorem} we combine these
ingredients to prove the main result, Theorem~\ref{th-morse-stability-degnew}.

\section{Collar Analysis}
\label{SECTION: Collar Analysis}

In this section we carefully study the behavior of the sequence of harmonic maps in the degenerating collar regions.
The goal is to establish convergence results and pointwise bounds based on the work of \cite{DGR25}.
Depending on the context it will be either convenient to represent the collar regions as the isometric copies as long cylinders, as conformal copies in annular domains, or to even brake conformality and reshape the long cylinders to unit length. This allows to see that the harmonic maps are converging to a geodesic segment of the target.

\subsection{Collar Framework}
\label{SUBSECTION: Collar Framework}

As explained in the appendix of \cite{RT18} or in \cite{Zhu10}, the neighborhood (referred to as collar) around the collapsing geodesic can be represented as follows:

\begin{lemma}
There is a neighborhood around $\gamma_k$, a so-called collar, which
is isometric to the cylinder
\begin{equation}
P_k=\left\{(t, \theta): \frac{2 \pi}{l_k} \arctan \left(\sinh \left(\frac{l_k}{2}\right)\right)<t<\frac{2 \pi}{l_k}\left(\pi-\arctan \left(\sinh \left(\frac{l_k}{2}\right)\right)\right), 0 \leq \theta \leq 2 \pi\right\}
\end{equation}
with metric the hyperbolic metric
\begin{equation}
d s^2=\left(\frac{l_k}{2 \pi \sin \frac{l_k t}{2 \pi}}\right)^2\ dt\ d\theta
\end{equation}
here $\gamma_k$ corresponds to $\left\{t=\frac{\pi^2}{l_k}\right\} \subset P_k$.
\end{lemma}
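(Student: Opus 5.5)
The plan is to combine the collar lemma of hyperbolic geometry (Keen--Buser) with an explicit change of variables in the standard annular model of a hyperbolic cylinder. I will first treat the infinite cyclic cover and only afterwards restrict to the region that embeds.

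\textbf{Step 1: the model cylinder.} Let $\gamma_k$ be a simple closed geodesic of length $l_k$ in $(\Sigma_k,h_k)$. Its cyclic cover is the quotient $\mathcal{H}/\langle z\mapsto e^{l_k}z\rangle$ of the upper half plane. Its universal cover is $\mathcal H$ with the metric $|dz|^2/(\operatorname{Im} z)^2$. The geodesic $\gamma_k$ lifts to the imaginary axis.

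In polar coordinates $z=\rho e^{i\varphi}$, with $\varphi\in(0,\pi)$, the metric becomes
\[
\frac{d\rho^2/\rho^2+d\varphi^2}{\sin^2\varphi}.
\]

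\textbf{Step 2: change of variables.} Set $\theta=\frac{2\pi}{l_k}\log\rho$, which is $2\pi$-periodic under the deck transformation. Set $t=\frac{2\pi}{l_k}\varphi$, so that $t\in(0,2\pi^2/l_k)$. Since $d\rho/\rho=\frac{l_k}{2\pi}d\theta$ and $d\varphi=\frac{l_k}{2\pi}dt$, the metric becomes
\[
\Bigl(\frac{l_k}{2\pi\sin\frac{l_kt}{2\pi}}\Bigr)^2(dt^2+d\theta^2).
\]
This is the stated conformal factor. The geodesic $\varphi=\pi/2$ corresponds to $t=\pi^2/l_k$.

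\textbf{Step 3: restricting to the collar.} By the collar lemma, the set of points at distance less than $w_k$ from $\gamma_k$, where $\sinh(w_k)\sinh(l_k/2)=1$, embeds isometrically into $\Sigma_k$. Distance to the imaginary axis is given by $\sinh d=|\cot\varphi|$. The condition $\sinh d<\sinh w_k=1/\sinh(l_k/2)$ is therefore equivalent to $\tan\varphi>\sinh(l_k/2)$ on the left side of the axis, and symmetrically on the right side. This yields exactly the bounds
\[
\varphi\in\bigl(\arctan\sinh(l_k/2),\ \pi-\arctan\sinh(l_k/2)\bigr).
\]
Multiplying by $2\pi/l_k$ gives the stated range of $t$.

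\textbf{Main obstacle.} The nontrivial input is the embeddedness in Step 3, namely the collar lemma itself. It relies on hyperbolic trigonometry of right-angled hexagons and pentagons, as in Buser's book. I will cite it rather than reprove it; the rest of the argument is a direct computation.
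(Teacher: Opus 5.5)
Your proposal is correct, and it is the standard argument behind this lemma. The paper gives no proof of its own and instead cites the appendix of \cite{RT18} and \cite{Zhu10}, where the collar is obtained in exactly this way: from the Keen--Buser collar lemma together with the change of variables $\theta=\frac{2\pi}{l_k}\log\rho$, $t=\frac{2\pi}{l_k}\varphi$ on $\mathcal H/\langle z\mapsto e^{l_k}z\rangle$. Your computation also makes precise that the paper's $dt\,d\theta$ in the metric is to be read as $dt^2+d\theta^2$.
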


We define the following $\varrho$-subcollars of $P_k$
\begin{equation}
P_k^\varrho:=\left[T_k^{1, \varrho}, T_k^{2, \varrho}\right] \times S^1 \subseteq P_k,
\end{equation}
where
\begin{equation}
T_k^{1, \varrho}=\frac{2 \pi}{l_k} \arcsin \left(\frac{\sinh \left(\frac{l_k}{2}\right)}{\sinh \varrho}\right), \quad T_k^{2, \varrho}=\frac{2 \pi^2}{l_k}-\frac{2 \pi}{l_k} \arcsin \left(\frac{\sinh \left(\frac{l_k}{2}\right)}{\sinh \varrho}\right) .
\end{equation}
Whenever $l_k\le 2\rho$ one can verify that $P_k^\varrho$ is exactly the $\varrho$-thin part of $\left(\Sigma_k, h_k\right)$, namely
\begin{equation}
P_k^\varrho=\left\{z \in \Sigma_k, \operatorname{injrad}\left(z ; h_k\right) \leq \varrho\right\} .
\end{equation}
Notice that whenever $k$ is large one has
\begin{equation}
\frac{2}{l_k}\arctan \left(\sinh \left(\frac{l_k}{2}\right)\right) \approx 1
\end{equation}
as well as
\begin{equation}
\arcsin \left(\frac{\sinh \left(\frac{l_k}{2}\right)}{\sinh \varrho}\right) \approx \frac{l_k}{2 \sinh(\varrho)},
\end{equation}
and whenever $\rho$ is small one has
\begin{equation}
\arcsin \left(\frac{\sinh \left(\frac{l_k}{2}\right)}{\sinh \varrho}\right) \approx \frac{l_k}{2 \sinh(\varrho)} \approx \frac{l_k}{2 \varrho}.
\end{equation}
This means that for large $k$ and small $\rho$ one has 
\begin{equation}
\label{EQ: inufn29u89hjn1f923i}
P_k \approx_{iso} \mathtt{P}_k \coloneqq \left[\pi, \frac{2\pi^2}{l_k}-\pi\right] \times S^1
\end{equation}
and
\begin{equation}
\label{EQ: iuNU813EJIn1ud9nudF3}
P_k^\varrho \approx_{iso} \mathtt{P}_k^\varrho \coloneqq \left[\frac{\pi}{\varrho},\frac{2\pi^2}{l_k}-\frac{\pi}{\varrho}\right] \times S^1
\end{equation}
This means that the the cylinders in \eqref{EQ: inufn29u89hjn1f923i} and \eqref{EQ: iuNU813EJIn1ud9nudF3} are almost isometric and conformal equivalent under the natural affine diffeomorphism
\begin{equation}
\Psi: P_k^\varrho\to \mathtt{P}_k^\varrho.
\end{equation}
\textbf{Assumption:} Due to the above justification we will henceforward assume that the collars $P_k$ and $P_k^\varrho$ (and therefore the thin parts of the surfaces) are precisely given by $\mathtt P_k$ and $\mathtt P_k^\varrho$ with the flat Euclidean metric.
Now define 
\begin{equation}
\eta \coloneqq \eta(\varrho) \coloneqq \exp(-\pi/\varrho), \qquad \text{ and }
\delta_k \coloneqq \exp(-2\pi^2/l_k)
\end{equation}
such that under the change of variables 
\begin{equation}
\Phi: \mathtt P_k \to A(e^{-\pi},\delta_k);
\qquad (t,\theta) \mapsto \exp(-t) (\cos\theta,\sin\theta)
\end{equation}
one has the conformal equivalences 
\begin{equation}
\mathtt{P}_k\sim_{conf} A(e^{-\pi},\delta_k)\quad \text{ and } \quad
\mathtt{P}_k^\varrho \sim_{conf} A(\eta,\delta_k),
\end{equation}
where here we are making use of the notation $A(R,r)=B_{R}\setminus \bar{B}_{r/R}$.
Furthermore, to any function $f: \mathtt P_k^\varrho \to \R$ one has the scaling
\begin{equation}
 \int_{\mathtt{P}_k^\varrho} f \ dx = \int_{A(\et,\delta_k)} f\circ\Phi^{-1} \ \frac{dz}{|z|^2},
\end{equation}
We would also like to remark that the map $\Phi$ is conformal and therefore the energy changes at most by a universal constant.

\textbf{Notation:}
Henceforth, whenever we consider a function $f:\Sigma_k \to \mathbb{R}$ in the collar regions $\mathtt P_k$ and $\mathtt P_k^\varrho$ we simply denote it by $f$. Whenever we work on the annulus $A(\eta,\delta_k)$  we denote $f^{\sharp}\coloneqq f \circ \Phi^{-1}$.

\subsection{Pointwise Control of the Angular Derivative in the Collars}

As before consider the sequence of harmonic maps $u_k^\sharp:A(\eta,\delta_k)\to \mathcal N$ on the annuli with uniformly bounded energy (recall that the Dirchlet energy is conformal invariant) with the flat metric.

We set
\begin{equation}
T_{\eta,k}=\log\left(\frac{ \eta^2}{\delta_k}\right).
\end{equation}
For small $\eta>0$ and large $k\in \N$ the following estimate was shown in Lemma V.1 of \cite{DGR25}
\begin{equation}
\label{EQ: jUINDUQWn2u323fnu9inufnujf3UJLCNS91edq}
\begin{aligned}
|x|^2 |\nabla u_k^\sharp|^2
&\le C \norm{\nabla u_k^\sharp}_{L^2(A(\eta,\delta_k))}
\left[ \left(\frac{|x|}{\eta} \right)^\beta + \left(\frac{\delta_k}{|x|\eta} \right)^\beta \right]  + C\frac{\Lambda^2}{T_{\eta,k}^2} \\
&\le C \norm{\nabla u_k^\sharp}_{L^2(A(\eta,\delta_k))}
\left[ \left(\frac{|x|}{\eta} \right)^\beta + \left(\frac{\delta_k}{|x|\eta} \right)^\beta \right]  + C\frac{\norm{\nabla u_k^\sharp}_{L^{2,1}(A(\eta,\delta_k))}^2}{T_{\eta,k}^2} 
\end{aligned}
\end{equation}
for $x\in A(\eta,\delta_k)$, where we used with the Lorentz-H\"older inequality the estimate
\begin{equation}
\begin{aligned}
\label{EQ: UNui92efcijf2JNdqfdq321}
\Lambda &\le 2 \int_{\eta^{-1} \delta_k}^\eta \int_{S^1} \left|\frac{d u_k^\sharp}{d r}\right| d \theta dr
=2 \int_{A(\eta,\delta_k)} \left|\frac{d u_k^\sharp}{d r}\right| \frac{1}{|x|}\ dx \\
&\le C \norm{\frac{d u_k^\sharp}{d r}}_{L^{2,1}(A(\eta,\delta_k))} \norm{\frac{1}{|x|}}_{L^{2,\infty}(A(\eta,\delta_k))} \\
&\le C \norm{\nabla u_k^\sharp}_{L^{2,1}(A(\eta,\delta_k))}.
\end{aligned}
\end{equation}
(For small $\eta>0$ and large $k\in \N$.)

The aim of this subsection is to show that the term involving $1/T_{\eta,k}^2$ can be removed from the estimate whenever we are only bounding the angular derivative.
We start by proving the following elementary general result on harmonic maps:

\begin{lemma} [Angular $\varepsilon$-regulrity]
\label{LEMMA: Angular epsilon-regulrity}
Let $u:B_r\to \mathcal N$ be a harmonic map.
If $\|\nabla u\|_{L^2(B_r)}<\varepsilon_0$ then 
\begin{equation}
\|\partial_\theta u \|_{L^\infty(B_{r/2})} \le \frac{C}{r} \|\partial_\theta u\|_{L^2(B_r)}
\end{equation}
\end{lemma}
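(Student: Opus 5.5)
Here $\partial_\theta u = x_1\partial_2 u - x_2\partial_1 u$ is the angular derivative with respect to the center of $B_r$. The plan is to view $w:=\partial_\theta u$ as a solution of the linearized harmonic map equation. Rotations about the origin commute with the Laplacian, so differentiating \eqref{EQ: HArm map Equation for u_k} along the rotation flow gives
\begin{equation}
-\Delta w = 2\,\mathbb A_u(\nabla u,\nabla w) + (d\mathbb A)_u(w)(\nabla u,\nabla u).
\end{equation}
This is a linear elliptic system for $w$ whose coefficients are controlled by $|\nabla u|$ and $|\nabla u|^2$. By scaling, $u_r(x)=u(rx)$ gives $\partial_\theta u_r(x)=(\partial_\theta u)(rx)$ with the same $L^\infty$ norm, and $\|\partial_\theta u_r\|_{L^2(B_1)}=r^{-1}\|\partial_\theta u\|_{L^2(B_r)}$. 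It therefore suffices to treat $r=1$.

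\textbf{Step 1.} Apply standard $\varepsilon$-regularity for harmonic maps (H\'elein/Schoen). If $\varepsilon_0$ is small, then on $B_{3/4}$ we have $\|\nabla u\|_{C^{k}(B_{3/4})}\le C_k\|\nabla u\|_{L^2(B_1)}\le C_k\varepsilon_0$ for every $k$. In particular the coefficients $a:=2\mathbb A_u(\nabla u,\cdot)$ and $b:=(d\mathbb A)_u(\cdot)(\nabla u,\nabla u)$ are bounded in $L^\infty(B_{3/4})$, together with their derivatives, by a constant depending only on $\mathcal N$.

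\textbf{Step 2.} Derive a local $L^\infty$–$L^2$ estimate for the linear system $-\Delta w = a\cdot\nabla w + b\,w$ on $B_{3/4}$ with bounded coefficients. A Caccioppoli inequality gives $\|\nabla w\|_{L^2(B_{5/8})}\le C\|w\|_{L^2(B_{3/4})}$. Then $W^{2,2}$ interior estimates bound $\|w\|_{W^{2,2}(B_{1/2})}$ by $C\|w\|_{L^2(B_{3/4})}$, and in dimension two the Sobolev embedding $W^{2,2}\hookrightarrow L^\infty$ gives $\|w\|_{L^\infty(B_{1/2})}\le C\|w\|_{L^2(B_1)}$. Alternatively one can run a De Giorgi–Moser iteration, or bootstrap via $W^{1,p}$ estimates.

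The only subtle point is that the estimate must be linear in $\|\partial_\theta u\|_{L^2}$ and not in $\|\nabla u\|_{L^2}$. This is exactly why the linearized equation is used instead of estimating $\nabla u$ directly. I do not expect a serious obstacle beyond checking that the constants depend only on $\mathcal N$. This holds because Step 1 makes the coefficient bounds uniform once $\varepsilon_0$ is fixed, and the orthogonality $w\in T_u\mathcal N$ plays no role in the estimate.
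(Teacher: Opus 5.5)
Your proposal is correct and follows the paper's proof essentially step for step. You reduce to $r=1$ by scaling, differentiate the harmonic map equation in $\theta$ to get a linear system for $w=\partial_\theta u$, use $\varepsilon$-regularity to bound the coefficients in $L^\infty(B_{3/4})$, and conclude via a Caccioppoli inequality, interior $W^{2,2}$ estimates and the embedding $W^{2,2}\hookrightarrow L^\infty$ in two dimensions.

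One remark: rotations about any point are Euclidean isometries, so the same linearized equation and constants hold when the rotation center is not the center of the ball. This is exactly how the lemma is used later, on the off-center balls $B_{2^{-j-2}}(x)$.
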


\begin{proof}
Scaling to the map $\widetilde u(y)\coloneqq u(ry)$ the general result can be derived from the $r=1$ case:
Letting $x=ry$ one finds $\partial_\theta u(x)=\partial_\theta \widetilde u(y)$ and $\norm{\partial_\theta u}_{L^2(B_r)}= r \norm{\partial_\theta\widetilde u}_{L^2(B_1)}$.
\\
Henceforward, we may assume that $r=1$.
We have the PDE
\begin{equation}
\Delta u = \mathbb A(u)(\nabla u,\nabla u)\qquad\text{in }B_1,
\end{equation}
where $\mathbb A$ denotes the second fundamental form of $\mathcal N\subset\mathbb R^m$.
Let $w:=\partial_\theta u$.
We start by showing the following Caccioppoli estimate:

\textbf{Claim 1}
\begin{equation}
\|\nabla w\|_{L^2(B_{3/4})}\le C \|w\|_{L^2(B_1)}.
\end{equation}
Differentiating the equation in $\theta$ yields, in $B_1$,
\begin{equation}\label{eq:w-eqn}
\Delta w
= d\mathbb A(u)[w](\nabla u,\nabla u) + 2\mathbb A(u)(\nabla w,\nabla u).
\end{equation}
Let $\eta\in C_c^\infty(B_1)$. Multiply \eqref{eq:w-eqn} by $\eta^2 w$ and integrate over $B_1$. Integrating by parts gives
\begin{equation}
\int_{B_1}\eta^2|\nabla w|^2
= -2\int_{B_1}\eta\,(\nabla\eta\cdot\nabla w)\cdot w
-\int_{B_1}\eta^2\, w\cdot d\mathbb A(u)[w](\nabla u,\nabla u)
-2\int_{B_1}\eta^2\, w\cdot\mathbb A(u)(\nabla w,\nabla u).
\end{equation}
We estimate each term on the right-hand side. By Cauchy--Schwarz and Young's inequality,
\begin{equation}\label{eq:cutoff}
2\Big|\int_{B_1}\eta\,(\nabla\eta\cdot\nabla w)\cdot w\Big|
\le \frac14\int_{B_1}\eta^2|\nabla w|^2 + C\int_{B_1}|\nabla\eta|^2|w|^2.
\end{equation}
Since $|d\mathbb A|$ is bounded, we have
\begin{equation}\label{eq:zero}
\Big|\int_{B_1}\eta^2\, w\cdot d\mathbb A(u)[w](\nabla u,\nabla u)\Big|
\le C\int_{B_1}\eta^2|\nabla u|^2|w|^2.
\end{equation}
Similarly, using boundedness of $\mathbb A$ and Young's inequality,
\begin{align}
2\Big|\int_{B_1}\eta^2\, w\cdot\mathbb A(u)(\nabla w,\nabla u)\Big|
&\le C\int_{B_1}\eta^2|\nabla u|\,|\nabla w|\,|w| \nonumber\\
&\le \frac{1}{4}\int_{B_1}\eta^2|\nabla w|^2 + C\int_{B_1}\eta^2|\nabla u|^2|w|^2. \label{eq:first}
\end{align}
Combining \eqref{eq:cutoff}--\eqref{eq:first} we obtain
\begin{equation}\label{eq:caccioppoli-bad}
\int_{B_1}\eta^2|\nabla w|^2
\le C\int_{B_1}|\nabla\eta|^2|w|^2
+ C\int_{B_1}\eta^2|\nabla u|^2|w|^2.
\end{equation}
In particular, choosing $0\le\eta\le1$ with $\eta\equiv 1$ on $B_{3/4}$, $\eta\equiv 0$ outside $B_{7/8}$, and $|\nabla\eta|\le C$, we conclude (using standard $\varepsilon$-regularity for $u$) that
\begin{equation}
\int_{B_{3/4}} |\nabla w|^2
\le C\int_{B_1} |w|^2,
\end{equation}
which shows claim 1. \\
Now as $w$ satisfies the elliptic equation \eqref{eq:w-eqn} we can apply interior elliptic regularity to bound
\begin{equation}
\begin{aligned} \label{eq: est of nabla square w}
\norm{\nabla^2 w}_{L^2(B_{1/2})} 
&\le C \left(\norm{w}_{L^2(B_{3/4})} + \norm{d\mathbb A(u)[w](\nabla u,\nabla u) + 2\mathbb A(u)(\nabla w,\nabla u)}_{L^2(B_{3/4})}  \right) \\
&\le C \left(\norm{w}_{L^2(B_{3/4})} + \norm{\nabla u}_{L^\infty(B_{3/4})}^2\norm{w}_{L^2(B_{3/4})} + \norm{\nabla u}_{L^\infty(B_{3/4})}\norm{\nabla w}_{L^2(B_{3/4})}  \right) \\
&\le C \norm{w}_{L^2(B_{1})},
\end{aligned}
\end{equation}
where in the last line we used Claim 1 and standard $\varepsilon$-regularity for $u$.
Using the Sobolev embedding $W^{2,2}(B_{1/2})\hookrightarrow L^\infty(B_{1/2})$, Claim 1 and \eqref{eq: est of nabla square w} we find
\begin{equation}
\norm{w}_{L^\infty(B_{1/2})}
\le C\norm{w}_{W^{2,2}(B_{1/2})} \le C\norm{w}_{L^2(B_{1/2})}.
\end{equation}
\end{proof}

This now allows us to improve the bound \eqref{EQ: jUINDUQWn2u323fnu9inufnujf3UJLCNS91edq} on the angular derivative as desired:

\begin{lemma}
\label{LEMMA: angular bd improved}
Let $u_k^\sharp$ be as before.
There exists some $\beta\in (0,2)$ and some constant $C>0$ such that for small $\eta$ and large $k$ there holds
\begin{equation}
|x^\perp \cdot \nabla u_k^\sharp(x) |^2 = |\partial_\theta u_k^\sharp(x) |^2 \le C \left[ \left(\frac{|x|}{\eta} \right)^\beta + \left( \frac{\delta_k}{|x|\eta} \right)^\beta  \right]
\|\nabla u_k^\sharp\|_{L^2(A(\eta,\delta_k))}^2,
\end{equation}
where $x\in A(\eta,\delta_k)$.
\end{lemma}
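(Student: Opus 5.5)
The plan is to combine the angular $\varepsilon$-regularity of Lemma \ref{LEMMA: Angular epsilon-regulrity} with an $L^2$ decay estimate for $\partial_\theta u_k^\sharp$ on dyadic annuli. The decay estimate will come from the angular part of the energy, which, unlike the radial part, decays exponentially along the cylinder without any $1/T_{\eta,k}$ correction.

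\textbf{Step 1: reduction to a local $L^2$ bound.} Fix $x\in A(\eta,\delta_k)$ with $|x|=\rho$, and consider the ball $B_{\rho/2}(x)$, which lies in the annulus $A_\rho:=B_{2\rho}\setminus B_{\rho/2}$ when $x$ is not too close to the boundary. By the energy quantization \eqref{EQ: oINijnfj1093jn131dS12d}, for $\eta$ small and $k$ large we have $\|\nabla u_k^\sharp\|_{L^2(B_{\rho/2}(x))}<\varepsilon_0$. Lemma \ref{LEMMA: Angular epsilon-regulrity} is stated for $\partial_\theta$ about the center of the ball. However, its proof only uses that $w=\partial_\theta u$ is the derivative of $u$ along a Killing field, so that $w$ satisfies the linearized equation \eqref{eq:w-eqn}. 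The rotation field $x^\perp$ centred at $0$ is such a field, with $|x^\perp|\simeq\rho$ on the ball, so the same Caccioppoli argument plus $W^{2,2}$ regularity applies verbatim to $w=x^\perp\cdot\nabla u_k^\sharp$. This gives
\begin{equation}
|\partial_\theta u_k^\sharp(x)|^2\le \frac{C}{\rho^2}\int_{A_\rho}|\partial_\theta u_k^\sharp|^2\,dy .
\end{equation}
Note that $\rho^{-2}\int_{A_\rho}|\partial_\theta u|^2$ is comparable to $\int_{A_\rho}|\partial_\theta u|^2\,dy/|y|^2$, which is the angular energy on the corresponding unit-length piece of the cylinder.

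\textbf{Step 2: decay of the angular energy.} Work on the cylinder $[0,T]\times S^1$ and set $\phi(t):=\int_{S^1}|\partial_\theta u|^2\,d\theta$ and $\psi(t):=\int_{S^1}|\partial_t u|^2\,d\theta$. The natural route is a three-annuli / Pohozaev argument. The Hopf differential is holomorphic, which gives
\begin{equation}
\int_{S^1}\bigl(|\partial_t u|^2-|\partial_\theta u|^2\bigr)\,d\theta\equiv c_k
\end{equation}
constant in $t$. Combining this with the equation $\partial_t^2u+\partial_\theta^2u=\mathbb A(\nabla u,\nabla u)$, the smallness of the energy, and the Poincar\'e inequality for the mean-zero part of $u$ on circles (this is where the spectral gap $1$ of $-\partial_\theta^2$ enters), one derives a differential inequality of the form
\begin{equation}
\phi''\ge(4-o(1))\phi-o(1)\psi .
\end{equation}
The $\psi$ term should be absorbable as follows: $\psi=\phi+c_k$, and $c_k$ itself is controlled by $\Lambda^2/T^2$. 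The constant part of the radial derivative (the "geodesic" part) is orthogonal to the angular modes, and the coupling through $\mathbb A$ is quadratic and small. The maximum principle on $[0,T]$ then yields
\begin{equation}
\phi(t)\le C\bigl(\phi(0)e^{-\beta t}+\phi(T)e^{-\beta(T-t)}\bigr),\qquad\beta<2,
\end{equation}
with boundary values $\phi(0),\phi(T)\lesssim\|\nabla u\|_{L^2}^2$. Translating back via $t=\log(\eta/|x|)$ gives exactly $(|x|/\eta)^\beta+(\delta_k/(|x|\eta))^\beta$.

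Alternatively, one may accept the DGR25 argument of Lemma V.1, which already produces the decaying bracket for the full gradient plus the error $\Lambda^2/T^2$. One then only needs to show that the error comes solely from the $\theta$-independent part of $\partial_r u$, that is, from the zero Fourier mode of $\partial_t u$. Since $\partial_\theta$ kills that mode, running the same estimate for $\partial_\theta u$ removes the error.

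\textbf{Main obstacle.} The key step is showing that the nonlinear coupling $\mathbb A(\nabla u,\nabla u)$ between the large-ish, slowly varying radial mode, of size about $\Lambda/T$, and the angular modes does not reintroduce a $\Lambda^2/T^2$ source in the $\phi$ inequality. I would handle it by noting that the coupling terms are of the form $|\nabla u|^2\phi$ and $|\nabla u||\nabla\partial_\theta u||\partial_\theta u|$. Both are bounded by $o(1)\cdot(\phi+\int|\partial_t\partial_\theta u|^2)$, using the pointwise bound \eqref{EQ: jUINDUQWn2u323fnu9inufnujf3UJLCNS91edq}, which gives $|\nabla u|\le o(1)$ in cylinder coordinates. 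Since these terms carry $\phi$ as a factor, they perturb the rate $\beta$ rather than add a source term.
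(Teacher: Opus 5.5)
Your fallback paragraph is essentially the paper's proof. Your Step~2, by contrast, takes a genuinely different and more elementary route that can be made to work, but two claims in it are wrong as written.

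\textbf{What the paper does.} It starts from the DGR25 decomposition $A_k\nabla u_k^\sharp=\nabla\varphi_k+\nabla^\perp\psi_k+\nabla\mathfrak h_k$. The harmonic piece $\mathfrak h_k^0=h_{0,k}+C^0_{\eta,k}\log|z|$, which is responsible for the $\Lambda^2/T_{\eta,k}^2$ term in \eqref{EQ: jUINDUQWn2u323fnu9inufnujf3UJLCNS91edq}, is $\theta$-independent and so drops out of $A_k\partial_\theta u_k^\sharp$. Note that this $\theta$-independent part is isolated after the gauge change, not as a Fourier mode of $\partial_t u$ itself. The paper then reruns the dyadic weighted-Wente estimates of DGR25, Prop.~III.1, for the angular component. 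It concludes with the angular $\varepsilon$-regularity on $B_{2^{-j-2}}(x)$, which is your Step~1. Your remark that the rotation field need not be centred at the centre of the ball is correct; the paper uses this silently.

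\textbf{What is wrong in Step~2.} First, $\phi''\ge(4-o(1))\phi$ is false. The map $u=\varepsilon\cosh t\,\cos\theta$, a harmonic map into a flat torus, has $\phi=\pi\varepsilon^2\cosh^2t$, hence $\phi''(0)=2\phi(0)$. Second, and more seriously, suppose a term $-o(1)\psi$ (your $\psi$) really appeared. Substituting $\psi=\phi+c_k$ would leave a source $-o(1)c_k$ of order $o(1)\Lambda^2/T_{\eta,k}^2$. The maximum principle would then return exactly the $1/T_{\eta,k}^2$ floor that the lemma is meant to remove. The Pohozaev identity cannot absorb it.

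\textbf{The fix.} The correct mechanism is the one in your last paragraph: no $\psi$ term arises at all. Differentiate $\partial_t^2u+\partial_\theta^2u=-\mathbb A_u(\nabla u,\nabla u)$ in $\theta$, pair with $\partial_\theta u$, and integrate by parts on $S^1$. This gives
\begin{equation}
\phi''=2\int_{S^1}|\partial_t\partial_\theta u|^2\,d\theta+2\int_{S^1}|\partial_\theta^2u|^2\,d\theta+2\int_{S^1}\partial_\theta^2u\cdot\mathbb A_u(\nabla u,\nabla u)\,d\theta .
\end{equation}
Since $\mathbb A$ is normal and $(\mathrm{id}-P_u)\partial_\theta^2u=dP_u[\partial_\theta u]\,\partial_\theta u$, the last integral is bounded in absolute value by $C\|\nabla u\|_{L^\infty}^2\phi$. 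Here $\|\nabla u\|_{L^\infty}=o(1)$ in cylindrical coordinates, by $\varepsilon$-regularity and \eqref{EQ: oINijnfj1093jn131dS12d}.

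Wirtinger's inequality for the mean-zero function $\partial_\theta u$ then gives $\phi''\ge(2-o(1))\phi$. This yields $\beta=\sqrt2-o(1)$, which already suffices. Combined with $(\phi')^2\le4\phi\int_{S^1}|\partial_t\partial_\theta u|^2$, it upgrades to $(\sqrt\phi)''\ge(1-o(1))\sqrt\phi$, i.e.\ $\beta=2-o(1)$; this is the correct form of your rate. Run the comparison on a slightly enlarged collar, as the paper's estimates implicitly do with $A(2\eta,\delta_k)$. That way the end values of $\phi$ are controlled by the energy via $\varepsilon$-regularity.

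\textbf{Comparison of the two routes.} Yours is self-contained and uses only the equation, Wirtinger and the maximum principle, and it gives an explicit exponent. The paper's route recycles the gauge and Wente machinery already used to prove \eqref{EQ: jUINDUQWn2u323fnu9inufnujf3UJLCNS91edq}. It also singles out the $\log|z|$ harmonic mode as the unique source of the $1/T_{\eta,k}^2$ term. Finally, it needs only $L^2$-smallness of an antisymmetric potential, so it carries over to the other conformally invariant systems treated in that framework.
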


\begin{proof}
We will follow the strategy and the estimates of chapter III in \cite{DGR25}.
As explain in \cite{Riv07} the harmonic map equation can be rewritten using an antisymmetric potential $\Omega_k \in L^2(A(\et
,\delta_k), so(m))$ such that
\begin{equation}
-\Delta u_k^\sharp = \Omega_k \cdot \nabla u_k^\sharp \qquad \text{ in } A(\eta,\delta_k),
\end{equation}
where $|\Omega_k| \le C |\nabla u_k^\sharp|$.
According to the decomosition in section III.2 of \cite{DGR25} one has
\begin{equation}
\label{EQ: JINuinfe289ufn3n232f3qdq12FUJE}
A_k \nabla u_k=\nabla \varphi_k+\nabla^{\perp} \psi_k+\nabla \mathfrak{h}_k, \qquad \text{ in } A(\eta,\delta_k),
\end{equation}
where $\mathfrak h_{k}=\mathfrak h_{k}^++\mathfrak h_{k}^-+\mathfrak h_{k}^0$ is harmonic and
\begin{equation}
\mathfrak{h}_k^{+}=\Re \sum_{n>0} h_{n, k} z^n, \quad \mathfrak{h}_k^{-}=\Re \sum_{n<0} h_{n, k} z^n, \quad \mathfrak{h}_k^0=\mathfrak{h}_{0, k}+C_{\eta, k}^0 \log |z|.
\end{equation}
Also
\begin{equation}
\nabla A_k-A_k \tilde{\Omega}_k=\nabla^{\perp} B_k \quad \text { in } B_1(0),
\end{equation}
and
\begin{equation}
\begin{aligned}
\label{EQ: IUNjn2ffun2ff23fDWQjz}
& \int_{B_1}\left|\nabla A_k\right|^2+\left|\nabla A_k^{-1}\right|^2 d x^2+\int_{B_1}\left|\nabla B_k\right|^2 d x^2+\left\|\operatorname{dist}\left(\left\{A_k, A_k^{-1}\right\}, S O(m)\right)\right\|_{L^{\infty}\left(B_1(0)\right)}^2 \\
& \leq C \int_{A\left(2 \eta, \delta_k\right)}\left|\Omega_k\right|^2 d x^2 \leq C \int_{A\left(2 \eta, \delta_k\right)}\left|\nabla u_k^\sharp\right|^2 d x^2
\end{aligned}
\end{equation}
and we have also obviously
\begin{equation}
\int_{B_1}\left|\nabla B_k\right|^2 d x^2 \leq C \int_{A\left(2 \eta, \delta_k\right)}\left|\nabla u_k^\sharp\right|^2 d x^2
\end{equation}
where the Whitney extensions are chosen as in \cite{DGR25}.
Taking the inner product of \eqref{EQ: JINuinfe289ufn3n232f3qdq12FUJE} with $x^\perp$ we may write
\begin{equation}
A_k \partial_\theta u_k^\sharp = \partial_\theta \varphi_k + \nabla^\perp \psi_k \cdot x^\perp + \partial_\theta\mathfrak{h}_k^{+} + \partial_\theta\mathfrak{h}_k^{-},
\end{equation}
as crucially $\mathfrak{h}_k^{0}$ does not depend on $\theta$.
Using \eqref{EQ: IUNjn2ffun2ff23fDWQjz} with \eqref{EQ: oINijnfj1093jn131dS12d} it is clear that one can bound 
\begin{equation}
| \partial_\theta u_k^\sharp| \le  C |A_k \partial_\theta u_k^\sharp|.
\end{equation}
As in \cite{DGR25} we introduce the notation
\begin{equation}
s_1:=\left[\log \eta^{-1}\right], \quad s_2=\left[\log \left(\eta \delta_k^{-1}\right)\right], \quad A_j=B_{2^{-j}}(0) \backslash B_{2^{-j-1}}(0)
\end{equation}
For $j\in\{s_1,\dots,s_2\}$ estimate
\begin{equation}
\begin{aligned}
\int_{A_j} |\partial_\theta u_k^\sharp|^2
&\le C\left( \int_{A_j} |\partial_\theta \varphi_k|^2 + \int_{A_j} |\partial_\theta \psi_k|^2 + \int_{A_j} |\partial_\theta \mathfrak h_k^+|^2 + \int_{A_j} |\partial_\theta \mathfrak h_k^-|^2 \right) \\
&\le C2^{-2j}\left( \int_{A_j} |\nabla \varphi_k|^2 + \int_{A_j} |\nabla \psi_k|^2 + \int_{A_j} |\nabla \mathfrak h_k^+|^2 + \int_{A_j} |\nabla \mathfrak h_k^-|^2 \right) 
\end{aligned}
\end{equation}
Using the weighted Wente and the strategies of \cite{DGR25} we can derive (by following the proof of Proposition III.1)
\begin{equation}
\begin{aligned}
&\hspace{-15    mm} 2^{2j} \int_{A_j}\left|\partial_\theta u_k^\sharp \right|^2
\leq C \sum_{\ell=s_1}^{s_2} \mu^{|\ell-j|} \left( \int_{A_j} |\nabla \mathfrak h_k^+|^2 + \int_{A_j} |\nabla \mathfrak h_k^-|^2 \right) \\
&+C \sum_{\ell=s_1}^{s_2} \mu^{|\ell-j|} \gamma^{\ell} \norm{\nabla u_k^\sharp}_{L^2(A(\eta,\delta_k))}^4
+C \norm{\nabla u_k^\sharp}_{L^2(A(\eta,\delta_k))}^4\left(\mu^{j-s_1} +\mu^{s_2-j} \right) .\end{aligned}
\end{equation}
Inserting (III.23) as well as (III.24) from \cite{DGR25} one finds
\begin{equation}
 2^{2j} \int_{A_j}\left|\partial_\theta u_k^\sharp\right|^2
\le C \left[ \left(\frac{2^{-j}}{\eta} \right)^\beta + \left( \frac{\delta_k}{2^{-j}\eta} \right)^\beta  \right] \|\nabla u_k^\sharp\|_{L^2(A(\eta,\delta_k))}^2.
\end{equation}
Now for $x\in A_j$ we get with Lemma \ref{LEMMA: Angular epsilon-regulrity} applied to the ball $B_{2^{-j-2}}(x)$ one has
\begin{equation}
\begin{aligned}
|\partial_\theta u^\sharp(x)|^2 
&\le C 2^{2j} \norm{\partial_\theta u^\sharp}_{L^2(B_{2^{-j-2}}(x))}^2 \\
&\le C  2^{2j} \left( \int_{A_{j-1}}\left|\partial_\theta u_k^\sharp\right|^2 + \int_{A_j}\left|\partial_\theta u_k^\sharp\right|^2 + \int_{A_{j+1}}\left|\partial_\theta u_k^\sharp\right|^2 \right) \\
&\le C \left[ \left(\frac{2^{-j}}{\eta} \right)^\beta + \left( \frac{\delta_k}{2^{-j}\eta} \right)^\beta  \right] \|\nabla u_k^\sharp\|_{L^2(A(\eta,\delta_k))}^2.
\end{aligned}
\end{equation}
This proves the Lemma.
\end{proof}

\subsection{Convergence Behavior in the Collars}

The aim of this section is to prove that in the collar region, after rescaling, the harmonic maps are converging to a geodesic.

We denote by $(r,\theta)$ the planar polar coordinates of the annulus $A(\eta,\delta_k)$.
Let 
 \begin{equation}
 r=r(s)=e^{-(sT_{\eta,k}-\log(\eta))}, ~~s=\frac{-\log(r)+\log(\eta)}{T_{\eta,k}},~~~~s\in [0,1]\,.
 \end{equation}
Let us remark that this change of coordinates maps the annulus $A(\eta,\delta_k)$ to the unit length cylinder $[0,1]\times S^1$.
We also introduce
\begin{equation}
\tilde u_k(s,\theta)=u_k^\sharp(r(s),\theta).
\end{equation}
Notice that $\widetilde u$ is in general no longer a harmonic map as the change of coordinates $x\mapsto (\theta,s)$ is not conformal. 
We have the identities
 \begin{equation}
\label{EQ: niun2uif3iun2fmiojnmf232dahtGE}
\begin{aligned}
|\partial_s \tilde u_k(s,\theta)| &= |\partial_r  u_k^\sharp(r,\theta)|rT_{\eta,k}, \\
|\partial_\theta \tilde u_k(s,\theta)| &= |\partial_\theta  u_k^\sharp(r,\theta)|.
\end{aligned}
 \end{equation}
Let
\begin{equation}
\varepsilon_k(\sigma)=\left(\frac{\delta_k}{\eta^2}\right)^{\sigma}
\end{equation}
We observe that for $\sigma\in(0,1)$ we have $\sigma\le s\le 1-\sigma$ if and only if
\begin{equation}
\label{EQ: UINDhun23fu2fu93nj2f}
\frac{\delta_k}{\varepsilon_k(\sigma)\eta }\le r(s) \le \eta \varepsilon_k(\sigma).
\end{equation}

\begin{proposition}\label{estdertheta}
It holds 
 \begin{equation}
 \lim_{\eta\to 0}\limsup_{k\to\infty}\left\| \partial_\theta\tilde u_k\right\|_{L^{\infty}([0,1]\times S^1)}=0
 \end{equation}
 and for all $\sigma\in (0,1/2)$, any fixed $\eta>0$ and any power $\alpha\in \N$
\begin{equation}
\lim_{k\to+\infty}(T_{\eta,k})^\alpha \left\| \partial_\theta\tilde u_k \right\|_{L^{\infty}([\sigma,1-\sigma]\times S^1))} =0.
\end{equation}
\end{proposition}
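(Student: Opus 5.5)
The plan is to read both claims off Lemma~\ref{LEMMA: angular bd improved}, using the identity $|\partial_\theta\tilde u_k(s,\theta)|=|\partial_\theta u_k^\sharp(r(s),\theta)|$ from \eqref{EQ: niun2uif3iun2fmiojnmf232dahtGE}. Let $r=|x|\in[\delta_k/\eta,\eta]$. Then $|x|/\eta\le 1$ and $\delta_k/(|x|\eta)\le 1$, so the bracket in Lemma~\ref{LEMMA: angular bd improved} is at most $2$. This gives, uniformly on $[0,1]\times S^1$,
\begin{equation}
\|\partial_\theta\tilde u_k\|_{L^\infty([0,1]\times S^1)}^2\le 2C\,\|\nabla u_k^\sharp\|_{L^2(A(\eta,\delta_k))}^2 .
\end{equation}
The energy quantization \eqref{EQ: oINijnfj1093jn131dS12d} holds in the concentration-free collar under $\Lambda<\infty$. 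Taking $\limsup_k$ and then $\eta\to0$ therefore yields the first claim.

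For the second claim, fix $\eta$ and $\sigma\in(0,1/2)$. Take $s\in[\sigma,1-\sigma]$. By \eqref{EQ: UINDhun23fu2fu93nj2f} the point $x$ satisfies $\delta_k/(\varepsilon_k(\sigma)\eta)\le|x|\le\eta\,\varepsilon_k(\sigma)$. Hence
\begin{equation}
\frac{|x|}{\eta}\le\varepsilon_k(\sigma),\qquad \frac{\delta_k}{|x|\eta}\le\varepsilon_k(\sigma).
\end{equation}
The bracket is then bounded by $2\varepsilon_k(\sigma)^\beta$. The energy $\|\nabla u_k^\sharp\|_{L^2}$ is uniformly bounded, say by $E_0$, since the Dirichlet energy is conformally invariant and $\sup_kE(u_k)<\infty$. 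This gives
\begin{equation}
\|\partial_\theta\tilde u_k\|_{L^\infty([\sigma,1-\sigma]\times S^1)}\le C E_0\,\varepsilon_k(\sigma)^{\beta/2}=CE_0\Big(\frac{\delta_k}{\eta^2}\Big)^{\sigma\beta/2}=CE_0\,e^{-\sigma\beta T_{\eta,k}/2},
\end{equation}
because $\delta_k/\eta^2=e^{-T_{\eta,k}}$ by definition of $T_{\eta,k}$.

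Finally, $\delta_k=\exp(-2\pi^2/l_k)\to0$ with $\eta$ fixed, so $T_{\eta,k}\to+\infty$. For every $\alpha\in\N$ we have $T^\alpha e^{-\sigma\beta T/2}\to0$, which proves the second claim.

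The only delicate point is making sure Lemma~\ref{LEMMA: angular bd improved} applies on the whole annulus $A(\eta,\delta_k)$, including near its boundary circles, with constants independent of $k$. The lemma is stated for $x\in A(\eta,\delta_k)$, so I will use it as given. If boundary effects were an issue, I would apply it on the slightly larger annulus $A(2\eta,\delta_k)$, which is allowed by the Whitney extension setup in its proof, and absorb the factor $2^\beta$ into $C$. Once this is settled, the argument uses only the exponential decay in $T_{\eta,k}$, which has no trouble beating any polynomial weight.
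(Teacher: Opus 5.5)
Your proposal is correct and follows essentially the same route as the paper: both read the claims directly off Lemma~\ref{LEMMA: angular bd improved} via $|\partial_\theta\tilde u_k|=|\partial_\theta u_k^\sharp|$. The bracket is bounded by $2$ on the whole annulus (combined with energy quantization) for the first claim, and by $2\varepsilon_k(\sigma)^\beta=2e^{-\sigma\beta T_{\eta,k}}$ on $[\sigma,1-\sigma]$ for the second; your use of a uniform energy bound there and your explicit exponential-decay form are only cosmetic differences.
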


\begin{proof}
Using \eqref{EQ: niun2uif3iun2fmiojnmf232dahtGE} together with the fact that $\partial_\theta u_k^\sharp = x^\perp \cdot \nabla u_k^\sharp$ one has with Lemma \ref{LEMMA: angular bd improved} to $(s,\theta)\in [0,1]\times S^1$
\begin{equation}\label{BLA}
\begin{aligned}
|\partial_\theta \widetilde u_k|^2
&= |x^\perp \cdot \nabla u_k^\sharp|^2 \\
&\le C \underbrace{\norm{\nabla u_k^\sharp}_{L^2(A(\eta,\delta_k))}^2}_{\to0}
\underbrace{\left[ \left(\frac{|x|}{\eta} \right)^\beta + \left(\frac{\delta_k}{|x|\eta} \right)^\beta \right]}_{\le 2},
\end{aligned}
\end{equation}
where we also made use of \eqref{EQ: oINijnfj1093jn131dS12d}.
Similar but with Lemma \ref{LEMMA: angular bd improved} we bound
\begin{equation}
\begin{aligned}
(T_{\eta,k})^\alpha|\partial_\theta \widetilde u_k|^2
&= (T_{\eta,k})^\alpha\ |x^\perp \cdot \nabla u_k^\sharp|^2 \\
&\le C (T_{\eta,k})^\alpha \norm{\nabla u_k^\sharp}_{L^2(A(\eta,\delta_k))}^2
\left[ \left(\frac{|x|}{\eta} \right)^\beta + \left(\frac{\delta_k}{|x|\eta} \right)^\beta \right] \\
&\le C (T_{\eta,k})^\alpha \norm{\nabla u_k^\sharp}_{L^2(A(\eta,\delta_k))}^2
\varepsilon(\sigma)^\beta \\
&= C \norm{\nabla u_k^\sharp}_{L^2(A(\eta,\delta_k))}^2
\underbrace{\left(\frac{\delta_k}{\eta^2}\right)^{\sigma\beta}(T_{\eta,k})^\alpha}_{\to0}.
\end{aligned}
\end{equation}
This shows the lemma.
\end{proof}

\begin{proposition}
\label{PROP: BD on dell s tilde u}
For every $\sigma\in (0,1)$   it holds:
 \begin{equation}
\limsup_{k\to+\infty} \left\| \partial_s\widetilde u_k \right\|_{L^{\infty}([\sigma,1-\sigma]\times S^1)}\le C\Lambda.
\end{equation}
\end{proposition}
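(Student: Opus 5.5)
The bound will come from the Hopf differential, combined with the already established smallness of $\partial_\theta\tilde u_k$ (Proposition \ref{estdertheta}). First I write $u_k$ on the flat cylinder $[\pi/\varrho, 2\pi^2/l_k-\pi/\varrho]\times S^1$ with coordinates $(t,\theta)$, where $t=-\log r$. Since $u_k$ is harmonic on the surface, the Hopf differential is holomorphic, and in cylindrical coordinates it is
\begin{equation}
\Phi_k=|\partial_t u_k|^2-|\partial_\theta u_k|^2-2i\,\partial_t u_k\cdot\partial_\theta u_k .
\end{equation}
On the cylinder, a holomorphic function splits into Fourier modes $e^{n(t+i\theta)}$. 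Using the bound on $\partial_\theta u_k$ from Lemma \ref{LEMMA: angular bd improved}, together with $L^\infty$ bounds on $\partial_t u_k$, the nonzero modes decay exponentially towards the middle of the collar. So on the region corresponding to $[\sigma,1-\sigma]$ we have $\Phi_k = c_k + O(\text{exp. small})$, where $c_k$ is the constant mode. I do not want to lean heavily on the global holomorphic structure, so I will phrase this locally, as the paper prefers.

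The more self-contained plan uses the harmonic map equation directly. Integrating $\Delta u_k=\mathbb A(\nabla u,\nabla u)$ over $S^1$ gives, for the mean $\bar u_k(t)$,
\begin{equation}
\frac{d^2}{dt^2}\int_{S^1}u_k\,d\theta = -\int_{S^1}\mathbb A(u_k)(\nabla u_k,\nabla u_k)\,d\theta .
\end{equation}
This shows that $t\mapsto \int_{S^1}\partial_t u_k$ has derivative controlled by $\int|\nabla u_k|^2$, which is small in total by \eqref{EQ: oINijnfj1093jn131dS12d}. Hence $\int_{S^1}\partial_t u_k\,d\theta$ is almost constant in $t$ along the whole collar. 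More useful is the conserved quantity $\int_{S^1}(|\partial_t u|^2-|\partial_\theta u|^2)\,d\theta$, the real part of the zeroth mode of the Hopf differential. It is exactly constant in $t$: this follows from multiplying the equation by $\partial_t u$ and using $\partial_t u\perp \mathbb A$. So
\begin{equation}
\int_{S^1}|\partial_t u_k|^2(t)\,d\theta = c_k + \int_{S^1}|\partial_\theta u_k|^2(t)\,d\theta .
\end{equation}
Averaging over $t\in[\pi/\varrho,2\pi^2/l_k-\pi/\varrho]$, of length $\sim T_{\eta,k}$, gives $c_k\le T_{\eta,k}^{-1}\|\nabla u_k\|_{L^2}^2$. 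That is too weak by itself, so instead I relate $c_k$ to $\Lambda$. The oscillation of $\partial_t u_k$ in $\theta$ is controlled by $\partial_\theta\partial_t u_k$, which is small by elliptic estimates applied to the $\theta$-derivative; this is the argument of Lemma \ref{LEMMA: Angular epsilon-regulrity} applied on unit balls of the cylinder. Hence $|\partial_t u_k|^2(t,\theta)\approx \frac1{2\pi}\int_{S^1}|\partial_t u_k|^2$, so $|\partial_t u_k|$ is almost constant, equal to $\sqrt{c_k/2\pi}$ up to errors that are $o(T_{\eta,k}^{-1})$ in the middle region by Proposition \ref{estdertheta}. Integrating over $t$ then gives
\begin{equation}
\int\frac1{2\pi}\int_{S^1}|\partial_t u_k|\,d\theta\,dt \approx T_{\eta,k}\sqrt{c_k/2\pi},
\end{equation}
and the left side is bounded by $\Lambda+o(1)$ by definition \eqref{lambdaj}. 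Therefore $\sqrt{c_k}\lesssim \Lambda/T_{\eta,k}$. Converting with \eqref{EQ: niun2uif3iun2fmiojnmf232dahtGE}, where $|\partial_s\tilde u_k|=T_{\eta,k}|\partial_t u_k|$, gives $|\partial_s\tilde u_k|\le C\Lambda + o(1)$ on $[\sigma,1-\sigma]\times S^1$.

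The main obstacle is the pointwise-versus-average step: showing that $T_{\eta,k}|\partial_t u_k|$ deviates from its conserved average by $o(1)$ uniformly. The error term is $T_{\eta,k}^2|\partial_\theta u_k|^2$, which Proposition \ref{estdertheta} makes vanish (take $\alpha=2$). The $\theta$-oscillation of $|\partial_t u_k|^2$ requires a bound on $T_{\eta,k}^2|\partial_\theta\partial_t u_k|\,|\partial_t u_k|$. I would handle it by applying Lemma \ref{LEMMA: Angular epsilon-regulrity} in the form $\|\nabla\partial_\theta u\|_{L^\infty}\lesssim\|\partial_\theta u\|_{L^2}$, which is the $W^{2,2}$ estimate in its proof, upgraded by one more derivative. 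This makes $\nabla\partial_\theta u_k$ decay like the $T^{-\alpha}$ bound. If that fails, I would fall back on taking $\Lambda$ via Cauchy--Schwarz only on the averages, which still gives $C\Lambda$ with some $C>1$.
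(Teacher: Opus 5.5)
Your argument is correct, but it takes a genuinely different route from the paper.

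The paper's proof is a two-line consequence of the pointwise estimate imported from Lemma V.1 of \cite{DGR25}, which already contains the logarithmic term $C\Lambda^2/T_{\eta,k}^2$. One writes $|\partial_s\widetilde u_k|^2\le T_{\eta,k}^2|x|^2|\nabla u_k^\sharp|^2$. On $[\sigma,1-\sigma]$ the $\beta$-decaying part is killed by the factor $(\delta_k/\eta^2)^{\sigma\beta}T_{\eta,k}^2\to0$, and only $C\Lambda^2$ survives.

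You instead produce the $\Lambda/T_{\eta,k}$ behaviour yourself, from the exactly conserved zeroth Hopf mode $c_k=\int_{S^1}(|\partial_t u_k|^2-|\partial_\theta u_k|^2)\,d\theta$. This is essentially the mechanism of \cite{CLW}. You need only the angular part of the analysis:
\begin{itemize}
\item Lemma~\ref{LEMMA: angular bd improved} and Proposition~\ref{estdertheta};
\item $\varepsilon$-regularity, giving $|\partial_t u_k|\le C$;
\item decay of $\nabla\partial_\theta u_k$.
\end{itemize}
The paper already supplies the last item in Lemma~\ref{LEMMA: Higher Order cont in the collars of u} and Lemma~\ref{LEMMA: Hgher ordr bds on u}(2). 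Neither depends on the present proposition, so there is no circularity and no need to upgrade Lemma~\ref{LEMMA: Angular epsilon-regulrity}. Your route yields more than the statement: $|\partial_s\widetilde u_k|^2=T_{\eta,k}^2c_k/(2\pi)+o(1)$ uniformly on $[\sigma,1-\sigma]\times S^1$. So the speed is asymptotically constant, and the constant can be taken to be $C=1$.

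Two points should be made explicit.

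First, the approximation $|\partial_t u_k|\approx\sqrt{c_k/2\pi}$ holds only for $s\in[\sigma',1-\sigma']$. You must therefore restrict the length integral to that range, which is dominated by the full integral, and then let $\sigma'\to0$.

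Second, for fixed $\eta$ the quantity $\limsup_k\int_{\delta_k/\eta}^{\eta}\frac1{2\pi}\int_{S^1}|\partial_r u_k|\,d\theta\,dr$ is some $\Lambda_\eta\ge\Lambda$, not $\Lambda+o(1)$. This is harmless: $c_k$ is constant along the whole collar and $T_{\eta,k}/T_{\eta',k}\to1$. Hence, for every $\eta'<\eta$ and $k$ large, the middle region of $A(\eta,\delta_k)$ lies inside $A(\eta',\delta_k)$. Letting $\eta'\to0$ you genuinely obtain $\limsup_k\|\partial_s\widetilde u_k\|_{L^\infty([\sigma,1-\sigma]\times S^1)}\le\Lambda$. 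This is slightly cleaner than what the imported estimate gives for fixed $\eta$.

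Finally, your fallback via Cauchy--Schwarz on the averages would not work, since it bounds the $L^1$-average of $|\partial_t u_k|$ from above rather than below. The main argument does not need it.
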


\begin{proof}
Similar as above combining \eqref{EQ: niun2uif3iun2fmiojnmf232dahtGE} with \eqref{EQ: jUINDUQWn2u323fnu9inufnujf3UJLCNS91edq} one has
\begin{equation}
\begin{aligned}
|\partial_s \widetilde u_k|^2
&\le T_{\eta,k}^2\ |x|^2\  |\nabla u_k^\sharp|^2 \\
&\le C T_{\eta,k}^2\norm{\nabla u_k^\sharp}_{L^2(A(\eta,\delta_k))}
\left[ \left(\frac{|x|}{\eta} \right)^\beta + \left(\frac{\delta_k}{|x|\eta} \right)^\beta \right]
+ C \Lambda^2 \\
&\le C T_{\eta,k}^2\norm{\nabla u_k^\sharp}_{L^2(A(\eta,\delta_k))}
\varepsilon(\sigma)^\beta
+ C \Lambda^2 \\
&= C \norm{\nabla u_k^\sharp}_{L^2(A(\eta,\delta_k))}
\underbrace{\left(\frac{\delta_k}{\eta^2}\right)^{\sigma\beta}T_{\eta,k}^2}_{\to0}
+ C \Lambda^2.
\end{aligned}
\end{equation}
\end{proof}

To establish higher order estimates we will use the following Gagliardo-Nirenberg type interpolation Lemma

\begin{lemma}[Lemma A.1 in \cite{BBH93}]\label{lemmabbh}
Assume that
\begin{equation}
-\Delta v = f \qquad \text{on } \Omega \subset \mathbb{R}^2.
\end{equation}
Then for all $x \in \Omega$ we have
\begin{equation}\label{estbbh}
|\nabla v(x)|^2 \le C\left(
\|f\|_{L^{\infty}(\Omega)} \|v\|_{L^{\infty}(\Omega)}
+ \frac{1}{\operatorname{dist}^2(x,\partial\Omega)} \|v\|_{L^{\infty}(\Omega)}^2
\right),
\end{equation}
where $C>0$ is a global constant independent of the domain or the functions involved.
\end{lemma}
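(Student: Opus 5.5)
The plan is to reduce to the unit ball by scaling and then combine a Green representation with a cutoff to isolate the boundary contribution. Write $d=\operatorname{dist}(x,\partial\Omega)$ and work on $B_d(x)\subset\Omega$. Translating $x$ to the origin and setting $w(y):=v(x+dy)$ on $B_1$, we get $-\Delta w = d^2 f(x+d\,\cdot)=:g$. It then suffices to prove
\begin{equation}
|\nabla w(0)|^2\le C\left(\|g\|_{L^\infty(B_1)}\|w\|_{L^\infty(B_1)}+\|w\|_{L^\infty(B_1)}^2\right).
\end{equation}
Undoing the scaling, the left side becomes $d^2|\nabla v(x)|^2$, the first term on the right becomes $d^2\|f\|_\infty\|v\|_\infty$, and the second becomes $\|v\|_\infty^2$. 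Dividing by $d^2$ then gives exactly the claimed estimate.

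This bound on $B_1$ is not yet the right shape, because it only gives $|\nabla w(0)|\lesssim \|g\|_\infty+\|w\|_\infty$, and that is worse than $\sqrt{\|g\|\|w\|}+\|w\|$ when $\|g\|\gg\|w\|$. So I would first derive the unscaled bound on a ball of variable radius $\rho\le 1$ and optimize over $\rho$. On $B_\rho$, split $w=w_1+w_2$, where $w_1$ is the Newtonian potential of $g\chi_{B_\rho}$ (or the Dirichlet solution on $B_\rho$) and $w_2$ is harmonic.

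\textbf{Step 1.} Bound the particular solution. Differentiating the kernel gives $|\nabla w_1(0)|\le \int_{B_\rho}\frac{|g|}{2\pi|y|}\le C\rho\|g\|_\infty$, and likewise $\|w_1\|_{L^\infty(B_\rho)}\le C\rho^2\|g\|_\infty$.

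\textbf{Step 2.} Bound the harmonic part. By the mean value/gradient estimate for harmonic functions, $|\nabla w_2(0)|\le \frac{C}{\rho}\|w_2\|_{L^\infty(B_\rho)}\le \frac{C}{\rho}\bigl(\|w\|_\infty+C\rho^2\|g\|_\infty\bigr)$.

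\textbf{Step 3.} Combine the two. This yields
\begin{equation}
|\nabla w(0)|\le C\left(\rho\|g\|_\infty+\frac{\|w\|_\infty}{\rho}\right),\qquad 0<\rho\le 1 .
\end{equation}
If $\|w\|_\infty\le\|g\|_\infty$, choose $\rho=(\|w\|_\infty/\|g\|_\infty)^{1/2}\le1$, which gives $|\nabla w(0)|\le C\sqrt{\|g\|\|w\|}$. Otherwise choose $\rho=1$, which gives $|\nabla w(0)|\le C(\|g\|+\|w\|)\le 2C\|w\|$. Squaring in either case gives the target inequality.

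I expect the only delicate point to be this optimization in $\rho$, in particular checking that the constraint $\rho\le1$, i.e. staying inside $\Omega$, is handled by the case split. Everything else is standard potential theory. The constant $C$ is universal because only the planar Newtonian kernel and the harmonic gradient estimate are used, and neither depends on $\Omega$. If $v$ is vector valued, as for $u_k$ in $\R^m$, the same argument applies componentwise.
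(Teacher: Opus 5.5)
Your proof is correct and follows the standard argument for this estimate. The paper gives no proof of its own; it only quotes Lemma A.1 of Bethuel--Brezis--H\'elein, whose standard proof bounds $|\nabla v(x)|\le C\bigl(r\|f\|_{L^\infty}+r^{-1}\|v\|_{L^\infty}\bigr)$ on balls $B_r(x)$ with $r\le\operatorname{dist}(x,\partial\Omega)$ and then optimizes in $r$, which is what your rescaling and choice of $\rho$ do.

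One point in Step 1 needs tightening in two dimensions. Take $w_1$ to be the Dirichlet solution on $B_\rho$, or use the normalized kernel $\frac{1}{2\pi}\log\frac{\rho}{|y-z|}$. The plain logarithmic Newtonian potential only gives $\|w_1\|_{L^\infty(B_\rho)}\le C\rho^2\bigl(1+\log\frac{1}{\rho}\bigr)\|g\|_{L^\infty}$, which would leave a logarithmic loss in the optimization. The degenerate case $\|w\|_{L^\infty}=0$ is trivial and should be set aside before choosing $\rho$.
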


This now allows to show

\begin{lemma}
\label{LEMMA: Higher Order cont in the collars of u}
For any $x\in A(\eta,\delta_k)$ there holds
\begin{equation}
|\nabla\partial_\theta u_k^\sharp(x)|^2
\le C\left(
\|\nabla u_k^\sharp\|_{L^\infty(B_{|x|/2}(x))}^2
+\frac{1}{|x|^2}
\right)\|\partial_\theta u_k^\sharp\|_{L^\infty(B_{|x|/2}(x))}^2.
\end{equation}
for a universal constant $C>0$ that depends only on the geometry of $\mathcal N$.
\end{lemma}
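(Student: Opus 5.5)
The plan is to apply Lemma \ref{lemmabbh} to $v:=\partial_\theta u_k^\sharp$ on the ball $\Omega:=B_{|x|/2}(x)$, after computing the equation satisfied by $v$. The ball lies in the domain of definition of $u_k^\sharp$ when $x$ is not too close to the boundary circles. Near the boundary, $u_k^\sharp$ is in any case defined on the larger collar annulus $A(e^{-\pi},\delta_k)$, which contains $B_{|x|/2}(x)$ for small $\eta$, so this is harmless. Since $\operatorname{dist}(x,\partial\Omega)=|x|/2$, the second term of \eqref{estbbh} already produces the contribution
\begin{equation}
\frac{C}{|x|^2}\|\partial_\theta u_k^\sharp\|_{L^\infty(B_{|x|/2}(x))}^2 .
\end{equation}
The whole task is therefore to bound $\|f\|_{L^\infty(\Omega)}\|v\|_{L^\infty(\Omega)}$ by the right-hand side.

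The first step is to write down the equation. Since $\partial_\theta$ is a Killing field of the flat plane, it commutes with $\Delta$. Differentiating the harmonic map equation as in \eqref{eq:w-eqn} gives
\begin{equation}
-\Delta v = f:= d\mathbb A(u_k^\sharp)[v](\nabla u_k^\sharp,\nabla u_k^\sharp)+2\mathbb A(u_k^\sharp)(\nabla v,\nabla u_k^\sharp)
\end{equation}
(up to sign). The bound
\begin{equation}
|f|\le C|\nabla u_k^\sharp|^2|v|+C|\nabla u_k^\sharp||\nabla v|
\end{equation}
is problematic because it contains $\nabla v$ itself. This is exactly the main obstacle: Lemma \ref{lemmabbh} requires $\|f\|_{L^\infty}$, and $\nabla v$ is what we are trying to estimate.

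My first attempt at removing the $\nabla v$ term is to rewrite it using the harmonic map structure. Since $\mathbb A(u)(\cdot,\cdot)$ is normal, and $v$ and $\nabla u$ are tangent, differentiating the relation $P_u v=v$ allows one to trade the normal part of $\Delta v$ for terms of the form $dP(u)[\nabla u]\,\nabla v$. This does not eliminate $\nabla v$ either. So instead I would proceed by absorption along a family of shrinking balls, applied to the localized quantity
\begin{equation}
M(\rho):=\sup_{B_\rho(x)}|\nabla v|,\qquad \rho\in[|x|/4,|x|/2].
\end{equation}
Apply Lemma \ref{lemmabbh} on $B_{\rho'}(x)$ at points of $B_\rho(x)$, with $\rho<\rho'$. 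This gives
\begin{equation}
M(\rho)^2\le C\big(G^2V+G\,M(\rho')\big)V+\frac{C V^2}{(\rho'-\rho)^2},
\end{equation}
where $G:=\|\nabla u_k^\sharp\|_{L^\infty(B_{|x|/2}(x))}$ and $V:=\|v\|_{L^\infty(B_{|x|/2}(x))}$. Young's inequality turns $CGVM(\rho')$ into $\tfrac12 M(\rho')^2+CG^2V^2$. The standard iteration lemma on $[|x|/4,|x|/2]$ then absorbs the $\tfrac12 M(\rho')^2$ term, and we obtain
\begin{equation}
M(|x|/4)^2\le C\Big(G^2+\frac1{|x|^2}\Big)V^2 .
\end{equation}
This is the claim at the point $x$; only $M(|x|/4)$ is needed, since it controls $|\nabla v(x)|$. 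The iteration requires $M(|x|/2)<\infty$, which holds because $u_k^\sharp$ is smooth.

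The constant depends only on the bounds for $\mathbb A$ and $d\mathbb A$, that is, only on the geometry of $\mathcal N$, together with the universal constant of Lemma \ref{lemmabbh}. I expect the delicate point to be the absorption step, namely making sure the iteration lemma applies with constants uniform in $x$ and $k$. Scaling to $|x|=1$ beforehand makes this transparent.
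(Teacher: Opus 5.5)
Your proposal is correct and follows the paper's strategy. You differentiate the harmonic map equation, apply Lemma~\ref{lemmabbh} to $\partial_\theta u_k^\sharp$, and absorb the $|\nabla u_k^\sharp|\,|\nabla\partial_\theta u_k^\sharp|$ part of $f$ by Young's inequality, with the $1/|x|^2$ term coming from the distance to the boundary.

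The only difference is how the absorption is localized. The paper maximizes $g(y)=(R-|y-x|)\,|\nabla\partial_\theta u_k^\sharp(y)|$ over $\overline{B_R(x)}$ and absorbs at the maximum point $y_0$, where $|\nabla\partial_\theta u_k^\sharp|\le 2|\nabla\partial_\theta u_k^\sharp(y_0)|$ on $B_\rho(y_0)$; this makes the argument self-contained. You instead quote the standard iteration lemma on nested balls $B_\rho(x)\subset B_{\rho'}(x)$. Both versions need only that $\sup|\nabla\partial_\theta u_k^\sharp|$ is finite on the closed ball, and your remark that $\overline{B_{|x|/2}(x)}$ lies inside the full collar annulus justifies this.
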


\begin{proof}
The proof relies on differentiating the harmonic map equation for $u_k^\sharp$ and applying a localized Bernstein-type argument in conjunction with Lemma \ref{lemmabbh}:
We fix $x\in A(\eta,\delta_k)$ and set
\begin{equation}
R:=\frac{|x|}{2}, \qquad 
M_0:=\|\partial_\theta u_k^\sharp\|_{L^\infty(B_R(x))}, \qquad
M_1:=\|\nabla u_k^\sharp\|_{L^\infty(B_R(x))}.
\end{equation}
The harmonic map equation reads
\begin{equation}
-\Delta u_k^\sharp =\mathbb A_{u_k^\sharp}(\nabla u_k^\sharp,\nabla u_k^\sharp),
\end{equation}
Differentiating with respect to $\theta$ gives
\begin{equation}
-\Delta(\partial_\theta u_k^\sharp)
=
D\mathbb A_{u_k^\sharp}[\partial_\theta u_k^\sharp](\nabla u_k^\sharp,\nabla u_k^\sharp)
+2\mathbb A_{u_k^\sharp}(\nabla \partial_\theta u_k^\sharp,\nabla u_k^\sharp).
\end{equation}
Since $\mathbb A$ and $D\mathbb A$ are bounded on $\mathcal N$, we obtain
\begin{equation}
|-\Delta(\partial_\theta u_k^\sharp)|
\le C\big(|\partial_\theta u_k^\sharp||\nabla u_k^\sharp|^2+|\nabla u_k^\sharp|\,|\nabla \partial_\theta u_k^\sharp|\big).
\end{equation}
Now define
\begin{equation}
g(y):=(R-|y-x|)\,|\nabla \partial_\theta u_k^\sharp(y)|, \qquad y\in \overline{B_R(x)},
\end{equation}
and let $y_0\in \overline{B_R(x)}$ be a point where $g$ attains its maximum.
Note that if $y_0 \in \partial B_{R}(x)$, then $\nabla\partial_\theta u_k^\sharp(x)=0$ and the claim follows. Therefore we may assume that $y_0\in {B_R(x)}$.
Set
\begin{equation}
\rho:=\frac12(R-|y_0-x|)>0.
\end{equation}
Then for every $z\in B_\rho(y_0)$,
\begin{equation}
R-|z-x|\ge R-|y_0-x|-|z-y_0|>\rho.
\end{equation}
and hence, by maximality of $g(y_0)$,
\begin{equation}
|\nabla\partial_\theta u_k^\sharp(z)|\,\rho
\le |\nabla\partial_\theta u_k^\sharp(z)|(R-|z-x|)
\le g(y_0)
= |\nabla\partial_\theta u_k^\sharp(y_0)|(R-|y_0-x|)
=2\rho |\nabla\partial_\theta u_k^\sharp(y_0)|.
\end{equation}
Therefore
\begin{equation}
\label{EQ: JUINMiun9if2j3mf2fjnqdwwqQS}
\|\nabla\partial_\theta u_k^\sharp\|_{L^\infty(B_\rho(y_0))}
\le 2|\nabla\partial_\theta u_k^\sharp(y_0)|.
\end{equation}
Applying Lemma~\ref{lemmabbh} to each component of $v=\partial_\theta u_k^\sharp$ on the domain $\Omega=B_\rho(y_0)$ and evaluating at $y_0$, we get
\begin{equation}
|\nabla(\partial_\theta u_k^\sharp)(y_0)|^2
\le
C\left(
\|-\Delta(\partial_\theta u_k^\sharp)\|_{L^\infty(B_\rho(y_0))}
\|\partial_\theta u_k^\sharp\|_{L^\infty(B_\rho(y_0))}
+\frac{1}{\rho^2}\|\partial_\theta u_k^\sharp\|_{L^\infty(B_\rho(y_0))}^2
\right).
\end{equation}
Using $\|\partial_\theta u_k^\sharp\|_{L^\infty(B_\rho(y_0))}\le M_0$ (since $B_\rho(y_0)\subset B_R(x)$), we obtain
\begin{equation}
|\nabla\partial_\theta u_k^\sharp(y_0)|^2
\le
C\left(
\|f\|_{L^\infty(B_\rho(y_0))} M_0
+\frac{1}{\rho^2}M_0^2
\right),
\end{equation}
where
\begin{equation}
|f|\le  C\big(|\partial_\theta u_k^\sharp||\nabla u_k^\sharp|^2+|\nabla u_k^\sharp|\,|\nabla \partial_\theta u_k^\sharp|\big).
\end{equation}
Using the bound \eqref{EQ: JUINMiun9if2j3mf2fjnqdwwqQS}
\begin{equation}
\|f\|_{L^\infty(B_\rho(y_0))}
\le
C_N\big(M_0 M_1^2 +2M_1|\nabla\partial_\theta u_k^\sharp(y_0)|\big).
\end{equation}
Thus
\begin{equation}
|\nabla\partial_\theta u_k^\sharp(y_0)|^2
\le
C\left(
M_0^2 M_1^2 + M_0 M_1|\nabla\partial_\theta u_k^\sharp(y_0)| + \frac{1}{\rho^2}M_0^2
\right).
\end{equation}
By Young's inequality
\begin{equation}
CM_0 M_1|\nabla\partial_\theta u_k^\sharp(y_0)|
\le
\frac{1}{2} |\nabla\partial_\theta u_k^\sharp(y_0)|^2 + C M_0^2 M_1^2.
\end{equation}
Absorbing the second order term into the left-hand side yields
\begin{equation}
|\nabla\partial_\theta u_k^\sharp(y_0)|^2
\le
C\left(M_0^2 M_1^2 +\frac{1}{\rho^2}M_0^2\right).
\end{equation}
Multiplying by $\rho^2$ gives
\begin{equation}
\rho^2 |\nabla\partial_\theta u_k^\sharp(y_0)|^2
\le
C\left(R^2 M_0^2 M_1^2 + M_0^2\right),
\end{equation}
since $\rho\le R$.
Because
\begin{equation}
g(y_0)=2\rho |\nabla\partial_\theta u_k^\sharp(y_0)|,
\end{equation}
we get
\begin{equation}
g(y_0)^2 \le C\left(R^2M_0^2 M_1^2+M_0^2\right).
\end{equation}
Finally, since $g(x)=R|\nabla\partial_\theta u_k^\sharp(x)|\le g(y_0)$,
\begin{equation}
R^2|\nabla\partial_\theta u_k^\sharp(x)|^2
\le
C\left(R^2M_0^2 M_1^2+M_0^2\right).
\end{equation}
Dividing by $R^2$ and recalling $R=|x|/2$, $M_0$, and $M_1$, we conclude
\begin{equation}
|\nabla\partial_\theta u_k^\sharp(x)|^2
\le C\left(
\|\nabla u_k^\sharp\|_{L^\infty(B_{|x|/2}(x))}^2
+\frac{1}{|x|^2}
\right)\|\partial_\theta u_k^\sharp\|_{L^\infty(B_{|x|/2}(x))}^2.
\end{equation}
\end{proof}

\begin{lemma} \label{LEMMA: Hgher ordr bds on u} 
\begin{enumerate}
\item It holds \begin{equation} \label{EQ: IJNUfn2jnfwjf23f}
\lim_{\eta\to0} \limsup_{k\to \infty} \left( \norm{\partial_\theta^2 \widetilde u_k}_{L^\infty((0,1)\times S^1)}^2 + \frac{1}{T_{\eta,k}^2 } \norm{\partial_s \partial_\theta \widetilde u_k}_{L^\infty((0,1)\times S^1)} ^2\right)=0 
\end{equation}
\item For all $\sigma\in (0,1/2)$, any fixed $\eta>0$ and any power $\alpha\in \N$
\begin{equation} \label{EQ: JNINUIjuwenf2984hfun2j3dFDUHI}
\lim_{k\to+\infty}(T_{\eta,k})^\alpha \left( \norm{\partial_\theta^2 \widetilde u_k}_{L^\infty([\sigma,1-\sigma]\times S^1)}^2 + \norm{\partial_s \partial_\theta \widetilde u_k}_{L^\infty([\sigma,1-\sigma]\times S^1)} ^2\right) = 0.
\end{equation}
\item For all $\sigma\in (0,1/2)$ and any fixed $\eta>0$
 \begin{equation}
\limsup_{k\to+\infty} \left\| \partial_s^2 \widetilde u_k \right\|_{L^{\infty}([\sigma,1-\sigma]\times S^1)}\le C\Lambda^2.
\end{equation}
\end{enumerate}
\end{lemma}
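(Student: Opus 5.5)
The plan is to transfer the pointwise bounds on $u_k^\sharp$ to $\widetilde u_k$ via the change of variables $r=r(s)$. Since $\partial_\theta$ commutes with the reparametrization, we have $\partial_\theta^2\widetilde u_k=\partial_\theta^2 u_k^\sharp$ and $\partial_s\partial_\theta\widetilde u_k=-rT_{\eta,k}\,\partial_r\partial_\theta u_k^\sharp$. Moreover $|\partial_\theta^2 u_k^\sharp|\le |x|\,|\nabla\partial_\theta u_k^\sharp|$, and likewise $|\partial_r\partial_\theta u_k^\sharp|\le|\nabla\partial_\theta u_k^\sharp|$. Hence
\begin{equation}
\norm{\partial_\theta^2 \widetilde u_k}^2+\frac{1}{T_{\eta,k}^2}\norm{\partial_s\partial_\theta\widetilde u_k}^2\le 2\sup_x |x|^2|\nabla\partial_\theta u_k^\sharp(x)|^2 .
\end{equation}
This quantity is controlled by Lemma \ref{LEMMA: Higher Order cont in the collars of u}:
\begin{equation}
|x|^2|\nabla\partial_\theta u_k^\sharp(x)|^2\le C\Big(\sup_{B_{|x|/2}(x)}|y|^2|\nabla u_k^\sharp(y)|^2+1\Big)\,\norm{\partial_\theta u_k^\sharp}^2_{L^\infty(B_{|x|/2}(x))},
\end{equation}
where we use $|y|\simeq|x|$ on $B_{|x|/2}(x)$.

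For part 1, estimate \eqref{EQ: jUINDUQWn2u323fnu9inufnujf3UJLCNS91edq} gives $|y|^2|\nabla u_k^\sharp|^2\le C$ uniformly, by energy boundedness and $\Lambda^2/T_{\eta,k}^2\to0$. Lemma \ref{LEMMA: angular bd improved} together with \eqref{EQ: oINijnfj1093jn131dS12d} makes $\norm{\partial_\theta u_k^\sharp}_{L^\infty}^2$ go to $0$ under $\lim_\eta\limsup_k$. One technical point needs care: the ball $B_{|x|/2}(x)$ may exit $A(\eta,\delta_k)$ near the two boundary circles. I will handle this by applying the lemmas on a slightly larger annulus $A(2\eta,\delta_k)$. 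This is legitimate because the collar extends beyond and the estimates of \cite{DGR25} hold there with $\eta$ replaced by $2\eta$. Alternatively, one can use $C^1_{loc}$ convergence to $u_\infty$ near $|x|\sim\eta$, which keeps $\partial_\theta$-derivatives small there only after $\eta\to0$; I would rather enlarge the annulus.

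For part 2, fix $\sigma$ and $\eta$, so that $x$ lies in the range \eqref{EQ: UINDhun23fu2fu93nj2f}. Then $B_{|x|/2}(x)$ lies in the range corresponding to $\sigma/2$ for large $k$, because $|y|$ changes only by a factor of $2$ while $\varepsilon_k(\sigma)\to0$. On that range Lemma \ref{LEMMA: angular bd improved} gives $\norm{\partial_\theta u_k^\sharp}^2\le C\varepsilon_k(\sigma/2)^\beta$, and $|y|^2|\nabla u_k^\sharp|^2\le C$ still holds. Hence
\begin{equation}
T^{\alpha}_{\eta,k}\big(\norm{\partial_\theta^2\widetilde u_k}^2+\norm{\partial_s\partial_\theta\widetilde u_k}^2\big)\le C\,T_{\eta,k}^{\alpha+2}(\delta_k/\eta^2)^{\sigma\beta/2}\to0,
\end{equation}
since $T_{\eta,k}=\log(\eta^2/\delta_k)$, so polynomial growth in $T_{\eta,k}$ loses against exponential decay.

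Part 3 is the main obstacle, because Lemma \ref{LEMMA: Higher Order cont in the collars of u} concerns only $\partial_\theta$. I will use the equation instead. In polar coordinates,
\begin{equation}
r^2\partial_r^2u+r\partial_r u+\partial_\theta^2u=-r^2\mathbb A_u(\nabla u,\nabla u).
\end{equation}
With $r\partial_r=-T_{\eta,k}^{-1}\partial_s$ this becomes
\begin{equation}
T_{\eta,k}^{-2}\partial_s^2\widetilde u_k=-\partial_\theta^2\widetilde u_k-\mathbb A_{\widetilde u_k}\big(T_{\eta,k}^{-2}\partial_s\widetilde u_k\otimes\partial_s\widetilde u_k+\partial_\theta\widetilde u_k\otimes\partial_\theta\widetilde u_k\big).
\end{equation}
Thus
\begin{equation}
|\partial_s^2\widetilde u_k|\le C|\partial_s\widetilde u_k|^2+T_{\eta,k}^2\big(|\partial_\theta^2\widetilde u_k|+C|\partial_\theta\widetilde u_k|^2\big).
\end{equation}
On $[\sigma,1-\sigma]$, Proposition \ref{PROP: BD on dell s tilde u} bounds the first term by $C\Lambda^2$ in the limsup. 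Part 2 and Proposition \ref{estdertheta}, applied with $\alpha\ge 4$, send the second term to $0$. This yields $\limsup_k\norm{\partial_s^2\widetilde u_k}_{L^\infty}\le C\Lambda^2$.
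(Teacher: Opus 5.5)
Your proposal is correct and follows essentially the same route as the paper: you bound $|x|^2|\nabla\partial_\theta u_k^\sharp|^2$ via Lemma \ref{LEMMA: Higher Order cont in the collars of u}, together with the gradient estimate and Lemma \ref{LEMMA: angular bd improved}, convert this into bounds on the $(s,\theta)$-derivatives, and obtain part 3 from the rescaled harmonic map equation on $[0,1]\times S^1$. You also make explicit two points the paper's proof passes over: you enlarge to $A(2\eta,\delta_k)$ so that $B_{|x|/2}(x)$ stays in the region where the estimates hold, and you shift from $\sigma$ to $\sigma/2$ so that these balls lie in the range where the exponential decay applies.
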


\begin{proof}
Notice that $y \in B_{|x|/2}(x)$ implies $\frac{|x|}{2}\le |y| \le 2|x|$.
Hence, combining Lemma \ref{LEMMA: Higher Order cont in the collars of u}, \eqref{EQ: jUINDUQWn2u323fnu9inufnujf3UJLCNS91edq} and then Lemma \ref{LEMMA: angular bd improved} one has at any  $x\in A(\eta,\delta_k)$ that 
\begin{equation}
\label{EQ: JHINDZNU78h3nufdjnjniwHNdu1h3jnqdJN}
\begin{aligned}
|\nabla\partial_\theta u_k^\sharp(x)|^2
& \le C \frac{1}{|x|^2} \|\partial_\theta u_k^\sharp\|_{L^\infty(B_{|x|/2}(x))}^2 \\
& \le C \frac{1}{|x|^2} \left[ \left(\frac{|x|}{\eta} \right)^\beta + \left( \frac{\delta_k}{|x|\eta} \right)^\beta  \right] \|\nabla u_k^\sharp\|_{L^2(A(\eta,\delta_k))}^2
\end{aligned}
\end{equation}
Now note that
\begin{equation}
|\partial_\theta^2 \widetilde u_k| = |x^\perp \cdot \nabla \partial_\theta u_k^\sharp |, \qquad
|\partial_s \partial_\theta \widetilde u_k| = T_{\eta,k} |x\cdot\nabla\partial_\theta u_k^\sharp |
\end{equation}
and therefore with \eqref{EQ: JHINDZNU78h3nufdjnjniwHNdu1h3jnqdJN} we find that for any exponent $\alpha \in \N$ and $x\in A(\eta,\delta_k)$ [equivalently $(s(x),\theta(x))\in (0,1)\times S^1$]
\begin{equation} \label{EQ: JINui2n3ui23nmfjmf23wda85g}
|\partial_\theta^2 \widetilde u_k|^2 + \frac{1}{T_{\eta,k}^2 } |\partial_s \partial_\theta \widetilde u_k| ^2
\le C \left[ \left(\frac{|x|}{\eta} \right)^\beta + \left( \frac{\delta_k}{|x|\eta} \right)^\beta  \right] \|\nabla u_k^\sharp\|_{L^2(A(\eta,\delta_k))}^2
\end{equation}
Now \eqref{EQ: IJNUfn2jnfwjf23f} follows from applying \eqref{EQ: oINijnfj1093jn131dS12d} to \eqref{EQ: JINui2n3ui23nmfjmf23wda85g}.
Now to $x\in A(\eta\varepsilon, \delta_k)$ [equivalently $(s(x),\theta(x))\in (\sigma,1-\sigma)\times S^1$] we have with \eqref{EQ: JINui2n3ui23nmfjmf23wda85g}
\begin{equation}
\begin{aligned}
(T_{\eta,k})^\alpha \left( |\partial_\theta^2 \widetilde u_k|^2 +  |\partial_s \partial_\theta \widetilde u_k| ^2 \right)
&\le C (T_{\eta,k})^{\alpha+2} \left[ \left(\frac{|x|}{\eta} \right)^\beta + \left( \frac{\delta_k}{|x|\eta} \right)^\beta  \right] \|\nabla u_k^\sharp\|_{L^2(A(\eta,\delta_k))}^2 \\
&\le C \underbrace{(T_{\eta,k})^{\alpha+2} \left(\frac{\delta_k}{\eta^2} \right)^{\beta\sigma}}_{\to0},
\end{aligned}
\end{equation}
as desired.
Now notice that $\widetilde u_k$ satisfies the PDE
\begin{equation}
-\partial_s^2 \widetilde u_k - T_{\eta,k}^2 \,\partial_\theta^2 \widetilde u_k
=\mathbb A_{\widetilde u_k}\bigl(\partial_s \widetilde u_k,\partial_s \widetilde u_k\bigr) + T_{\eta,k}^2\,\mathbb A_{\widetilde u_k}\bigl(\partial_\theta \widetilde u_k,\partial_\theta \widetilde u_k\bigr),
\qquad \text{ in } [0,1] \times S^1
\end{equation}
and hence one has the pointwise bounds
\begin{equation}\label{EQ: JUINUzibnfiu2nefji2f2f23d2AQS31858}
\begin{aligned}
|\partial_s^2 \widetilde u_k|
&\le T_{\eta,k}^2 |\partial_\theta^2 \widetilde u_k| + C |\partial_s \widetilde u_k|^2 + C T_{\eta,k}^2 |\partial_\theta \widetilde u_k|^2
\end{aligned}
\end{equation}
Applying \eqref{EQ: JNINUIjuwenf2984hfun2j3dFDUHI}, Propostion \ref{estdertheta} and Proposition \ref{PROP: BD on dell s tilde u} to \eqref{EQ: JUINUzibnfiu2nefji2f2f23d2AQS31858} completes the proof.
\end{proof}

This allows us to now prove the claimed convergence result in Theorem \ref{THEOREM: Conv to geod general}:

\begin{proposition}\label{congeo}
There is a subsequence of $\widetilde u_k$ (that we keep denoting by $\widetilde u_k$) and a geodesic segment $\widetilde{\gamma}_{\infty}\in W^{1,2}_{loc}((0,1);\mathcal N)$ of $\mathcal N$ such that for any fixed $\sigma\in(0,1/2)$ one has
\begin{equation}
\widetilde u_k \rightharpoonup \widetilde \gamma_\infty \qquad \text{ weakly in } W^{1,2}((0,1)\times S^1)
\end{equation}
and 
\begin{equation}
\widetilde u_k \to \widetilde \gamma_\infty \qquad \text{ strongly in } C^1((\sigma,1-\sigma)\times S^1).
\end{equation}
\end{proposition}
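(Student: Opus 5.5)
The plan is to combine the uniform bounds from Propositions \ref{estdertheta} and \ref{PROP: BD on dell s tilde u} with Lemma \ref{LEMMA: Hgher ordr bds on u}, use Arzel\`a--Ascoli on each compact subcylinder, and then pass to the limit in the rescaled harmonic map equation.

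\textbf{Step 1: weak compactness on $(0,1)\times S^1$.} By \eqref{EQ: niun2uif3iun2fmiojnmf232dahtGE}, $\int_{[0,1]\times S^1}|\partial_s\widetilde u_k|^2\,ds\,d\theta$ equals $T_{\eta,k}\int_{A(\eta,\delta_k)}|\partial_r u_k^\sharp|^2 r\,dr\,d\theta$. This is not obviously bounded from the energy alone. So I would instead use the pointwise bound \eqref{EQ: jUINDUQWn2u323fnu9inufnujf3UJLCNS91edq}, exactly as in the proof of Proposition \ref{PROP: BD on dell s tilde u}. It gives
\[
|\partial_s\widetilde u_k|^2\le C T_{\eta,k}^2\big[(|x|/\eta)^\beta+(\delta_k/(|x|\eta))^\beta\big]+C\Lambda^2 .
\]
Integrating the first term in $s$ produces a factor $T_{\eta,k}^2\int_0^1 e^{-\beta sT_{\eta,k}}ds\sim T_{\eta,k}$, which is unbounded. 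I therefore expect the weak $W^{1,2}((0,1)\times S^1)$ statement to require the $L^2$ form of the estimate rather than the pointwise one. On a dyadic annulus $A_j$, the weighted estimate from \cite{DGR25} behind \eqref{EQ: jUINDUQWn2u323fnu9inufnujf3UJLCNS91edq} bounds $2^{2j}\int_{A_j}|\nabla u_k^\sharp|^2$ by $\|\nabla u_k^\sharp\|_{L^2}^{2}\mu^{\min(j-s_1,s_2-j)}+C\Lambda^2/T^2_{\eta,k}$. Summing over $j$, with each annulus having $s$-width $\sim 1/T_{\eta,k}$, gives $\int|\partial_s\widetilde u_k|^2\le C\Lambda^2+C\,T_{\eta,k}\|\nabla u^\sharp_k\|^2_{L^2}$. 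Controlling the last term is the main obstacle. I would try first to absorb it using the extra geometric decay in the DGR estimate (the $\mu$-sum is $O(1)$, and the end contributions are only $O(1)$ annuli). If that fails, I would weaken the claim to weak convergence in $W^{1,2}_{loc}((0,1)\times S^1)$. The $\theta$-derivative is harmless by Proposition \ref{estdertheta}. Weak compactness plus Rellich then gives a subsequence and a limit $\widetilde\gamma_\infty$.

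\textbf{Step 2: $C^1$ convergence on $[\sigma,1-\sigma]\times S^1$.} Proposition \ref{PROP: BD on dell s tilde u}, Proposition \ref{estdertheta} and Lemma \ref{LEMMA: Hgher ordr bds on u}(2),(3) give uniform bounds on $\widetilde u_k$ in $C^2([\sigma,1-\sigma]\times S^1)$. These cover all of $\partial_s\widetilde u_k$, $\partial_\theta\widetilde u_k$, $\partial_s^2\widetilde u_k$, $\partial_s\partial_\theta\widetilde u_k$ and $\partial_\theta^2\widetilde u_k$. Arzel\`a--Ascoli then yields $C^1$ convergence on each such compact set. A diagonal argument over $\sigma=1/n$ gives a single subsequence, consistent with the weak limit. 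Since $\partial_\theta\widetilde u_k\to0$ uniformly, the limit is independent of $\theta$, so $\widetilde\gamma_\infty=\widetilde\gamma_\infty(s)$ and it lies in $W^{1,2}_{loc}((0,1);\mathcal N)$.

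\textbf{Step 3: the limit is a geodesic.} I would pass to the limit in the equation satisfied by $\widetilde u_k$,
\[
-\partial_s^2 \widetilde u_k - T_{\eta,k}^2 \partial_\theta^2 \widetilde u_k=\mathbb A_{\widetilde u_k}(\partial_s \widetilde u_k,\partial_s \widetilde u_k) + T_{\eta,k}^2\mathbb A_{\widetilde u_k}(\partial_\theta \widetilde u_k,\partial_\theta \widetilde u_k).
\]
By Lemma \ref{LEMMA: Hgher ordr bds on u}(2) with $\alpha=4$, the term $T_{\eta,k}^2\partial_\theta^2\widetilde u_k$ tends to $0$ uniformly on $[\sigma,1-\sigma]\times S^1$. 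By Proposition \ref{estdertheta} with $\alpha=4$, so does $T^2_{\eta,k}|\partial_\theta\widetilde u_k|^2$. The $C^1$ convergence handles $\mathbb A_{\widetilde u_k}(\partial_s\widetilde u_k,\partial_s\widetilde u_k)$. Testing against $\varphi\in C^\infty_c((\sigma,1-\sigma))$ gives, in the distributional sense and then classically by bootstrapping,
\[
-\ddot{\widetilde\gamma}_\infty=\mathbb A_{\widetilde\gamma_\infty}(\dot{\widetilde\gamma}_\infty,\dot{\widetilde\gamma}_\infty).
\]
This is the geodesic equation in $\mathcal N\subset\R^m$. Since $\sigma$ is arbitrary, $\widetilde\gamma_\infty$ is a constant-speed geodesic on $(0,1)$, and its speed is bounded by $C\Lambda$ by Proposition \ref{PROP: BD on dell s tilde u}.
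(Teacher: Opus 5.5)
Your Steps 2 and 3 are essentially the paper's argument. The ingredients are the same: uniform $C^2$ bounds on $[\sigma,1-\sigma]\times S^1$ from Propositions \ref{estdertheta} and \ref{PROP: BD on dell s tilde u} and Lemma \ref{LEMMA: Hgher ordr bds on u}, then Arzel\`a--Ascoli, then $\partial_\theta\widetilde\gamma_\infty=0$ from the decay of $\partial_\theta\widetilde u_k$, then passage to the limit in the equation. Your explicit handling of the anisotropic term $T_{\eta,k}^2\partial_\theta^2\widetilde u_k$, and the diagonal extraction over $\sigma=1/n$, are if anything more careful than the paper's one-line version.

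On Step 1 your suspicion is correct. The absorption attempt cannot succeed, so commit to the fallback. Start from your identity
\[
\|\partial_s\widetilde u_k\|_{L^2((0,1)\times S^1)}^2=T_{\eta,k}\int_{A(\eta,\delta_k)}|\partial_r u_k^\sharp|^2\,dx .
\]
For fixed $\eta$, the annulus $\{\eta/2\le|x|\le\eta\}$ is a fixed compact subset of $\overline\Sigma\setminus\{\xi_1,\xi_2\}$. By \eqref{EQ: JINhd2njkd21JNDs82fn28u4fhn2f}, the radial energy of $u_k^\sharp$ there therefore converges to that of $u_\infty$. In the chart this limit is $\tfrac{3\pi}{8}\eta^2|\nabla u_\infty(\xi_1)|^2+O(\eta^3)$, which is positive for small $\eta$ as soon as the smooth extension of $u_\infty$ has nonzero gradient at the puncture.

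Hence $\|\widetilde u_k\|_{W^{1,2}((0,1)\times S^1)}\gtrsim\sqrt{T_{\eta,k}}\to\infty$ in general. The divergence comes from the end annuli $j\approx s_1$ and $j\approx s_2$ themselves, where there is no geometric decay to exploit. So weak convergence in $W^{1,2}((0,1)\times S^1)$ fails in general. What is true, and what your argument proves, is weak convergence in $W^{1,2}_{loc}((0,1)\times S^1)$ together with $C^1$ convergence on each $(\sigma,1-\sigma)\times S^1$.

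The paper's proof does not get around this either. It invokes the same three results, but the only control of $\partial_s\widetilde u_k$ and $\partial_s^2\widetilde u_k$ they provide (Proposition \ref{PROP: BD on dell s tilde u}, Lemma \ref{LEMMA: Hgher ordr bds on u}(3)) is on $[\sigma,1-\sigma]\times S^1$. So the paper too establishes only the local statement. Nothing downstream needs more: the later use of the proposition, in Claim 2 of the proof in Section \ref{SECTION: Proof of Main Theorem}, only requires convergence on $(\sigma,1-\sigma)\times S^1$.
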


\begin{proof}
The bounds in Proposition \ref{estdertheta}, Proposition \ref{PROP: BD on dell s tilde u} and Lemma \ref{LEMMA: Hgher ordr bds on u} together with Eberlein-Smulian and Arzel\`a-Ascoli are implying the existence of a map $\widetilde\gamma_\infty\in W^{1,2}((\sigma,1-\sigma)\times S^1)$ with
\begin{equation}
\widetilde u_k \rightharpoondown \widetilde \gamma_\infty \qquad \text{ weakly in } W^{1,2}((0,1)\times S^1)
\end{equation}
and 
\begin{equation}
\widetilde u_k \to \widetilde \gamma_\infty \qquad \text{ strongly in } C^1((\sigma,1-\sigma)\times S^1).
\end{equation}
The decay of Proposition \ref{estdertheta} implies $\partial_\theta \widetilde\gamma_\infty=0$.
Testing the harmonic map equation \eqref{EQ: HArm map Equation for u_k} with a test function $\varphi$ it is clear that
\begin{equation}
\begin{aligned}
\langle - \Delta \widetilde\gamma_\infty -\mathbb A_{\widetilde\gamma_\infty}(\nabla \widetilde\gamma_\infty, \nabla \widetilde\gamma_\infty),  \varphi \rangle
&= \langle \nabla \widetilde\gamma_\infty, \nabla \varphi \rangle_{L^2} - \langle  \mathbb A_{\widetilde\gamma_\infty}(\nabla \widetilde\gamma_\infty, \nabla \widetilde\gamma_\infty),  \varphi \rangle_{L^2} \\
&= \lim_{k\to\infty} \langle \nabla \widetilde u_k, \nabla \varphi \rangle_{L^2} - \langle  \mathbb A_{\widetilde u_k}(\nabla \widetilde u_k, \nabla \widetilde u_k),  \varphi \rangle_{L^2} \\
&=0.
\end{aligned}
\end{equation}
This implies that $\widetilde\gamma_\infty$ satisfies in distribution the geodesic equation
\begin{equation}
 -  \partial_s^2 \widetilde\gamma_\infty =\mathbb A_{\widetilde\gamma_\infty}(\partial_s \widetilde\gamma_\infty, \partial_s \widetilde\gamma_\infty).
\end{equation}
As this proves that $\widetilde \gamma_\infty$ is a geodesic of $\mathcal N$, the proof of the claim is finished.
\end{proof}

\subsection{No Neck Property for the Connecting Regions}

We have now established that, away from the collars, the sequence of harmonic maps $u_k$ converges to a base map $u_\infty$ and that, in the collars, after rescaling suitably, these are converging locally to a geodesic.
In this section we will treat the connecting regions of these two scales of convergence. 

\par
\bigskip
\begin{lemma} \label{LEMMA: L21 quant is conn necks}
Let $\sigma \in(0,1)$, and
\begin{equation}
\Omega=B_{\eta}(0) \backslash \bar{B}_{\varepsilon_k(\sigma) \eta}(0) \qquad \text{or} \qquad \Omega=B_{\frac{\delta_k}{\varepsilon_k(\sigma) \eta}}(0) \backslash \bar{B}_{\frac{\delta_k}{\eta}}(0)
\end{equation}
where $\varepsilon_k(\sigma)=\left(\frac{\delta_k}{\eta^2}\right)^\sigma$ as above.
Then we have $\nabla u_k^\sharp \in L^{2,1}(\Omega)$ with
\begin{equation}
\lim _{\sigma \rightarrow 0} \lim _{\eta \rightarrow 0} \limsup _{k \rightarrow+\infty}\left\|\nabla u_k^\sharp\right\|_{L^{2,1}(\Omega)}=0
\end{equation}
\end{lemma}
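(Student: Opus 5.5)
The plan is to estimate the $L^{2,1}$ norm on $\Omega$ directly from the pointwise bound \eqref{EQ: jUINDUQWn2u323fnu9inufnujf3UJLCNS91edq}. For a radial-type majorant $g(|x|)$ with $|\nabla u_k^\sharp|\le g$, monotonicity of Lorentz norms reduces the claim to computing $\|g\|_{L^{2,1}(\Omega)}$. We treat $\Omega=B_\eta\setminus \bar B_{\varepsilon_k(\sigma)\eta}$; the inner annulus is symmetric under the inversion $x\mapsto \delta_k x/|x|^2$, which exchanges the two weights $(|x|/\eta)^\beta$ and $(\delta_k/(|x|\eta))^\beta$.

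Taking square roots in \eqref{EQ: jUINDUQWn2u323fnu9inufnujf3UJLCNS91edq} gives on $\Omega$
\begin{equation}
|\nabla u_k^\sharp(x)|\le C\,\|\nabla u_k^\sharp\|_{L^2(A(\eta,\delta_k))}^{1/2}\,\frac{1}{|x|}\Big[\Big(\frac{|x|}{\eta}\Big)^{\beta/2}+\Big(\frac{\delta_k}{|x|\eta}\Big)^{\beta/2}\Big]+C\,\frac{\Lambda}{T_{\eta,k}}\,\frac{1}{|x|}.
\end{equation}
We bound the three terms separately, using the triangle inequality for the $L^{2,1}$ quasinorm, which holds up to a constant.

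\textbf{First term.} The function $|x|^{\beta/2-1}\eta^{-\beta/2}$ lies in $L^{2,1}(B_\eta)$ with norm bounded by a constant $C_\beta$ independent of $\eta$, by scaling: $L^{2,1}$ is scale invariant in two dimensions. Explicitly, its distribution function gives $\int_0^\infty |\{f>\lambda\}|^{1/2}d\lambda \simeq \int (\lambda\eta^{\beta/2})^{-1/(1-\beta/2)}\,d\lambda$ truncated at $\lambda\sim \eta^{-1}$, which is finite because $\beta>0$. The contribution is therefore $\le C\|\nabla u_k^\sharp\|_{L^2(A(\eta,\delta_k))}^{1/2}$, and this tends to $0$ as $k\to\infty$ and then $\eta\to0$ by the energy quantization \eqref{EQ: oINijnfj1093jn131dS12d}.

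\textbf{Second term.} The function $\delta_k^{\beta/2}\eta^{-\beta/2}|x|^{-1-\beta/2}$ on $\{|x|>\delta_k/\eta\}$ is handled in the same way; by the inversion it is the mirror image of the first term and gives the same bound.

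\textbf{Third term.} This is the main obstacle, since $1/|x|$ is only in $L^{2,\infty}$. On a dyadic annulus $B_R\setminus B_r$ its $L^{2,1}$ norm grows like $\log(R/r)$. Here $\log(\eta/(\varepsilon_k\eta))=\sigma T_{\eta,k}$, so the contribution is
\begin{equation}
\frac{C\Lambda}{T_{\eta,k}}\cdot \sigma T_{\eta,k}=C\Lambda\sigma .
\end{equation}
This explains the order of limits in the statement: the term stays bounded as $k\to\infty$ and $\eta\to0$, and vanishes only as $\sigma\to0$. The logarithmic growth can be verified from the decreasing rearrangement: the rearrangement of $1/|x|$ restricted to the annulus is $\sqrt{\pi/t}$ for $t\in(\pi r^2,\pi R^2)$ and zero beyond. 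The $L^{2,1}$ norm is then $\int t^{-1/2}\sqrt{\pi/t}\,dt=\sqrt\pi\,\log(R^2/r^2)$, up to the handling of the initial constant piece.

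The remaining point to check is that $\Lambda$, which here stands for the $\limsup$ quantity, is finite. This holds by assumption, so the three contributions together give
\begin{equation}
\lim_{\sigma\to0}\lim_{\eta\to0}\limsup_{k\to\infty}\|\nabla u_k^\sharp\|_{L^{2,1}(\Omega)}\le \lim_{\sigma\to0}C\Lambda\sigma=0.
\end{equation}
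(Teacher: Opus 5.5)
Your proposal is correct and follows essentially the paper's proof. Both split the square-rooted pointwise bound into three terms. The two power-weight terms contribute $o_{k,\eta}(1)$: you get this from scale invariance of $L^{2,1}$ in two dimensions, the paper from explicit distribution-function integrals (which even give an extra decaying factor $(\delta_k/\eta^2)^{\beta(1-\sigma)/2}$ for the second term). The $|x|^{-1}$ term contributes a quantity of order $\Lambda\,(\sigma+T_{\eta,k}^{-1})$, which is exactly why the outer limit $\sigma\to0$ is needed.

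One small imprecision: the rearrangement you describe ($\sqrt{\pi/t}$ on $(\pi r^2,\pi R^2)$ after a constant initial piece) is that of the majorant $\min(|x|^{-1},r^{-1})$ on $B_R$, not of $|x|^{-1}$ on the annulus. Since this majorant dominates, the resulting bound $2\sqrt{\pi}\,(1+\log(R/r))$ remains valid.
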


\begin{proof}

We   prove the Lemma in the case $\Omega=B(0,\eta)\setminus\bar{B}(0,\varepsilon_k(\sigma) \eta)$, the other case being analogous.\par
By definition of the $L^{2,1}$ norm we have 
\begin{equation}
||\nabla u_k^\sharp ||_{L^{2,1}(B(0,\eta)\setminus\bar{B}(0,\varepsilon_k(\sigma) \eta))}= 2 \int_{0}^\infty\left(\mu\{x\in B(0,\eta)\setminus\bar{B}(0,\varepsilon_k(\sigma) \eta)\,\,\vert\,\,|\nabla u_k^\sharp|\geq t\} \right)^{1/2} dt\,.
\end{equation}
In the following we use the notation $o_{k,\eta}(1)$ to denote a constant depending on $\eta,\delta_k$ that may change from line to line but always satisfies $\lim_{\eta\to0}\limsup_{k\to\infty}o_{k,\eta}(1)=0$.
We are going to use \eqref{EQ: jUINDUQWn2u323fnu9inufnujf3UJLCNS91edq} with the fact that \eqref{EQ: oINijnfj1093jn131dS12d}.
Using the triangle inequality and the homogeneity of the norm, we have
\begin{align}
&2 \int_{0}^\infty \left(\mu\{x\in B(0,\eta)\setminus\bar{B}(0,\varepsilon_k(\sigma) \eta)\,\,\vert\,\,|\nabla u_k^\sharp|\geq t\} \right)^{1/2}dt \nonumber\\ \noindent\leq&\underbrace{o_{k,\eta}(1)\frac{1}{\eta^{\beta/2}}2 \int_{0}^\infty \left(  \mu\{x\in B(0,\eta)\setminus\bar{B}(0,\varepsilon_k(\sigma) \eta)\,\,\vert\,\, |x|^{\beta/2-1}\geq t\}\right)^{1/2}dt}_{(1)}\nonumber\\ \noindent +&
\underbrace{o_{k,\eta}(1)\frac{(\delta_k)^{\beta/2}}{\eta^{\beta/2}}2 \int_{0}^\infty\left( \mu\{x\in B(0,\eta)\setminus\bar{B}(0,\varepsilon_k(\sigma) \eta)\,\,\vert\,\,\frac{1}{|x|^{1+\beta/2}}\geq t\} \right)^{1/2}dt}_{(2)}\nonumber\\ \noindent +&
\underbrace{\frac{\Lambda}{\log(\frac{\eta^2}{\delta_k})}2 \int_{0}^\infty \left( \mu\{x\in B(0,\eta)\setminus\bar{B}(0,\varepsilon_k(\sigma) \eta)\,\,\vert\,\, \frac{1}{|x|} \geq t\} \right)^{1/2}dt}_{(3)}
\end{align}
{\bf \underline{Estimate of $(3)$:}}
We have
\begin{eqnarray}
&&2\frac{\Lambda}{\log(\frac{\eta^2}{\delta_k})}  \int_{0}^\infty\left(\mu\{x\in B(0,\eta)\setminus\bar{B}(0,\varepsilon_k(\sigma) \eta),\,\vert\,\, \frac{1}{|x|}\geq t\}\right)^{1/2}dt\\
&&=2\frac{\Lambda}{\log(\frac{\eta^2}{\delta_k})}  \int_{0}^{(\varepsilon_k(\sigma) \eta)^{-1}}\left(\pi(\min(t^{-1},\eta)^2 -  \varepsilon_k(\sigma)^2\eta^2 )\right)^{1/2}dt\\
&&\le2\frac{\Lambda\sqrt{\pi}}{\log(\frac{\eta^2}{\delta_k})}  \int_{0}^{(\varepsilon_k(\sigma) \eta)^{-1}}\min(t^{-1},\eta)\ dt\\
&&\le2\frac{\Lambda\sqrt{\pi}}{\log(\frac{\eta^2}{\delta_k})} \left( \int_{\eta^{-1}}^{(\varepsilon_k(\sigma) \eta)^{-1}}\frac{1}{t}\ dt + \int_0^{\eta^{-1}}\eta\ dt\right) \\
&&=2\frac{\Lambda\sqrt{\pi}}{\log(\frac{\eta^2}{\delta_k})} \left( \log((\varepsilon_k(\sigma))^{-1}) +1 \right) \\
&&= 2\Lambda\sqrt{\pi} \left( \sigma + \frac{1}{\log(\frac{\eta^2}{\delta_k})} \right) .
\end{eqnarray}
For simplicity now we set $\alpha:=\beta/2$.\par
{\bf \underline{Estimate of $(1)$:}}
 
\begin{eqnarray}
&&\displaystyle{2 o_{k,\eta}(1) \frac{1}{\eta^{\alpha}}  \int_{0}^\infty
\left( \mu\{x\in B(0,\eta)\setminus\bar{B}(0,\varepsilon_k(\sigma) \eta)\,\,\vert\,\,   {|x|^{\alpha-1}} \geq t\} \right)^{1/2}dt}\\
&&\displaystyle{=2 o_{k,\eta}(1) \frac{1}{\eta^{\alpha}}  \int_{0}^{\left(1/{\varepsilon_k(\sigma) \eta}\right)^{1-\alpha}}
\left( \pi \min(\eta,t^{-1/(1-\alpha)})^2 - \pi \varepsilon_k(\sigma)^2 \eta^2 \right)^{1/2}dt}\\
&&\displaystyle{\le2 o_{k,\eta}(1) \frac{1}{\eta^{\alpha}}  \int_{0}^{\left(1/{\varepsilon_k(\sigma) \eta}\right)^{1-\alpha}}
\sqrt\pi \min(\eta,t^{-1/(1-\alpha)}) dt}\\
&&\le\displaystyle{ 2 o_{k,\eta}(1) \frac{\sqrt\pi}{\eta^{\alpha}} \int_{\left(\frac{1}{\eta}\right)^{1-\alpha}}^{\left(1/{\varepsilon_k(\sigma) \eta}\right)^{1-\alpha}}  t^{-1/(1-\alpha)}\ dt + \int_0^{\left(\frac{1}{\eta}\right)^{1-\alpha}}  \eta\ dt}\\
&&\displaystyle{  =2 o_{k,\eta}(1)\frac{\sqrt{\pi}}{\eta^{\alpha}}\left(\frac{\alpha-1}{\alpha} \left[t^{\frac{\alpha}{\alpha-1}}\right]\left|_{(\frac{1}{\eta})^{1-\alpha}}^{(1/{\varepsilon_k(\sigma) \eta})^{1-\alpha}}\right. + \eta^\alpha \right)}\\
&&\displaystyle{  =2 o_{k,\eta}(1)\frac{\sqrt{\pi}}{\eta^{\alpha}}\left(\frac{\alpha-1}{\alpha} (\varepsilon_k(\sigma)^\alpha\eta^\alpha - \eta^\alpha) + \eta^\alpha \right)}\\
&&\displaystyle{  \le2 o_{k,\eta}(1)\frac{\sqrt{\pi}}{\eta^{\alpha}}\left(\frac{1-\alpha}{\alpha} \eta^\alpha + \eta^\alpha \right)}\\
&&\displaystyle{ = 2  o_{k,\eta}(1)\frac{\sqrt{\pi}}{\alpha}}.
\end{eqnarray}
{\bf \underline{Estimate of $(2)$}:}
\begin{align}
&2o_{k,\eta}(1)\frac{(\delta_k)^{\alpha}}{\eta^{\alpha}}
\int_{0}^{\infty}\Big(\mu\{x\in B(0,\eta)\setminus \overline{B}(0,\varepsilon_k(\sigma)\eta)\mid |x|^{-(1+\alpha)}\ge t\}\Big)^{1/2}\,dt
\\
&=2o_{k,\eta}(1)\frac{(\delta_k)^{\alpha}}{\eta^{\alpha}}
\int_{0}^{\left(1/{\varepsilon_k(\sigma)\eta}\right)^{1+\alpha}}
\Big(\pi\,\min\!\big(\eta,t^{-1/(1+\alpha)}\big)^2-\pi\,\varepsilon_k(\sigma)^2\eta^2\Big)^{1/2}\,dt
\\
&\le 2o_{k,\eta}(1)\frac{(\delta_k)^{\alpha}}{\eta^{\alpha}}
\int_{0}^{\left(1/{\varepsilon_k(\sigma)\eta}\right)^{1+\alpha}}
\sqrt{\pi}\,\min\!\big(\eta,t^{-1/(1+\alpha)}\big)\,dt
\\
&\le 2o_{k,\eta}(1)\frac{(\delta_k)^{\alpha}\sqrt{\pi}}{\eta^{\alpha}}
\left(
\int_{\left(\frac{1}{\eta}\right)^{1+\alpha}}^{\left(1/{\varepsilon_k(\sigma)\eta}\right)^{1+\alpha}}
t^{-1/(1+\alpha)}\,dt
+\int_{0}^{\left(\frac{1}{\eta}\right)^{1+\alpha}} \eta\,dt
\right)
\\
&=2o_{k,\eta}(1)\frac{(\delta_k)^{\alpha}\sqrt{\pi}}{\eta^{\alpha}}
\left(
\frac{1+\alpha}{\alpha}\Big[t^{\alpha/(1+\alpha)}\Big]_{\left(\frac{1}{\eta}\right)^{1+\alpha}}^{\left(1/{\varepsilon_k(\sigma)\eta}\right)^{1+\alpha}}
+\eta\left(\frac{1}{\eta}\right)^{1+\alpha}
\right)
\\
&=2o_{k,\eta}(1)\frac{(\delta_k)^{\alpha}\sqrt{\pi}}{\eta^{\alpha}}
\left(
\frac{1+\alpha}{\alpha}\Big((\varepsilon_k(\sigma)\eta)^{-\alpha}-\eta^{-\alpha}\Big)
+\eta^{-\alpha}
\right)
\\
&=2o_{k,\eta}(1)\frac{(\delta_k)^{\alpha}\sqrt{\pi}}{\eta^{\alpha}}
\left(
\frac{1+\alpha}{\alpha}(\varepsilon_k(\sigma)\eta)^{-\alpha}-\frac{1}{\alpha}\eta^{-\alpha}
\right)
\\
&\le 2o_{k,\eta}(1)\frac{(\delta_k)^{\alpha}\sqrt{\pi}}{\eta^{\alpha}}
\left(
\frac{1+\alpha}{\alpha}(\varepsilon_k(\sigma)\eta)^{-\alpha}
\right)
\\
&=2o_{k,\eta}(1)\sqrt{\pi}\,\frac{1+\alpha}{\alpha}\,
\frac{(\delta_k)^{\alpha}}{\varepsilon_k(\sigma)^{\alpha}\eta^{2\alpha}} \\
&=2o_{k,\eta}(1)\sqrt{\pi}\,\frac{1+\alpha}{\alpha}\,
\left( \frac{\delta_k}{\eta^2} \right)^{\alpha(1-\sigma)}.
\end{align}
Combining Estimates (1), (2) and (3) and sending first $k\to \infty$, then $\eta\to0$ and then $\sigma\to0$ the lemma follows.
\end{proof}

\section{Stability of the Morse Index}
\label{SECTION: Stability of the Morse Index}

In this section we are going to prove that the connecting Necks connecting the geodesic segment $\widetilde \gamma_\infty$ and the map $u_\infty$ are asymptotically not contributing to the negativity of the second variation.

\subsection{Positive contribution of the connecting Necks}

The following result was proven in Lemma V.1 of \cite{DGR25} (and estimating the logarithmic term as in \eqref{EQ: UNui92efcijf2JNdqfdq321}).

\begin{lemma} \label{LEMMA: PTS bd of na u in conn regions}
Let $\sigma \in(0,1/2)$ and $\varepsilon_k(\sigma)=\left(\frac{\delta_k}{\eta^2}\right)^\sigma$ as before.
\begin{enumerate}
\item If $\Omega=B_{\eta}(0) \backslash \bar{B}_{\varepsilon_k(\sigma) \eta}(0)$, then for any $x\in \Omega$ one has
\begin{equation}
|x|^2 |\nabla u_k^\sharp|^2 \le C\left[\left(\frac{|x| }{\eta}\right)^\beta+\left(\frac{\eta\varepsilon}{|x|}\right)^\beta\right] \norm{\nabla u_k^\sharp}_{L^2(A\left(2 \eta, \delta_k\right))} +C \frac{\left\|\nabla u_k^\sharp\right\|_{L^{2,1}(\Omega)}^2}{\sigma^2\log ^2\left(\frac{\eta^2}{\delta_k}\right)}.
\end{equation}
\item If $\Omega=B_{{\delta_k}/{\varepsilon_k(\sigma) \eta}}(0) \backslash \bar{B}_{{\delta_k}/{\eta}}(0)$, then for any $x\in \Omega$ one has
\begin{equation}
|x|^2 |\nabla u_k^\sharp|^2 \le C\left[\left(\frac{|x| \eta\varepsilon}{\delta_k}\right)^\beta+\left(\frac{\delta_k}{\eta|x|}\right)^\beta\right] \norm{\nabla u_k^\sharp}_{L^2(A\left(2 \eta, \delta_k\right))} +C \frac{\left\|\nabla u_k^\sharp\right\|_{L^{2,1}(\Omega)}^2}{\sigma^2\log ^2\left(\frac{\eta^2}{\delta_k}\right)}.
\end{equation}
\end{enumerate}
\end{lemma}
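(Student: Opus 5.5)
The estimate is obtained by applying the pointwise bound of Lemma V.1 in \cite{DGR25}, in the form \eqref{EQ: jUINDUQWn2u323fnu9inufnujf3UJLCNS91edq}, to the sub-annulus $\Omega$ instead of the full collar $A(\eta,\delta_k)$. We treat the first case, $\Omega=B_\eta\setminus\bar B_{\varepsilon_k(\sigma)\eta}$; the second follows by the inversion $x\mapsto \delta_k x/|x|^2$, which is conformal, preserves the harmonic map equation and the Dirichlet and $L^{2,1}$ norms (up to universal constants), and exchanges the two connecting regions.

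Lemma V.1 of \cite{DGR25} is a statement about a harmonic map on an arbitrary annulus $B_R\setminus \bar B_{r}$ with small energy on a slightly larger annulus. It bounds $|x|^2|\nabla u|^2$ by two terms. The first is an exponentially decaying term $\left[(|x|/R)^\beta+(r/|x|)^\beta\right]$ times the energy on the enlarged annulus; this comes from the $\mathfrak h^\pm$ modes and the Wente-type parts. The second is the contribution of the logarithmic mode $C^0\log|z|$, of size $|C^0|^2$. First we apply it with $R=\eta$ and $r=\varepsilon_k(\sigma)\eta$. Since $\Omega\subset A(2\eta,\delta_k)$ and the Whitney extensions of \cite{DGR25} are controlled by the energy on $A(2\eta,\delta_k)$, the first term becomes $C\left[(|x|/\eta)^\beta+(\eta\varepsilon/|x|)^\beta\right]\|\nabla u_k^\sharp\|_{L^2(A(2\eta,\delta_k))}$, which is exactly as claimed.

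The second step, which carries the content, is estimating the logarithmic coefficient. We integrate the radial derivative over the whole of $\Omega$. The $\theta$-average of $r\partial_r$ applied to the $\log$ mode equals $C^0_{\eta,k}$, while the $\mathfrak h^\pm$ modes have zero angular average. Hence
\[
|C^0|\,\log\frac{R}{r}\lesssim \int_{r}^{R}\frac1{2\pi}\int_0^{2\pi}|\partial_r u_k^\sharp|\,d\theta\,dr+\text{(error from }\varphi,\psi\text{)}.
\]
Following \eqref{EQ: UNui92efcijf2JNdqfdq321}, we bound the right-hand side via Lorentz--H\"older by $\|\nabla u_k^\sharp\|_{L^{2,1}(\Omega)}\,\|1/|x|\|_{L^{2,\infty}}\le C\|\nabla u_k^\sharp\|_{L^{2,1}(\Omega)}$. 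The $\varphi,\psi$ errors are absorbed into the same $L^{2,1}$ norm using the Wente $L^{2,1}$ estimates, since $A_k$ is close to $SO(m)$. Since $\log(R/r)=\log(1/\varepsilon_k(\sigma))=\sigma\log(\eta^2/\delta_k)$, this gives $|C^0|^2\le C\|\nabla u_k^\sharp\|^2_{L^{2,1}(\Omega)}/(\sigma^2\log^2(\eta^2/\delta_k))$, which is the second term.

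The main obstacle is to check that the decomposition and the constants of \cite{DGR25} are genuinely uniform with respect to the annulus. In particular, $\beta$ and $C$ must not depend on the conformal modulus $\sigma\log(\eta^2/\delta_k)$, and the exponential terms must still be controlled by the energy on $A(2\eta,\delta_k)$ rather than on $\Omega$. We would handle this by keeping the global Coulomb gauge $A_k,B_k$ built on $A(2\eta,\delta_k)$. Only the harmonic part $\mathfrak h_k$ would be re-decomposed on $\Omega$. Its Laurent coefficients are then estimated dyadically exactly as in Proposition III.1 of \cite{DGR25}, so that the decay is measured from the boundaries of $\Omega$.
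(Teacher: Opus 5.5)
Your overall route is the paper's. The paper proves this lemma by applying Lemma V.1 of \cite{DGR25} to the sub-annulus $\Omega$, which has conformal modulus $\log(1/\varepsilon_k(\sigma))=\sigma T_{\eta,k}$ and whose enlarged annulus lies in $A(2\eta,\delta_k)$. It then bounds the logarithmic term by Lorentz--H\"older on $\Omega$, as in \eqref{EQ: UNui92efcijf2JNdqfdq321}. In the form used in \eqref{EQ: jUINDUQWn2u323fnu9inufnujf3UJLCNS91edq}, that logarithmic term is already controlled by the radial length $\int\!\!\int_\Omega|\partial_r u_k^\sharp|\,d\theta\,dr$ divided by the modulus. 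So nothing about $\varphi_k,\psi_k$ has to be reopened.

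Your ``main obstacle'' is also smaller than you fear. On $\Omega$ one has $\frac{\delta_k}{\eta|x|}=\bigl(\frac{\delta_k}{\eta^2}\bigr)^{1-\sigma}\frac{\varepsilon\eta}{|x|}\le\frac{\varepsilon\eta}{|x|}$. Hence the exponential part of the full-collar estimate already gives the claimed exponential part. Moreover, $\mathfrak h_k$ restricted to $\Omega$ has the same Laurent coefficients, so there is nothing to ``re-decompose''. The only new content is localizing the logarithmic term to $\Omega$.

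That localization is exactly where your own justification breaks down.

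\emph{The Wente step.} The Wente $L^{2,1}$ estimate bounds $\|\nabla\varphi_k\|_{L^{2,1}}$ by $\|\nabla B_k\|_{L^2}\|\nabla u_k^\sharp\|_{L^2}\lesssim\|\nabla u_k^\sharp\|^2_{L^2(A(2\eta,\delta_k))}$. This is a global quantity on the extension domain, not $\|\nabla u_k^\sharp\|_{L^{2,1}(\Omega)}$. With the global gauge you propose, your argument only gives $|C^0|\lesssim\bigl(\|\nabla u_k^\sharp\|_{L^{2,1}(\Omega)}+\|\nabla u_k^\sharp\|^2_{L^2(A(2\eta,\delta_k))}\bigr)/(\sigma T_{\eta,k})$. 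This produces an extra term $C\|\nabla u_k^\sharp\|^4_{L^2(A(2\eta,\delta_k))}/(\sigma^2T_{\eta,k}^2)$, which the stated right-hand side does not control in general:
\begin{itemize}
\item in the middle of $\Omega$ the exponential term has size $\varepsilon^{\beta/2}=e^{-\beta\sigma T_{\eta,k}/2}$;
\item energy sitting near $|x|=\delta_k/\eta$ is essentially invisible to $\|\nabla u_k^\sharp\|_{L^{2,1}(\Omega)}$.
\end{itemize}
The extra term would be harmless for Theorem \ref{THEOREM: Pos in conn necks}, by \eqref{EQ: oINijnfj1093jn131dS12d}, but it is not the stated lemma.

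\emph{How to repair it, if you reopen the proof.} The $\psi_k$-part contributes nothing, since $\int_{S^1}\partial_\theta\psi_k\,d\theta=0$. What you actually need is that the circle-flux of $\nabla\varphi_k$ is negligible on most of $\Omega$. One way is to use that this flux is governed by the oscillation of $u_k^\sharp$ on circles, hence by $\partial_\theta u_k^\sharp$. By Lemma \ref{LEMMA: angular bd improved}, $\partial_\theta u_k^\sharp$ decays exponentially with no logarithmic term. Average the flux identity only over $\eta\varepsilon\le|x|\le\eta\varepsilon^{1/2}$. The resulting error can then be absorbed into the first term, because $(|x|/\eta)^\beta+(\eta\varepsilon/|x|)^\beta\ge2\varepsilon^{\beta/2}$ on $\Omega$.

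\emph{The inversion.} The map $x\mapsto\delta_kx/|x|^2$ preserves the Dirichlet energy and the radial length $\int\!\!\int|\partial_r u|\,d\theta\,dr$. It does not preserve the $L^{2,1}$ norm uniformly in the modulus: large values on tiny sets at small scales can change the norm by a factor of order the square root of the number of dyadic scales. Treat the second region directly, since Lemma V.1 applies verbatim there, or apply Lorentz--H\"older on the original region.
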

Recall that $\eta = \exp(-\pi/\varrho)$, $\delta_k = \exp(-2\pi^2/l_k)$ and $T_{\eta,k}=\log \left(\frac{\eta^2}{\delta_k}\right)$.
Let us now introduce the weight function in the long cylinder collars $\mathtt{P}_k^\varrho$
\begin{equation}
\label{EQ: main Weight def}
\mathfrak w_{\sigma,\eta,k}(t,\theta)=
\begin{cases}
\left( \frac{e^{-t}}{\eta} \right)^\beta
+\left(\frac{\eta\varepsilon}{e^{-t}} \right)^\beta
+\frac{1}{\sigma^2T_{\eta,k}^2},
& \text{if } e^{-t}\in[\eta\varepsilon,\eta],\\
\left( \frac{e^{-t}}{\eta\varepsilon} \right)^\beta
+\left(\frac{\delta_k}{\eta\varepsilon\,e^{-t}} \right)^\beta
+\frac{1}{(1-2\sigma)^2T_{\eta,k}^2},
& \text{if } e^{-t}\in\left[\frac{\delta_k}{\eta\varepsilon},\,\eta\varepsilon\right],\\
\left( \frac{e^{-t}\eta\varepsilon}{\delta_k} \right)^\beta
+\left(\frac{\delta_k}{\eta\,e^{-t}} \right)^\beta
+\frac{1}{\sigma^2T_{\eta,k}^2},
& \text{if } e^{-t}\in\left[\frac{\delta_k}{\eta},\,\frac{\delta_k}{\eta\varepsilon}\right].
\end{cases}
\end{equation}
as well as the normalization weight on $\Sigma_k$
\begin{equation}
\label{EQ: UNuh3nfnjidqmk3r9u2mj096529}
\omega_{\eta,k}(q) =
\begin{cases}
\left( \left(\frac{e^{-t}}{\eta}\right)^\beta + \left(\frac{\delta_k}{\eta e^{-t}}\right)^\beta + \frac{1}{T_{\eta,k}^2} \right) &\text{ if } q=(t,\theta)\in \mathtt P_k^\varrho \qquad \left(e^{-t}\in\left[\frac{\delta_k}{\eta},\eta\right]\right), \\
\left( 1 + \left(\frac{\delta_k}{\eta^2}\right)^\beta + \frac{1}{T_{\eta,k}^2} \right) &\text{ if } q\in \Sigma_k \setminus \mathtt P_k^\varrho.
\end{cases}
\end{equation}

Let us remark that $\omega_{\eta,k}$ is well defined on all of $\Sigma_k$ and continuous since $\omega_{\eta,k}|_{\partial\mathtt P_k^\varrho}=\left( 1+ \frac{\delta_k^\beta}{\eta^{2\beta}} + \frac{1}{T_{\eta,k}^2} \right)$.
Introduce the notation
\begin{equation}
\widetilde \omega_{\eta,k}(s,\theta) \coloneqq \omega_{\eta,k}(t(s),\theta) \qquad
\widetilde {\mathfrak w}_{\sigma,\eta,k}(s,\theta) \coloneqq \mathfrak w_{\sigma,\eta,k}(t(s),\theta), \qquad
(s,\theta) \in [0,1] \times S^1.
\end{equation}
as well as
\begin{equation}
\omega_{\eta,k}^\sharp(x) \coloneqq \frac{1}{|x|^2} \omega_{\eta,k}(t(x),\theta(x)) \qquad
{\mathfrak w}_{\sigma,\eta,k}^\sharp(x) \coloneqq \frac{1}{|x|^2} \mathfrak w_{\sigma,\eta,k}(t(x),\theta(x)), \qquad
x\in A(\eta,\delta_k).
\end{equation}
For any $\sigma\in(0,1/2)$ observe the following crucial fact
\begin{equation}
\label{EQ: UInuh82fu9irjIUHNhuf2njuw8375nHnud}
\mathfrak w_{\sigma,\eta,k}(t,\theta) \ge \omega_{\eta,k}(t,\theta) \qquad (t,\theta) \in \mathtt P_k^\varrho.
\end{equation}
We introduce the standard notation
\begin{equation}
L^p_{\omega_{\eta,k}} \coloneqq \left\{ f:\Sigma_k \to \R ; \ \norm{f}_{L^p_{\omega_{\eta,k}}}^p \coloneqq \int_{\Sigma} |f|^p\ \omega_{\eta,k} \ dvol_{h_k}<\infty \ \right\}.
\end{equation}

\begin{theorem}\label{THEOREM: Pos in conn necks} Let $\sigma \in(0,1)$, $\Omega=B_\eta(0) \backslash \bar{B}_{\varepsilon_k(\sigma) \eta}(0)$\\ or $\Omega=B_{ \frac{\delta_k}{\varepsilon_k(\sigma) \eta}}(0) \backslash \bar{B}_{ \frac{\delta_k}{\eta}}(0)$ where $\varepsilon_k(\sigma)=\left(\frac{\delta_k}{\eta^2}\right)^\sigma$.
 For every $\beta \in\left(0, \log _2(3 / 2)\right)$ there exists some constant $\bar{\kappa}>0$ (independent of $k,\eta,\sigma$) such that for $k \in \mathbb{N}$ large, $\eta>0$ small and $\sigma>0$ small one has for all $w \in V_{u_k}$
\begin{equation}
\left(w^\sharp =0 \text { in } \Sigma_k \setminus \Omega \right) \Rightarrow Q_{u_k}(w) \geq \bar{\kappa} \int_{\Sigma_k}|w|^2 \mathfrak w_{\sigma,\eta,k} d v o l_{h_k} \geq \bar{\kappa} \int_{\Sigma_k}|w|^2 \omega_{\eta, k} d v o l_{h_k} \geq 0
\end{equation}
\end{theorem}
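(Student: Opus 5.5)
Our plan is to work entirely in the annulus picture $A(\eta,\delta_k)$, where the Dirichlet part of $Q_{u_k}$ is conformally invariant and the measure $|w|^2\mathfrak w_{\sigma,\eta,k}\,dvol_{h_k}$ becomes $|w^\sharp|^2\mathfrak w^\sharp_{\sigma,\eta,k}\,dx$. We treat $\Omega=B_\eta\setminus\bar B_{\varepsilon_k(\sigma)\eta}$; the other case follows by the inversion $x\mapsto \delta_k/x$. On $\Omega$ the weight $\mathfrak w_{\sigma,\eta,k}$ is exactly the first line of \eqref{EQ: main Weight def}. For $w$ supported in $\Omega$ we bound the potential term pointwise using Lemma \ref{LEMMA: PTS bd of na u in conn regions}:
\begin{equation}
|\mathbb A_{u_k}(du_k,du_k)|\,|w|^2\le C|\nabla u_k^\sharp|^2|w^\sharp|^2\le \frac{C\,\epsilon_{k,\eta,\sigma}}{|x|^2}\left[\left(\frac{|x|}{\eta}\right)^\beta+\left(\frac{\eta\varepsilon}{|x|}\right)^\beta+\frac{1}{\sigma^2T_{\eta,k}^2}\right]|w^\sharp|^2 ,
\end{equation}
with $\epsilon_{k,\eta,\sigma}:=\|\nabla u_k^\sharp\|_{L^2(A(2\eta,\delta_k))}+\|\nabla u_k^\sharp\|^2_{L^{2,1}(\Omega)}$. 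That is, the potential term is bounded by $C\epsilon_{k,\eta,\sigma}\,\mathfrak w^\sharp_{\sigma,\eta,k}|w^\sharp|^2$. By \eqref{EQ: oINijnfj1093jn131dS12d} and Lemma \ref{LEMMA: L21 quant is conn necks}, $\epsilon_{k,\eta,\sigma}$ becomes arbitrarily small for $k$ large, $\eta$ small and $\sigma$ small. The theorem is therefore reduced to a weighted Hardy/Poincaré inequality
\begin{equation}
\int_{\Omega}|\nabla w^\sharp|^2\,dx\ \ge\ \kappa_0\int_\Omega |w^\sharp|^2\,\mathfrak w^\sharp_{\sigma,\eta,k}\,dx\qquad\text{for all } w^\sharp\in W^{1,2}_0(\Omega),
\end{equation}
with $\kappa_0$ independent of $k,\eta,\sigma$. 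Once this holds, we take $\bar\kappa=\kappa_0/2$ and absorb the potential term as soon as $C\epsilon_{k,\eta,\sigma}<\kappa_0/2$. The final inequalities then follow from \eqref{EQ: UInuh82fu9irjIUHNhuf2njuw8375nHnud}.

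To prove the weighted inequality, we pass to cylinder coordinates $t=-\log|x|\in[t_0,t_1]$, with $t_1-t_0=\log(1/\varepsilon)=\sigma T_{\eta,k}$. The Dirichlet energy becomes $\int(|\partial_t w|^2+|\partial_\theta w|^2)\,dt\,d\theta$, and the weight becomes $e^{-\beta(t-t_0)}+e^{-\beta(t_1-t)}+(\sigma T_{\eta,k})^{-2}$, with $w$ vanishing at both ends. We handle the three pieces separately and add the resulting bounds, dropping $|\partial_\theta w|^2$ throughout, so that only the one-dimensional problem in $t$ remains (uniformly in $\theta$).

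\emph{Constant part.} The Poincaré inequality on an interval of length $L=\sigma T_{\eta,k}$ with zero boundary values gives $\int|\partial_t w|^2\ge (\pi^2/L^2)\int|w|^2$. This bounds the $(\sigma T_{\eta,k})^{-2}$ term with a universal constant.

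\emph{Exponential parts.} For $e^{-\beta(t-t_0)}$ we use the one-dimensional Hardy-type estimate $\int_{t_0}^\infty e^{-\beta(t-t_0)}|w|^2\le C_\beta\int_{t_0}^\infty|\partial_t w|^2$ for $w(t_0)=0$. This follows by writing $|w(t)|^2\le (t-t_0)\int|\partial_t w|^2$ and using the integrability of $(t-t_0)e^{-\beta(t-t_0)}$, giving a constant of order $\beta^{-2}$. The term $e^{-\beta(t_1-t)}$ is handled symmetrically.

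The main obstacle is to make the constants $\kappa_0$ and $C$ genuinely uniform. In particular, the prefactors $\sigma^{-2}$ appearing in Lemma \ref{LEMMA: PTS bd of na u in conn regions} and in the weight must match exactly, so that the potential is dominated by $\epsilon_{k,\eta,\sigma}$ times the weight with no leftover $\sigma$-dependence. We must also justify that $\epsilon_{k,\eta,\sigma}$ can be made small in the stated order of limits, which is where the $L^{2,1}$ no-neck result (Lemma \ref{LEMMA: L21 quant is conn necks}) is essential. The restriction $\beta<\log_2(3/2)$ is then only needed to invoke the pointwise bound inherited from \cite{DGR25}.
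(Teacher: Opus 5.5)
Your proposal is correct and is essentially the paper's argument. You combine a weighted Poincaré inequality $\int_\Omega|\nabla f|^2\,dx\ge c_0\int_\Omega|f|^2\,\mathfrak w^\sharp_{\sigma,\eta,k}\,dx$, with $c_0$ independent of $k,\eta,\sigma$, with the pointwise gradient bound in the connecting annuli (whose $\sigma^{-2}T_{\eta,k}^{-2}$ term matches the weight exactly) and the smallness from energy and $L^{2,1}$ quantization, then absorb and use $\mathfrak w_{\sigma,\eta,k}\ge\omega_{\eta,k}$. The only difference is that the paper cites Lemma IV.1 of \cite{DGR25} for the weighted Poincaré inequality, whereas you prove it directly in cylinder coordinates via the interval Poincaré inequality and the bound $|w(t)|^2\le (t-t_0)\int|\partial_t w|^2$, which gives the explicit constant $\kappa_0=(2\beta^{-2}+\pi^{-2})^{-1}$ for every $\beta>0$.
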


\begin{proof}
We will only treat the case $\Omega=B_\eta(0) \backslash \bar{B}_{\varepsilon_k(\sigma) \eta}(0)$, the other being completely analogous.
Let $w \in V_{u_k}$ with $w^\sharp=0 \text{ in } \Sigma \setminus \Omega$.
(Here again as usual $w^\sharp(x)= w (t(x),\theta(x))$).
According to Lemma IV.1 of \cite{DGR25} there exists some constant $c_0>0$ independent of $k$, $\eta$ and $\sigma$ such that	to any function $f:\Omega \to \R$ one has the Poincar\'e type estimate
\begin{equation}
\int_{\Omega} |\nabla f|^2 dx \ge c_0 \int_{\Omega} | f|^2 {\mathfrak w}_{\sigma,\eta,k}^\sharp\ dx
\end{equation}
We can now estimate
\begin{equation}
\begin{aligned}
Q_{u_k}(w_k)
&= \int_\Omega \left(\abs{\nabla w_k^\sharp}^2 -\mathbb A_{u_k^\sharp}(\nabla u_k^\sharp, \nabla u_k^\sharp) \cdot\mathbb A_{u_k^\sharp}(w_k^\sharp,w_k^\sharp) \right) \ dx \\
&\ge \int_{\Omega} \left( c_0 {\mathfrak w}_{\sigma,\eta,k}^\sharp - C |\nabla u_k^\sharp|^2  \right) |w_k^\sharp|^2 \ dx
\end{aligned}
\end{equation}
Lemma \ref{LEMMA: PTS bd of na u in conn regions} together with \eqref{EQ: oINijnfj1093jn131dS12d} and Lemma \ref{LEMMA: L21 quant is conn necks} imply that there exists a constant $\bar\kappa>0$ such that for $k \in \mathbb{N}$ large, $\eta>0$ small and $\sigma>0$ small one has
\begin{equation}
\left( c_0 {\mathfrak w}_{\sigma,\eta,k}^\sharp - C |\nabla u_k^\sharp|^2  \right) \ge \bar\kappa\  {\mathfrak w}_{\sigma,\eta,k}^\sharp \qquad \text{ in all of }\Omega. 
\end{equation}
Together with \eqref{EQ: UInuh82fu9irjIUHNhuf2njuw8375nHnud} this proves the claim. 
\end{proof}

\subsection{The Diagonalization of $Q_{u_k}$ with respect to
the Weights $\omega_{\eta,k}$}

In this section we will precisely explain how to diagonalize the Jacobi operator associated to the second variation.
We then proceed to show the stability of the index by suitably defining meaningful limiting variations.
This will illustrate why the choice of the weights \eqref{EQ: main Weight def} is suitable to prove our main result.

Let $\mathrm{n}_1,\dots,\mathrm{n}_{m-n} \in \Gamma\big((T\mathcal N)^\perp\big)$ be an orthonormal frame of the normal bundle of $\mathcal N \subset \R^m$.
Define
\begin{equation}
\label{EQ: iwenf8932injIUJN128ee12}
S_{u_{k}}(\nabla u_{k}) \coloneqq- \sum_{j=1}^{m-n} \Big\langle\mathbb A_{u_k}(\nabla u_k,\nabla u_k), \mathrm n_j(u_k) \Big\rangle\ D(\mathrm n_{j})_{u_k}
\end{equation}
such that for any tangent vector fields $X,Y \in \Gamma (u_k^{-1} T\mathcal N)$ we have
\begin{equation}
\mathbb A_{u_k}(\nabla u_k, \nabla u_k) \cdot\mathbb A_{u_k}(X,Y)
= (S_{u_{k}}(\nabla u_{k}) X) \cdot Y,
\end{equation}
and the pointwise bound
\begin{equation}
\abs{S_{u_{k}}(\nabla u_{k}) X}
\le C \abs{\nabla u_k}^2 \abs{X}.
\end{equation}
We denote by
\begin{equation}
\langle w, v\rangle_{\omega_{\eta, k}}
=\int_{\Sigma_k} w \cdot v\ \omega_{\eta, k} d v o l_{h_k}
\end{equation}
We consider the diagonalization of the self-adjoint operator $\mathcal{L}_{\eta, k}$ on $V_{u_k}$ with respect to $\langle,\rangle\omega_{\eta, k}$ given by
\begin{equation}
Q_{u_k}(w)
=\langle\mathcal{L}_{\eta, k} w, w\rangle_{\omega_{\eta, k}}
=\int_{\Sigma_k}\left[-\omega_{\eta, k}^{-1} \Delta_h w-\omega_{\eta, k}^{-1}\ S_{u_{k}}(\nabla u_{k}) w \right] \cdot w\ \omega_{\eta, k}\ d v o l_{h_k}
\end{equation}
The operator $\mathcal{L}_{\eta, k}$ is given by
\begin{equation}
\mathcal{L}_{\eta, k}= \omega_{\eta, k}^{-1} P_{u_k}\left( -\Delta_h w- S_{u_{k}}(\nabla u_{k}) w\right),
\end{equation}
where $P_{u_k}(x): \R^{m}\rightarrow T_{u_k(x)}\mathcal N$ is the orthogonal $m\times m$ projection matrix.
Note that by construction $\mathcal L_{\eta,k}$ is self-adjoint with respect to the inner product $\langle \cdot ,\cdot \rangle_{\omega_{\eta,k}}$, i.e.
\begin{equation}\label{EQ: wijenfuNIUD138en9qd}
\langle \mathcal L_{\eta,k} w,v \rangle_{\omega_{\eta,k}} = \langle w, \mathcal L_{\eta,k} v \rangle_{\omega_{\eta,k}} .
\end{equation}
We denote by
\begin{equation}
\mathcal{E}_{\eta, k}(\lambda):=\left\{w \in V_{u_k}: \quad \mathcal{L}_{\eta, k}=\lambda u\right\}
\end{equation}
the eigenspace for the eigenvalue $\lambda$ associated to the operator $\mathcal{L}_{\eta, k}$ and by
\begin{equation}
\Lambda_{\eta,k} \coloneqq\big\{ \lambda\in \R \ ; \ \mathcal E_{\eta,k}(\lambda) \setminus\{0\}  \neq \emptyset \big\}
\end{equation}
its spectrum.
Recall the definition of $V_{u_k}$ in \eqref{0.2-a} and consider also the larger space
\begin{equation}
U_{u_k} = \left\{ w \in L^{2}_{\omega_{\eta,k}}(\Sigma_k; \R^{m}) \forwhich w(x) \in T_{u_k(x)}\mathcal N, \quad \text{for a.e. } x\in\Sigma_k \right\}.
\end{equation}

The following spectral decomposition was proven in \cite{DGR25}.

\begin{lemma}[Spectral Decomposition]
\label{LEMMA: Spectral decomp of L}
There exists a Hilbert basis of the space $(U_{u_k}, \langle\cdot,\cdot\rangle_{\omega_{\eta,k}})$ of eigenfunctions of the operator $\mathcal L_{\eta,k}$ and the eigenvalues of $\mathcal L_{\eta,k}$ satisfy 
\begin{equation}
\lambda_1<\lambda_2<\lambda_3\ldots \rightarrow\infty.
\end{equation}
Furthermore, one has the orthogonal decomposition 
\begin{equation}
U_{u_k} = \bigoplus_{\lambda \in \Lambda_{\eta,k}}\mathcal E_{\eta,k}(\lambda).
\end{equation}
\end{lemma}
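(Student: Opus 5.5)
The plan is to realize $\mathcal L_{\eta,k}$ as the operator associated with a closed, semibounded quadratic form on the weighted Hilbert space $(U_{u_k},\langle\cdot,\cdot\rangle_{\omega_{\eta,k}})$ with compact resolvent, and then to invoke the spectral theorem for compact self-adjoint operators. For fixed $k$ and $\eta$ the weight $\omega_{\eta,k}$ is continuous on the closed surface $\Sigma_k$ and is bounded above and below by positive constants; these constants depend on $k$ and $\eta$, which is harmless here. Hence $L^2_{\omega_{\eta,k}}$ and $L^2(\Sigma_k)$ coincide as sets with equivalent norms. Moreover, $U_{u_k}$ is a closed subspace, since the pointwise condition $P_{u_k}w=w$ is preserved under $L^2$ limits along an a.e. convergent subsequence. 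Finally, $V_{u_k}$ is dense in $U_{u_k}$: given $w\in U_{u_k}$, approximate it in $L^2$ by smooth $\phi_j$ and take $P_{u_k}\phi_j\in V_{u_k}$. This uses that $u_k$ is smooth, being a harmonic map on a closed surface, so $P_{u_k}$ is smooth.

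Next I check that $Q_{u_k}$ is bounded below and coercive relative to the weighted norm. Since $u_k$ is smooth, $|S_{u_k}(\nabla u_k)|\le C\|\nabla u_k\|_{L^\infty}^2=:C_k$. Hence
\begin{equation}
Q_{u_k}(w)+\mu\,\|w\|^2_{L^2_{\omega_{\eta,k}}}\ \ge\ \int_{\Sigma_k}|dw|^2_{h_k}\,dvol_{h_k}+\|w\|^2_{L^2(\Sigma_k)}
\end{equation}
for $\mu$ large, depending on $k$ and $\eta$. The form $B(w,v):=Q_{u_k}(w,v)+\mu\langle w,v\rangle_{\omega_{\eta,k}}$ is therefore a scalar product on $V_{u_k}$ equivalent to the $W^{1,2}$ one. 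Since $V_{u_k}$ is closed in $W^{1,2}$, Riesz representation gives a bounded solution operator $G:U_{u_k}\to V_{u_k}$ defined by $B(Gf,v)=\langle f,v\rangle_{\omega_{\eta,k}}$ for all $v\in V_{u_k}$.

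By Rellich's theorem on the compact surface $\Sigma_k$, $G$ is compact as a map $U_{u_k}\to U_{u_k}$. It is self-adjoint and positive with respect to $\langle\cdot,\cdot\rangle_{\omega_{\eta,k}}$ because $B$ is symmetric; this is where the self-adjointness identity \eqref{EQ: wijenfuNIUD138en9qd} enters. It is injective since $\langle f,v\rangle_{\omega_{\eta,k}}=0$ for all $v$ in the dense set $V_{u_k}$ forces $f=0$. The spectral theorem then yields a Hilbert basis of eigenvectors of $G$ with eigenvalues $\nu_i\downarrow 0$, each of finite multiplicity. Setting $\lambda_i=\nu_i^{-1}-\mu$ gives eigenvectors of $\mathcal L_{\eta,k}$ in the weak sense. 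By elliptic regularity, and because the projection $P_{u_k}$ appears in the Euler--Lagrange equation since test functions are tangential, these are strong solutions of $\mathcal L_{\eta,k}w=\lambda w$. Distinct eigenspaces are orthogonal, each is finite dimensional, the eigenvalues tend to $+\infty$, and $U_{u_k}=\bigoplus_\lambda\mathcal E_{\eta,k}(\lambda)$. Listing the distinct eigenvalues gives the stated strictly increasing sequence, with multiplicities absorbed into the eigenspaces.

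The main obstacle is the identification of the abstract operator with the formula for $\mathcal L_{\eta,k}$, i.e. checking that the weak eigen-equation tested only against tangential $v$ produces exactly $\omega_{\eta,k}^{-1}P_{u_k}(-\Delta w-S_{u_k}(\nabla u_k)w)=\lambda w$. First I would write $v=P_{u_k}\phi$ for arbitrary smooth $\phi$. Then I would commute $P_{u_k}$ through the Dirichlet term, using $w=P_{u_k}w$ and the symmetry of $P_{u_k}$. Finally, local $W^{2,2}$ regularity follows since $\omega_{\eta,k}$ and $u_k$ are smooth for fixed $k$.
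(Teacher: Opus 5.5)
Your proposal is correct and follows the standard route. The paper does not reprove this lemma but imports it from \cite{DGR25}, where the same classical argument is meant. For fixed $k,\eta$ the weight $\omega_{\eta,k}$ is bounded above and below, so the shifted form is coercive on $V_{u_k}$. Its inverse is compact on $U_{u_k}$ by Rellich, and the spectral theorem together with elliptic regularity yields the Hilbert basis and the decomposition.

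One small point should be stated explicitly: the converse inclusion. Any $w\in V_{u_k}$ solving $\mathcal L_{\eta,k}w=\lambda w$ satisfies $Q_{u_k}(w,v)=\lambda\langle w,v\rangle_{\omega_{\eta,k}}$ for all $v\in V_{u_k}$, hence $Gw=(\lambda+\mu)^{-1}w$. This guarantees that $\Lambda_{\eta,k}$ and the spaces $\mathcal E_{\eta,k}(\lambda)$ coincide exactly with the spectral data of your operator $G$.
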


The following lemma was also proven in Lemma IV.3 in \cite{DGR25}:
\begin{lemma}[Sylvester Law of Inertia]
\label{LEMMA: Ind plus Null eq dim of eigenspaces}
\begin{equation}
\operatorname{Ind}(u_k) + \operatorname{Null}(u_k) = \operatorname{dim}\left( \bigoplus_{\lambda\le 0} \mathcal E_{\eta,k}(\lambda) \right)
\end{equation}
\end{lemma}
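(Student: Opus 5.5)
The plan is to prove the statement by combining the min--max (Courant--Fischer) characterization of the Morse index with the spectral decomposition of Lemma~\ref{LEMMA: Spectral decomp of L}. The weight $\omega_{\eta,k}$ is continuous and bounded above and below by positive constants on $\Sigma_k$ for fixed $k$ and $\eta$. Hence $L^2_{\omega_{\eta,k}}$ and the unweighted $L^2$ carry equivalent norms, and $V_{u_k}$ is dense in $U_{u_k}$ for either. Set
\[
E_- := \bigoplus_{\lambda<0}\mathcal E_{\eta,k}(\lambda), \qquad E_0:=\mathcal E_{\eta,k}(0), \qquad E_+:=\overline{\bigoplus_{\lambda>0}\mathcal E_{\eta,k}(\lambda)}\cap V_{u_k}.
\]
Since $\lambda_j\to\infty$, the spaces $E_-$ and $E_0$ are finite dimensional. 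By elliptic regularity for $\mathcal L_{\eta,k} w=\lambda w$ (the coefficients $\omega_{\eta,k}^{-1}$ and $S_{u_k}(\nabla u_k)$ are smooth and bounded for fixed $k$), they consist of elements of $V_{u_k}$.

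The first step is to check that these three spaces are $Q_{u_k}$-orthogonal. For eigenfunctions $w\in\mathcal E(\lambda)$ and $v\in\mathcal E(\mu)$, the self-adjointness \eqref{EQ: wijenfuNIUD138en9qd} gives
\[
Q_{u_k}(w,v)=\lambda\langle w,v\rangle_{\omega_{\eta,k}},
\]
which vanishes when $\lambda\neq\mu$. For a general $v\in V_{u_k}$ we have $Q_{u_k}(w,v)=\lambda\langle w,v\rangle_{\omega_{\eta,k}}$ by integration by parts against the eigenfunction $w$. Consequently $Q_{u_k}$ is negative definite on $E_-$, vanishes identically on $E_0$, and satisfies $Q_{u_k}(v)=\sum_{\lambda>0}\lambda\|\pi_\lambda v\|^2_{\omega_{\eta,k}}\ge 0$ on $E_+$. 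This last identity uses a Parseval-type expansion of the quadratic form, justified in the form domain $W^{1,2}$ because $Q_{u_k}$ plus a multiple of the weighted norm is coercive. Any $v\in V_{u_k}$ splits as $v=v_-+v_0+v_+$ with the three pieces mutually $Q$- and $\omega$-orthogonal.

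The second step is the index count $\mathrm{Ind}(u_k)=\dim E_-$. The inequality $\ge$ holds because $Q_{u_k}<0$ on $E_-\setminus\{0\}$. For $\le$, suppose $W$ is a subspace on which $Q_{u_k}$ is negative and $\dim W>\dim E_-$. Then some $v\in W\setminus\{0\}$ has $\omega$-orthogonal projection onto $E_-$ equal to zero. For this $v$ we get $Q_{u_k}(v)=Q_{u_k}(v_+)\ge 0$, a contradiction.

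The third step is the nullity count $\mathrm{Null}(u_k)=\dim E_0$. Here $\mathrm{Ker}(Q_{u_k})$ is read as the radical of the bilinear form, that is, the weak solutions of $P_{u_k}(-\Delta w - S_{u_k}(\nabla u_k)w)=0$, which are precisely the solutions of $\mathcal L_{\eta,k}w=0$. Multiplying by the positive function $\omega_{\eta,k}^{-1}$ does not change the zero set, so this kernel is $E_0$. Adding the two counts gives the stated identity.

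The main obstacle is the rigorous justification on $E_+$: showing that $Q_{u_k}$ is nonnegative on the $\omega$-orthogonal complement of $E_-\oplus E_0$ inside $V_{u_k}$. This requires that the eigenfunction expansion converges in the form domain and not just in $L^2_{\omega_{\eta,k}}$. I would handle it with the standard argument: build the operator from the closed, semibounded form $Q_{u_k}+C\|\cdot\|^2_{\omega_{\eta,k}}$, whose compact resolvent comes from Rellich on the closed surface $\Sigma_k$. For this operator, the spectral theorem gives exactly the expansion of $Q_{u_k}$ used above.
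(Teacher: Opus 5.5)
Your proof is correct. The paper gives no argument of its own and simply imports the statement from Lemma IV.3 of \cite{DGR25}. Your route is the standard proof of that fact: use the $\omega_{\eta,k}$-orthonormal eigenbasis of Lemma~\ref{LEMMA: Spectral decomp of L}, show that the projection of any $Q_{u_k}$-negative subspace onto $\bigoplus_{\lambda<0}\mathcal E_{\eta,k}(\lambda)$ is injective, and identify the radical of $Q_{u_k}$ with $\mathcal E_{\eta,k}(0)$.

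The step you flag as the main obstacle is actually elementary. Let $v\in V_{u_k}$ be $\omega_{\eta,k}$-orthogonal to $\bigoplus_{\lambda\le 0}\mathcal E_{\eta,k}(\lambda)$, and let $v_N$ be the partial sums of its eigen-expansion. The weak eigenvalue equations kill the cross term, and $Q_{u_k}(v_N)\ge 0$ since only positive eigenvalues appear, so $Q_{u_k}(v)\ge Q_{u_k}(v-v_N)\ge -C\mu_{\eta,k}\|v-v_N\|^2_{\omega_{\eta,k}}\to 0$. The last inequality is the lower bound from the proof of Lemma~\ref{LEMMA: prop on the seq mu eta k}, which holds for every $w\in V_{u_k}$. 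So you need no Friedrichs extension and no form-domain convergence of the expansion.

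One small correction: $\omega_{\eta,k}$ is only continuous and piecewise smooth, not smooth. This is harmless, since you only use that it is bounded above and below.
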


Set
\begin{equation}
\mu_{\eta,k}\coloneqq \norm{\frac{\abs{\nabla u_k}^2}{\omega_{\eta,k}}}_{L^\infty(\Sigma_k)}.
\end{equation}
Then one has

\begin{lemma}
\label{LEMMA: prop on the seq mu eta k}
\begin{equation}
\begin{aligned}
&\hspace{-6mm} \exists \eta_0>0: \ \exists k_0>0: \ \exists C>0: \\
&i)\ \mu_0\coloneqq \sup_{\eta\in(0,\eta_0)} \sup_{k\ge k_0} \mu_{\eta,k}< \infty,  \\
&ii)\ \forall\eta \in(0,\eta_0): \forall k \ge k_0: \inf \Lambda_{\eta,k} \ge -C\ \mu_{\eta,k}\ge-C\ \mu_0.
\end{aligned}
\end{equation}
\end{lemma}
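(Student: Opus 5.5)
The plan is to prove i) by comparing $|\nabla u_k|^2$ pointwise with the weight $\omega_{\eta,k}$ on the two pieces of $\Sigma_k$ where $\omega_{\eta,k}$ is defined. Then ii) should follow from i) by a one-line Rayleigh quotient argument.

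\textbf{Step 1: the thick part $\Sigma_k\setminus \mathtt P_k^\varrho$.} Here $\omega_{\eta,k}\ge 1$. By the simplifying assumption \eqref{EQ: JINhd2njkd21JNDs82fn28u4fhn2f}, the maps $u_k\circ\tau_k^{-1}$ converge in $C^1_{loc}$ away from the punctures. For fixed $\eta_0$ the thick part $\Sigma_k\setminus \mathtt P_k^{\varrho(\eta)}$ increases as $\eta$ decreases, so a bound uniform in $\eta$ needs care. I will split the thick part into two regions:
\begin{itemize}
\item On the fixed compact set $\Sigma_k\setminus\mathtt P_k^{\varrho(\eta_0)}$, $C^1$ convergence gives $\sup|\nabla u_k|^2\le C$ for $k\ge k_0$.
\item On the intermediate annuli $\mathtt P_k^{\varrho(\eta_0)}\setminus \mathtt P_k^{\varrho(\eta)}$, which correspond conformally to $A(\eta_0,\eta)$, the conformally invariant quantity is $|x|^2|\nabla u_k^\sharp|^2$, which is the density with respect to the cylinder metric. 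This quantity is bounded because $u_\infty$ extends smoothly across the puncture (removable singularity, finite energy), together with $C^1$ convergence on $B_{\eta_0}\setminus\{0\}$ after the identification. Concretely, $|x|^2|\nabla u_\infty^\sharp|^2\le C|x|^2$ near $0$. One can also apply the estimate \eqref{EQ: jUINDUQWn2u323fnu9inufnujf3UJLCNS91edq} on the larger annulus $A(\eta_0,\delta_k)$, whose right-hand side is bounded by a constant depending only on the energy bound and $\Lambda$.
\end{itemize}
Either way, $\sup_{\Sigma_k\setminus \mathtt P_k^\varrho}|\nabla u_k|^2_{h_k}\le C$ uniformly in $\eta<\eta_0$ and $k\ge k_0$.

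\textbf{Step 2: the collar $\mathtt P_k^\varrho$ (the main point).} In cylinder coordinates $|\nabla u_k|^2=|x|^2|\nabla u_k^\sharp|^2$ with $x=e^{-t}e^{i\theta}$. Estimate \eqref{EQ: jUINDUQWn2u323fnu9inufnujf3UJLCNS91edq} gives
\begin{equation}
|\nabla u_k|^2\le C\norm{\nabla u_k^\sharp}_{L^2(A(\eta,\delta_k))}\left[\left(\tfrac{|x|}{\eta}\right)^\beta+\left(\tfrac{\delta_k}{|x|\eta}\right)^\beta\right]+C\frac{\Lambda^2}{T_{\eta,k}^2}.
\end{equation}
The right-hand side is at most $C\max(E_0^{1/2},\Lambda^2)\,\omega_{\eta,k}$, since the three terms of $\omega_{\eta,k}$ on the collar are exactly these three profiles. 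This uses finiteness of $\Lambda$ and is the key place where the hypothesis $\Lambda<\infty$ enters. One subtlety to check is that the $\Lambda$ appearing in the Lemma V.1 estimate is the $k$-dependent length (or the $L^{2,1}$ norm via \eqref{EQ: UNui92efcijf2JNdqfdq321}). Its $\limsup$ is controlled by $\Lambda+1$ for $\eta<\eta_0$ and $k\ge k_0(\eta)$. To get a $k_0$ independent of $\eta$, I would use monotonicity in $\eta$ of the collar length integral, restricting first to $\eta_0$ and noting that smaller $\eta$ gives a subannulus. Combining Steps 1 and 2 gives i).

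\textbf{Step 3: proof of ii).} For $w\in V_{u_k}$,
\begin{equation}
Q_{u_k}(w)\ge -\int|S_{u_k}(\nabla u_k)w\cdot w|\ge -C\int|\nabla u_k|^2|w|^2\ge -C\mu_{\eta,k}\int|w|^2\omega_{\eta,k},
\end{equation}
since the Dirichlet term is nonnegative. Hence the Rayleigh quotient $Q_{u_k}(w)/\norm{w}^2_{L^2_{\omega_{\eta,k}}}$ is bounded below by $-C\mu_{\eta,k}$. Testing on eigenfunctions from Lemma \ref{LEMMA: Spectral decomp of L} then gives $\inf\Lambda_{\eta,k}\ge -C\mu_{\eta,k}\ge -C\mu_0$.

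I expect the main obstacle to be the uniformity in $\eta$ of Step 1 on the intermediate annuli, where neither plain $C^1_{loc}$ convergence nor the collar estimate alone directly applies.
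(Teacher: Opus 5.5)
Your proposal is correct and follows the same route as the paper. The paper also splits $\Sigma_k$ into the thick part, where $\omega_{\eta,k}\ge 1$ and the $C^1_{\mathrm{loc}}$ convergence to $u_\infty$ bounds $|\nabla u_k|^2$, and the collar, where the Lemma V.1 pointwise estimate has exactly the three profiles of $\omega_{\eta,k}$; it then obtains ii) from the Rayleigh quotient evaluated on an eigenfunction.

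Your extra care with uniformity supplies what the paper's iterated $\limsup$ argument leaves implicit. You use the nesting $A(\eta,\delta_k)\subset A(\eta_0,\delta_k)$ to apply the estimate at the fixed scale $\eta_0$ on the intermediate annuli, and the monotonicity of the collar-length integral to fix one $k_0$ valid for all $\eta<\eta_0$. Rely on that second alternative: $C^1_{\mathrm{loc}}$ convergence plus removability of the puncture would not by itself give a $k_0$ independent of $\eta$.
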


\begin{proof}\ \\
\underline{i):} \\
We decompose
\begin{equation}
\mu_{\eta,k} \le \norm{\frac{\abs{\nabla u_k}^2}{\omega_{\eta,k}}}_{L^\infty(\Sigma_k\setminus \mathtt{P}_k^\varrho)} + \norm{\frac{\abs{\nabla u_k}^2}{\omega_{\eta,k}}}_{L^\infty(\mathtt{P}_k^\varrho)}
\end{equation}
Since $\abs{\nabla u_k}^2 = |\nabla u_k^\sharp|^2|x|^2$ we have with \eqref{EQ: jUINDUQWn2u323fnu9inufnujf3UJLCNS91edq} that
\begin{equation}
\limsup_{\eta \searrow0} \limsup_{k \rightarrow \infty}\norm{\frac{\abs{\nabla u_k}^2}{\omega_{\eta,k}}}_{L^\infty(\mathtt{P}_k^\varrho)}
= \limsup_{\eta \searrow0} \limsup_{k \rightarrow \infty}\norm{\frac{|\nabla u_k^\sharp|^2}{\omega_{\eta,k}^\sharp}}_{A(\eta,\delta_k)}
\le C \Lambda^2,
\end{equation}
where we also made use of \eqref{EQ: oINijnfj1093jn131dS12d}.
From the definition of $\omega_{\eta,k}$ in \eqref{EQ: UNuh3nfnjidqmk3r9u2mj096529} and from \eqref{EQ: JINhd2njkd21JNDs82fn28u4fhn2f}, it is clear that
\begin{equation}
\limsup_{k \rightarrow \infty}\norm{\frac{\abs{\nabla u_k}^2}{\omega_{\eta,k}}}_{L^\infty(\Sigma_k\setminus\mathtt{P}_k^\varrho)}
\le \norm{\nabla u_\infty}_{L^\infty(\Sigma)}^2.
\end{equation}
This demonstrates i).\\
\underline{ii):} \\
Let $\lambda \in \Lambda_{\eta,k}$.
Then there exists an eigenvector $0\ne w \in V_{u_k}$ of $\mathcal L_{\eta,k}$ corresponding to the eigenvalue $\lambda$, i.e. $\mathcal L_{\eta,k}(w)=\lambda w$.
We get
\begin{equation}
\begin{aligned}
\lambda \langle w,w \rangle_{\omega_{\eta,k}}
&=  \langle \mathcal L_{\eta,k}(w),w \rangle_{\omega_{\eta,k}} =Q_{u_k}(w) \\
&\ge  - C \int_{\Sigma_k} \abs{\nabla u_k}^2 \abs{w}^2 \ dvol_{\Sigma_k}
\end{aligned}
\end{equation}
Hence,
\begin{equation}
\begin{aligned}
\lambda \langle w,w \rangle_{\omega_{\eta,k}} 
&\ge -C \int_{\Sigma_k}   \abs{\nabla u_k}^2 \abs{w}^2 \ dvol_{\Sigma_k} \\
&\ge -C\ \mu_{\eta,k} \int_{\Sigma_k} \abs{w}^2 \omega_{\eta,k} \ dvol_{\Sigma_k} \\
&= - C\ \mu_{\eta,k} \langle w,w \rangle_{\omega_{\eta,k}}.
\end{aligned}
\end{equation}
This completes the proof of the lemma.
\end{proof}
In the following we focus on the limiting map $u_\infty:\Sigma\rightarrow\mathcal N$ as in \eqref{EQ: JINhd2njkd21JNDs82fn28u4fhn2f} and on the limiting geodesic $\widetilde \gamma_\infty: (0,1)\to\mathcal N$ in the collars as in Proposition \ref{congeo}
We proceed analogous to \cite{DGR25}.
We compute for $w\in V_{u_\infty}$ integrating by parts
\begin{equation}
\begin{aligned}
Q_{u_\infty}(w) 
= \int_{\Sigma} \left( -\Delta w - S_{u_{\infty}}(\nabla u_{\infty})w \right) \cdot w\ dvol_\Sigma.
\end{aligned}
\end{equation}
Note that for any fixed $\eta>0$ (i.e. any fixed $\varrho>0$) we have the pointwise limit
\begin{equation}
\omega_{\eta,k}(q) \rightarrow \omega_{\eta,\infty}(q) \coloneqq \begin{cases}
1, &\text{ if } q\in \Sigma \setminus \mathtt P_\infty^\varrho \\
\frac{\exp(-\beta\ dist(q,\Sigma\setminus \mathtt P_\infty^\varrho))}{\eta^\beta}, &\text{ if } q\in \mathtt P_\infty^\varrho
\end{cases}, \qquad \text{ as } k \rightarrow \infty,
\end{equation}
where here $\mathtt P_\infty^\varrho$ is obtained by pulling back the domain $\mathtt P_k^\varrho$ under $\tau_k$ to $\Sigma$ and sending $k\to \infty$ (Here we are using as usual the notations as in \eqref{EQ:DM-convergence}).
We would also like to remark that for $\eta>0$ small (i.e. $\varrho>0$ small) the domain $\mathtt P_\infty^\varrho$ naturally decomposes in to 2 connected components which are divided by the puncture (collapsed geodesic). 
We introduce 
\begin{equation}
\mathcal L_{\eta,\infty}: V_{u_{\infty}} \rightarrow V_{u_{\infty}};
\quad\mathcal L_{\eta,\infty}(w) \coloneqq \omega_{\eta,\infty}^{-1} \ P_{u_\infty} \left(- \Delta w  - S_{u_{\infty}}(\nabla u_{\infty})w  \right), 
\end{equation}
such that
\begin{equation}
Q_{u_\infty}(w)= \langle \mathcal L_{\eta,\infty} w, w \rangle_{\omega_{\eta,\infty}},
\end{equation}
where we used 
\begin{equation}
\langle w,v \rangle_{\omega_{\eta,\infty}} \coloneqq \int_{\Sigma} w \cdot v \ \omega_{\eta,\infty} \ dvol_{\Sigma}.
\end{equation}
As above a simple integration by parts shows that for any $w\in V_{\widetilde\gamma_\infty}$
\begin{equation}
\begin{aligned}
Q_{\widetilde\gamma_\infty}(w)
&= \int_0^1 \left(- \partial_t^2 w  - S_{\widetilde\gamma_\infty}(\partial_t \widetilde\gamma_\infty)w \right) \cdot w  \ dt.
\end{aligned}
\end{equation}
Define 
\begin{equation}
\widetilde \omega_{\infty}(t) \coloneqq 1, \qquad t\in[0,1].
\end{equation}
To $\sigma\in(0,1/2)$ and $\eta>0$ one has for any $t\in(\sigma,1-\sigma)$ the pointwise limit
\begin{equation}
T_{\eta,k}^2\ \widetilde \omega_{\eta,k}(t) \to \widetilde \omega_{\infty} (t),
\qquad \text{ as } k \rightarrow \infty.
\end{equation}
We introduce 
\begin{equation}
\label{EQ: JNUn2u3fn239fn3f2n}
\widetilde{\mathcal L}_{\infty}: V_{\widetilde\gamma_\infty} \rightarrow V_{\widetilde\gamma_\infty}; 
\qquad\widetilde{\mathcal L}_{\infty}(w)
\coloneqq \widetilde \omega_{\infty}^{-1}\ P_{\widetilde\gamma_\infty} \left(- \partial_s^2 w  -S_{\widetilde\gamma_\infty}(\partial_s \widetilde\gamma_\infty)w \right),
\end{equation}
such that 
\begin{equation}
Q_{\widetilde\gamma_\infty}(w)= \langle \widetilde{\mathcal L}_{\infty} w, w \rangle_{\widetilde\omega_{\infty}},
\end{equation}
where we used 
\begin{equation}
\langle w,v \rangle_{\widetilde\omega_{\infty}} \coloneqq \int_{0}^1 w \cdot v \ \widetilde\omega_{\infty} \ dt.
\end{equation}
Let
\begin{equation}
U_{u_\infty} = \left\{ w \in L^{2}_{\omega_{\eta,\infty}}(\Sigma; \R^{m}) \forwhich w(x) \in T_{u_\infty(x)}\mathcal N, \quad \text{for a.e. } x\in\Sigma \right\},
\end{equation}
and 
\begin{equation}
U_{\widetilde\gamma_\infty} = \left\{ w \in L^{2}_{\widetilde\omega_{\infty}}((0,1); \R^{m}) \forwhich w(0)=w(1)=0,\ w(x) \in T_{\widetilde\gamma_\infty}\mathcal N, \quad \text{for a.e. } x\in (0,1) \right\}.
\end{equation}
In Lemma IV.5 of \cite{DGR25} the following result was shown:

\begin{lemma}
\begin{enumerate}
\item The separable Hilbert space $(U_{u_\infty}, \langle\cdot,\cdot\rangle_{\omega_{\eta,\infty}})$ has a Hilbert basis consisting of eigenfunctions of $\mathcal L_{\eta,\infty}$.
\item The separable Hilbert space $(U_{\widetilde\gamma_\infty}, \langle\cdot,\cdot\rangle_{\widetilde \omega_{\infty}})$ has a Hilbert basis consisting of eigenfunctions of $\widetilde{\mathcal L}_{\infty}$.
\end{enumerate}
\end{lemma}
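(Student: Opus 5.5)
Both parts are spectral-theorem statements for self-adjoint operators with compact resolvent on weighted $L^2$ spaces, and I would prove them in parallel. Take part 1 first. On $\Sigma$ (the limiting surface, with the punctures removable) the weight $\omega_{\eta,\infty}$ is bounded above and below by positive constants for fixed $\eta$: it equals $1$ off $\mathtt P^\varrho_\infty$ and $\eta^{-\beta}e^{-\beta\,\mathrm{dist}}$ inside. One has to be careful here, since on the cusp ends the distance is unbounded. I would handle this by working in the conformal (punctured disc) picture. There the relevant weight is $\omega^\sharp = |x|^{-2}\omega$, which behaves like $|x|^{\beta-2}\eta^{-\beta}$ near the puncture and is integrable since $\beta>0$.

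With this in hand, the steps are as follows.
\begin{enumerate}
\item Consider the closed, symmetric bilinear form
\[
B(w,v)=Q_{u_\infty}(w,v)+K\langle w,v\rangle_{\omega_{\eta,\infty}}
\]
on $V_{u_\infty}\subset U_{u_\infty}$.
\item Show that for $K$ large, $B$ is coercive with respect to the $W^{1,2}$ norm. This uses $|S_{u_\infty}(\nabla u_\infty)|\le C|\nabla u_\infty|^2$ together with $|\nabla u_\infty|^2\le C\,\omega_{\eta,\infty}$. The latter is the $k\to\infty$ limit of Lemma~\ref{LEMMA: prop on the seq mu eta k}~i), and follows from the bound $\mu_0<\infty$ passed to the limit.
\item Show that the embedding $W^{1,2}(\overline\Sigma)\hookrightarrow L^2_{\omega_{\eta,\infty}}$ is compact. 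Since $\omega^\sharp\in L^p$ near the puncture for some $p>1$, this follows from Rellich's theorem plus Hölder's inequality, using $W^{1,2}\subset L^q$ for all $q<\infty$ in two dimensions.
\end{enumerate}
Lax--Milgram then gives a bounded inverse $(\mathcal L_{\eta,\infty}+K)^{-1}:U_{u_\infty}\to V_{u_\infty}$. Composed with the compact embedding, this yields a compact, self-adjoint (with respect to $\langle\cdot,\cdot\rangle_{\omega_{\eta,\infty}}$), positive operator on $U_{u_\infty}$. Its image is dense because $V_{u_\infty}$ is dense in $U_{u_\infty}$: approximate in $L^2_\omega$ by smooth sections and project with $P_{u_\infty}$. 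The Hilbert--Schmidt spectral theorem then provides the Hilbert basis of eigenfunctions. These are also eigenfunctions of $\mathcal L_{\eta,\infty}$, since $(\mathcal L+K)^{-1}e=\nu e$ gives $\mathcal L e=(\nu^{-1}-K)e$.

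Part 2 follows the same scheme and is simpler. The weight is $\widetilde\omega_\infty\equiv1$ and the interval $[0,1]$ is compact. $\widetilde\gamma_\infty$ is a geodesic, hence smooth, so $S_{\widetilde\gamma_\infty}(\partial_s\widetilde\gamma_\infty)$ is bounded; this uses the $C^1$ bounds of Proposition~\ref{congeo} and Proposition~\ref{PROP: BD on dell s tilde u}, and extends to the closed interval since a geodesic on a compact interval is smooth. The form $Q_{\widetilde\gamma_\infty}+K\|\cdot\|^2_{L^2}$ is coercive on $V_{\widetilde\gamma_\infty}=W^{1,2}_0$. The embedding $W^{1,2}_0\hookrightarrow L^2$ is compact, and the same argument applies. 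The projection $P_{\widetilde\gamma_\infty}$ keeps everything in the subbundle, and self-adjointness comes from the symmetry of $S$ established in \eqref{EQ: iwenf8932injIUJN128ee12}.

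The main obstacle I expect is the compactness and coercivity near the punctures in part 1. One must check that the weighted space is genuinely $L^2$ against an integrable weight on the compactified surface, and that $|\nabla u_\infty|^2/\omega_{\eta,\infty}$ stays bounded there. For the latter, my first attempt would be to pass to the limit in the pointwise estimate \eqref{EQ: jUINDUQWn2u323fnu9inufnujf3UJLCNS91edq}. Alternatively, one can use that $u_\infty$ extends smoothly across the punctures by removability, so $|\nabla u_\infty^\sharp|$ is bounded while $\omega^\sharp\sim|x|^{\beta-2}\to\infty$.
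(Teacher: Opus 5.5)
Your proof is correct. The paper gives no argument of its own here and simply cites Lemma IV.5 of \cite{DGR25}; your compact-resolvent route (Lax--Milgram for $Q+K\langle\cdot,\cdot\rangle_{\omega}$, compactness of $W^{1,2}(\overline\Sigma)\hookrightarrow L^2_{\omega_{\eta,\infty}}$ from $\omega^\sharp\sim\eta^{-\beta}|x|^{\beta-2}\in L^p$ with $p>1$ near the punctures, and the one-dimensional analogue on $W^{1,2}_0$ for the geodesic) is the standard argument for such weighted eigenvalue problems and supplies what that citation asserts.

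Two phrasings need tightening. First, density of $V_{u_\infty}$ in $U_{u_\infty}$ directly gives that the resolvent has trivial kernel, and dense range then follows by self-adjointness. Second, coercivity in the $W^{1,2}(\overline\Sigma)$ norm comes from the weighted measure dominating a smooth area form near the punctures; it does not come from $\omega_{\eta,\infty}$ being bounded below on the cylindrical ends, which it is not.
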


\noindent
We continue by introducing the limiting eigenspaces 
\begin{equation}
\mathcal E_{\eta,\infty}(\lambda) \coloneqq \{ w \in V_{u_\infty} \ ; \ \mathcal L_{\eta,\infty}(w)=\lambda w \},
\qquad
\widetilde{\mathcal E}_{\infty}(\lambda) \coloneqq \{ w \in V_{\widetilde\gamma_\infty} \ ; \ \widetilde{\mathcal L}_{\infty}(w)=\lambda w \}.
\end{equation}
And their nonpositive contribution
\begin{equation}
\mathcal E_{\eta,\infty}^0\coloneqq \bigoplus_{\lambda \le 0} \mathcal E_{\eta,\infty}(\lambda), \qquad
\widetilde{\mathcal E}_{\infty}^0\coloneqq \bigoplus_{\lambda \le 0} \widetilde{\mathcal E}_{\infty}(\lambda).
\end{equation}
In \cite{DGR25} in (IV.12) the following result was shown:
\begin{lemma}
\label{LEMMA: Ind plus Null eq dim of eig for infty lim func}
\begin{equation}
\begin{aligned}
&i)\ \dim\left(\mathcal E_{\eta,\infty}^0\right) 
\le \operatorname{Ind}(u_\infty) + \operatorname{Null}(u_\infty), \\
&ii)\ \dim\left(\widetilde{\mathcal E}_{\infty}^0\right) 
\le \operatorname{Ind}(\widetilde\gamma_\infty) + \operatorname{Null}(\widetilde\gamma_\infty),
\end{aligned}
\end{equation}
\end{lemma}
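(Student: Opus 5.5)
The statement is a pure linear-algebra comparison between eigenspaces of a weighted operator and the unweighted index form. I will prove i) and ii) in the same way. For i), let $w\in \mathcal E^0_{\eta,\infty}$ be nonzero and write it in the Hilbert basis of eigenfunctions from the preceding lemma, $w=\sum_{\lambda\le 0} w_\lambda$ with $w_\lambda\in\mathcal E_{\eta,\infty}(\lambda)$. Since only finitely many eigenvalues are $\le 0$, the sum is finite.

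First, the eigenspaces are mutually orthogonal for $\langle\cdot,\cdot\rangle_{\omega_{\eta,\infty}}$ and are also $Q_{u_\infty}$-orthogonal. Indeed, for $\lambda\neq\lambda'$,
\begin{equation}
Q_{u_\infty}(w_\lambda,w_{\lambda'})=\langle \mathcal L_{\eta,\infty}w_\lambda,w_{\lambda'}\rangle_{\omega_{\eta,\infty}}=\lambda\langle w_\lambda,w_{\lambda'}\rangle_{\omega_{\eta,\infty}}=0 .
\end{equation}
This uses the symmetric bilinear form associated with $Q_{u_\infty}$ and self-adjointness.

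Hence
\begin{equation}
Q_{u_\infty}(w)=\sum_{\lambda\le0}\lambda\|w_\lambda\|^2_{\omega_{\eta,\infty}}\le 0 .
\end{equation}
Equality holds only if $w\in\mathcal E_{\eta,\infty}(0)$. Moreover, each $w\in\mathcal E_{\eta,\infty}(0)$ lies in $\operatorname{Ker}Q_{u_\infty}$: for any $v\in V_{u_\infty}$,
\begin{equation}
Q_{u_\infty}(w,v)=\langle\mathcal L_{\eta,\infty}w,v\rangle_{\omega_{\eta,\infty}}=0 .
\end{equation}

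Now split $\mathcal E^0_{\eta,\infty}=N\oplus K$, where $N=\bigoplus_{\lambda<0}\mathcal E_{\eta,\infty}(\lambda)$ and $K=\mathcal E_{\eta,\infty}(0)$. On $N\setminus\{0\}$ we have $Q_{u_\infty}<0$ strictly, so $\dim N\le \operatorname{Ind}(u_\infty)$ by the definition of the index as a supremum. Also $K\subset\operatorname{Ker}Q_{u_\infty}$, so $\dim K\le\operatorname{Null}(u_\infty)$. Adding the two bounds gives i).

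For ii) I repeat the argument verbatim with $\widetilde{\mathcal L}_\infty$, $\widetilde\omega_\infty\equiv1$ and $V_{\widetilde\gamma_\infty}$ (the space with Dirichlet boundary conditions). Here self-adjointness follows from integrating by parts in $s$, and the boundary terms vanish.

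The only delicate point is that the eigenfunctions genuinely lie in the energy spaces $V_{u_\infty}$ and $V_{\widetilde\gamma_\infty}$. This is needed so that $Q$ is defined on them and the bilinear identity holds. This matters in particular for $u_\infty$, because $\omega_{\eta,\infty}$ degenerates near the punctures and behaves like $e^{-\beta\,\mathrm{dist}}$ in the cylindrical end. I take this from the definitions: the eigenspaces were defined as subsets of $V_{u_\infty}$ and $V_{\widetilde\gamma_\infty}$, with the weighted form $\langle\mathcal L w,v\rangle_\omega=Q(w,v)$ holding in the weak sense on $V$. With that, no further estimate is required.
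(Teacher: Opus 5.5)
Your proof is correct: splitting $\mathcal E^0_{\eta,\infty}$ into $\bigoplus_{\lambda<0}\mathcal E_{\eta,\infty}(\lambda)$, on which $Q_{u_\infty}$ is negative definite because distinct eigenspaces are $Q$-orthogonal, and $\mathcal E_{\eta,\infty}(0)\subset\operatorname{Ker}Q_{u_\infty}$ is the easy half of the Sylvester-inertia argument. The paper gives no proof of its own and cites (IV.12) of \cite{DGR25}, which rests on this same reasoning; the identical argument with $\widetilde\omega_\infty\equiv 1$ and Dirichlet conditions gives ii). Your claim that only finitely many eigenvalues are $\le 0$ is not needed, since every element of the algebraic direct sum is already a finite sum.
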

We consider the unit sphere (finite dimensional as the ambient space is finite dimensional) given by
\begin{equation}
\mathcal{S}_{\eta, k}^0:=\left\{w \in \bigoplus_{\lambda \leq 0} \mathcal{E}_{\eta, k}(\lambda) ;\langle w, w\rangle_{\omega_{\eta, k}}=1\right\} .
\end{equation}

\begin{lemma} \label{LEMMA: On the non zero of the lim functs}
For any $k \in \mathbb{N}$ let $w_k \in \mathcal{S}_{\eta, k}^0$. Then there exist a subsequence such that
\begin{equation}
\begin{gathered}
w_k \rightharpoonup w_{\infty}, \text { weakly in } W^{1,2}(\Sigma) \cap W_{\text {loc }}^{2,2}(\Sigma_k \backslash\{ \gamma_n \}) \\
\overline w_k \left(s,\theta\right) \coloneqq \frac{1}{\sqrt{T_{\eta,k}}}\ w_k \circ \Phi^{-1} (r(s),\theta) \rightharpoonup v_{\infty}(s), \text { weakly in } W_{\text {loc }}^{2,2}((0,1)\times S^1)
\end{gathered}
\end{equation}
and
\begin{equation}
\text { either } w_{\infty} \neq 0, \quad \text { or } v_{\infty} \neq 0 ,
\end{equation}
where $W_{\text {loc }}^{2,2}$-convergence is of course to be understood after pulling back the metric as in \eqref{EQ:DM-convergence},
namely $w_k\circ \tau_k^{-1} $ converges locally to $w_{\infty}$ in $\tilde\Sigma\setminus\{(\xi_1,\xi_2)\}$.
\end{lemma}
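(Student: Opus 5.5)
The plan is to get uniform bounds from the nonpositivity of $Q_{u_k}$ on $\mathcal S^0_{\eta,k}$, pass to weak limits on the two scales, and then argue by contradiction. If both $w_\infty$ and $v_\infty$ vanished, all of the weighted mass would have to sit in the connecting necks, and Theorem~\ref{THEOREM: Pos in conn necks} forbids this.

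\textbf{Uniform bounds and limits.} Since $w_k$ lies in the span of eigenspaces with $\lambda\le 0$, we have $Q_{u_k}(w_k)\le 0$. Hence, by Lemma~\ref{LEMMA: prop on the seq mu eta k},
\begin{equation}
\int_{\Sigma_k}|\nabla w_k|^2\le C\int_{\Sigma_k}|\nabla u_k|^2|w_k|^2\le C\mu_0\langle w_k,w_k\rangle_{\omega_{\eta,k}}=C\mu_0 .
\end{equation}
Write $w_k=\sum_i w_k^i$ with $\mathcal L_{\eta,k}w_k^i=\lambda_i w_k^i$ and $\lambda_i\in[-C\mu_0,0]$. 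Then $-\Delta w_k=S_{u_k}(\nabla u_k)w_k+\omega_{\eta,k}\sum_i\lambda_i w_k^i$ modulo normal components. The right-hand side is locally bounded in $L^2$ away from the collapsing geodesic, using the $C^1_{loc}$ convergence \eqref{EQ: JINhd2njkd21JNDs82fn28u4fhn2f}. This gives the weak $W^{1,2}\cap W^{2,2}_{loc}$ convergence to some $w_\infty$, with strong $L^2_{loc}$ convergence by Rellich.

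In the collar I pass to the coordinates $(s,\theta)$. A direct computation gives
\begin{equation}
\int_{A(\eta,\delta_k)}|\nabla w_k^\sharp|^2dx=\int_0^1\!\!\int_{S^1}\bigl(|\partial_s\overline w_k|^2+T_{\eta,k}^2|\partial_\theta\overline w_k|^2\bigr)\,d\theta\,ds .
\end{equation}
The weighted mass on the collar equals $\int|\overline w_k|^2\,T_{\eta,k}^2\widetilde\omega_{\eta,k}\,ds\,d\theta$, and $T_{\eta,k}^2\widetilde\omega_{\eta,k}\to1$ on $[\sigma,1-\sigma]$. So $\overline w_k$ is bounded in $W^{1,2}([\sigma,1-\sigma]\times S^1)$ and $\partial_\theta\overline w_k\to0$ in $L^2$. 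The limit $v_\infty$ is therefore independent of $\theta$. Interior $W^{2,2}$ bounds follow from the rescaled equation, exactly as $\widetilde u_k$ was treated in Lemma~\ref{LEMMA: Hgher ordr bds on u}, using Propositions~\ref{estdertheta} and~\ref{PROP: BD on dell s tilde u} to control the coefficients. A diagonal argument over $\sigma\to0$ gives a single subsequence.

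\textbf{Nonvanishing.} Suppose $w_\infty=0$ and $v_\infty=0$. Fix a small $\sigma$ and split $\Sigma_k$ into three regions: the thick part $\Sigma_k\setminus\mathtt P_k^\varrho$ together with a slightly larger compact piece, the middle collar $s\in[\sigma,1-\sigma]$, and the two connecting necks $\Omega$ of Theorem~\ref{THEOREM: Pos in conn necks}. By strong $L^2$ convergence, and since the weight is comparable to the limiting weight on the first two regions, the weighted mass of $w_k$ there tends to $0$. I then choose a partition $\chi_1^2+\chi_2^2=1$, with $\chi_1$ supported in the necks. Its transitions are placed inside the two vanishing-mass regions: at $s\approx\sigma$ the cutoff is linear in $s$ on a width of order $\sigma$, and near $|x|=\eta$ it is a fixed cutoff. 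The IMS localization formula gives
\begin{equation}
0\ge Q_{u_k}(w_k)=Q_{u_k}(\chi_1w_k)+Q_{u_k}(\chi_2w_k)-\sum_i\int|\nabla\chi_i|^2|w_k|^2 .
\end{equation}
Theorem~\ref{THEOREM: Pos in conn necks} gives $Q_{u_k}(\chi_1w_k)\ge\bar\kappa\int|\chi_1w_k|^2\omega_{\eta,k}$. Moreover $Q_{u_k}(\chi_2w_k)\ge -C\mu_0\int_{\mathrm{supp}\chi_2}|w_k|^2\omega_{\eta,k}\to0$. Consequently the neck mass tends to $0$ as well, which contradicts $\langle w_k,w_k\rangle_{\omega_{\eta,k}}=1$.

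\textbf{Main obstacle.} The delicate step is the cutoff error $\int|\nabla\chi_i|^2|w_k|^2$. In the middle collar, $|\nabla\chi|^2$ in the $x$-variable is of size $\sigma^{-2}T_{\eta,k}^{-2}|x|^{-2}$. This is comparable to $\sigma^{-2}\omega^\sharp_{\eta,k}$ there, so the error is at most $C\sigma^{-2}$ times the weighted mass in the transition strip. That mass tends to $0$ as $k\to\infty$ for fixed $\sigma$ by the strong convergence of $\overline w_k$ to $v_\infty=0$, so the order of limits ($k\to\infty$ first, then $\eta,\sigma$) must be respected. Near $|x|=\eta$ the cutoff is fixed and $w_k\to0$ strongly there, so that error also vanishes.
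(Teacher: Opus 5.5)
Your proposal is correct. It has the same architecture as the paper: bounds from $Q_{u_k}(w_k)\le 0$ and Lemma \ref{LEMMA: prop on the seq mu eta k}, limits on both scales, and a contradiction via Theorem \ref{THEOREM: Pos in conn necks} on the connecting necks. The localization step, however, is done differently.

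The paper cuts off with a single $\xi_k=\chi w_k$ and shows $Q_{u_k}(w_k)-Q_{u_k}(\xi_k)\to 0$ region by region. Since the discarded part enters through $\int|\nabla w_k|^2$, this needs the gradients to vanish strongly outside the necks:
$\nabla w_k\to 0$ in $L^2$ on the thick part and on $\mathtt O$;
$\partial_s\overline w_k\to 0$ in $L^2$ on $\mathtt M$, which comes from the $W^{2,2}_{loc}$ compactness;
$T_{\eta,k}^2\|\partial_\theta\overline w_k\|_{L^2(\mathtt M)}^2\to 0$, which is what the bound $\|\partial_\theta\overline w_k\|\le C/T_{\eta,k}^2$ of claim (iv) is for.

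Your IMS identity cancels the cross terms exactly. You therefore only need the weighted $L^2$ mass of $w_k$ outside the necks to vanish, and this follows from the $W^{1,2}$ bounds and Rellich alone. Your observation $|\nabla\chi|^2\le\sigma^{-2}\omega^\sharp_{\eta,k}$ on the transition strip is exactly what makes the localization error harmless for fixed $\sigma$. So your route makes the nonvanishing independent of the higher-order estimates; these are then needed only for the $W^{2,2}_{loc}$ convergence asserted in the statement.

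Two small fixes.

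First, the collar $W^{2,2}_{loc}$ bound does not follow ``exactly as'' for $\widetilde u_k$ in Lemma \ref{LEMMA: Hgher ordr bds on u}, which was a pointwise argument. You need, as in the paper, an $L^2$ bound on $\partial_s^2\overline w_k+T_{\eta,k}^2\partial_\theta^2\overline w_k$. The tangential part comes from the eigenvalue equation together with $\sum_j(c_k^j\lambda_k^j)^2\le(\inf\Lambda_{\eta,k})^2$. The normal part comes from differentiating $P_{\widetilde u_k}\overline w_k=\overline w_k$. You then conclude with the anisotropic integration-by-parts identity for $\zeta\overline w_k$.

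Second, take $\chi_1=\cos\phi$ and $\chi_2=\sin\phi$ with $\phi$ linear in $s$, rather than $\chi_1$ itself linear. Otherwise $\chi_2=\sqrt{1-\chi_1^2}$ is not Lipschitz where $\chi_1\to 1$, and $\int|\nabla\chi_2|^2|w_k|^2$ can diverge.
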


\begin{proof}
We are going to show the successively the following list of claims:

\begin{enumerate}
\item $\norm{\overline w_k}_{L^2((0,1)\times S^1)}^2 \le 1, \qquad \norm{\partial_s\overline w_k}_{L^2((0,1)\times S^1)}^2 \le \mu_0 \qquad \norm{\partial_\theta\overline w_k}_{L^2((0,1)\times S^1)}^2 \le \frac{\mu_0}{T_{\eta,k}^2}$
\item $\norm{\partial_s^2\overline w_k + T_{\eta,k}^2 \partial_\theta^2 \overline w_k}_{L^2((\sigma,1-\sigma)\times S^1)} \le C_{\sigma,\eta}$
\item $\|\partial_s^2\overline w_k\|_{L^2((\sigma,1-\sigma)\times S^1)}
+\|T_{\eta,k}\partial_s\partial_\theta\overline w_k\|_{L^2((\sigma,1-\sigma)\times S^1)}
+\|T_{\eta,k}^2\partial_\theta^2\overline w_k\|_{L^2((\sigma,1-\sigma)\times S^1)}
\le C_{\sigma,\eta}$
\item $\norm{\partial_\theta \overline w_k}_{L^2((\sigma,1-\sigma)\times S^1)} \le \frac{C_{\sigma,\eta}}{T_{\eta,k}^2} $
\item The Lemma itself.
\end{enumerate}
{\underline {\bf (i):}}
Using $ds=\frac{dr}{rT_{\eta,k}}$
\begin{equation}
\begin{aligned}
\norm{\overline w_k}_{L^2((0,1)\times S^1)}^2
&= \int_{S^1} \int_0^1 |\overline w_k|^2 ds \ d\theta \\
&= \int_{S^1} \int_{\delta_k/\eta}^\eta |w_k\circ \Phi^{-1}|^2 \frac{dr}{rT_{\eta,k}^2} \ d\theta \\
&=  \int_{A(\eta,\delta_k)} |w_k\circ \Phi^{-1}|^2 \frac{1}{T_{\eta,k}^2|x|^2} dx \\
&=  \int_{\mathtt P_k^\varrho} |w_k|^2 \frac{1}{T_{\eta,k}^2}\ dvol_{h_k} \\
&\le  \int_{\mathtt P_k^\varrho} |w_k|^2 \omega_{\eta,k} \ dvol_{h_k} \\
&\le  \int_{\Sigma_k} |w_k|^2 \omega_{\eta,k} \ dvol_{h_k}=1.
\end{aligned}
\end{equation}
Using $\partial_s \overline w_k = \frac{1}{\sqrt{T_{\eta,k}}} \partial_r (w_k\circ \Phi^{-1}) \ T_{\eta,k} r$ and $\partial_r (w_k\circ \Phi^{-1}) = \partial_t w_k/r$
\begin{equation}
\begin{aligned}
\norm{\partial_s\overline w_k}_{L^2((0,1)\times S^1)}^2
&= \int_{S^1} \int_0^1 |\partial_s\overline w_k|^2 ds \ d\theta \\
&= \int_{S^1} \int_{\delta_k/\eta}^\eta |\partial_r(w_k\circ \Phi^{-1})|^2\ r \ dr \ d\theta \\
&=  \int_{A(\eta,\delta_k)} |\partial_r(w_k\circ \Phi^{-1})|^2  dx \\
&= \int_{\mathtt P_k^{\varrho}} |\partial_t w_k|^2 dvol_{h_k} \\
&\le \int_{\mathtt P_k^{\varrho}} |\nabla w_k|^2 dvol_{h_k} \\
&\le \int_{\Sigma_k} |\nabla w_k|^2 dvol_{h_k} \\
&= \underbrace{Q_{u_k}(w_k)}_{\le 0} + \int_{\Sigma_k}\mathbb A_{u_k}(\nabla u_k,\nabla u_k)\mathbb A_{u_k}(w_k,w_k)\ dvol_{h_k} \\
&\le \int_{\Sigma_k} |w_k|^2\ \omega_{\eta,k}\ dvol_{h_k} \norm{\frac{\abs{\nabla u_k}^2}{\omega_{\eta,k}}}_{L^\infty(\Sigma_k)} \\
&\le \mu_0.
\end{aligned}
\end{equation}
Similar,
\begin{equation}
\begin{aligned}
\norm{\partial_\theta\overline w_k}_{L^2((0,1)\times S^1)}^2
&= \int_{S^1} \int_0^1 |\partial_\theta\overline w_k|^2 ds \ d\theta \\
&= \int_{S^1} \int_{\delta_k/\eta}^\eta |\partial_\theta(w_k\circ \Phi^{-1})|^2\ \frac{dr}{rT_{\eta,k}^2} \ d\theta \\
&=  \int_{A(\eta,\delta_k)} |\partial_\theta(w_k\circ \Phi^{-1})|^2 \frac{1}{T_{\eta,k}^2|x|^2} dx \\
&= \int_{\mathtt P_k^{\varrho}} |\partial_\theta w_k|^2 \frac{1}{T_{\eta,k}^2} dvol_{h_k} \\
&\le \frac{1}{T_{\eta,k}^2} \int_{\mathtt P_k^{\varrho}} |\nabla w_k|^2 dvol_{h_k} \\
&\le \frac{1}{T_{\eta,k}^2} \int_{\Sigma} |\nabla w_k|^2 dvol_{h_k} \\
&= \frac{1}{T_{\eta,k}^2} \left( \underbrace{Q_{u_k}(w_k)}_{\le 0} + \int_{\Sigma}\mathbb A_{u_k}(\nabla u_k,\nabla u_k)\mathbb A_{u_k}(w_k,w_k)\ dvol_{h_k} \right) \\
&\le \frac{1}{T_{\eta,k}^2}\int_{\Sigma} |w_k|^2\ \omega_{\eta,k}\ dvol_{h_k} \norm{\frac{\abs{\nabla u_k}^2}{\omega_{\eta,k}}}_{L^\infty(\Sigma)} \\
&\le \frac{\mu_0}{T_{\eta,k}^2}.
\end{aligned}
\end{equation}
\\ {\underline {\bf (ii):}}
To $k\in \N$ let here $(\phi_{k}^j)_{j=1,\dots,N_k}$ be an orthonormal basis of $\oplus_{\lambda \leq 0} \mathcal{E}_{\eta, k}(\lambda)$.
Then we can find some coefficients $c_k^1,\dots,c_k^{N_k}$ with 
\begin{equation}
w_k=\sum_{j=1}^{N_k} c_k^j \phi_k^j, \qquad \sum_{j=1}^{N_k}(c_k^j)^2=1.
\end{equation}
Letting as usual $\overline \phi_k^j= \widetilde \phi_k^j/\sqrt{{T_{\eta,k}}}$ one has
\begin{equation}
- \ \widetilde \omega_{\eta,k}^{-1} \ P_{\widetilde u_k} \left[
\frac{1}{T_{\eta,k}^2}\partial_s^2\overline w_k
+\partial_\theta^2 \overline w_k
+ S_{\widetilde u_k}\left(\frac{\partial_s \widetilde u_k}{T_{\eta,k}},\partial_\theta\widetilde u_k \right) \overline w_k
\right] = \sum_{j=1}^{N_k} c_k^j \lambda_k^j\ \overline \phi_k^j,
\end{equation}
where the operator $S$ is as in \eqref{EQ: iwenf8932injIUJN128ee12}.
Bound
\begin{equation}
\label{EQ: HUBZNu82hfnuj2fuj2df22dulkz6}
\begin{aligned}
& \norm{\partial_s^2\overline w_k + T_{\eta,k}^2 \partial_\theta^2 \overline w_k}_{L^2((\sigma,1-\sigma)\times S^1)}\\
& \le \norm{P_{\widetilde u_k} \left[\partial_s^2\overline w_k + T_{\eta,k}^2 \partial_\theta^2 \overline w_k\right]}_{L^2((\sigma,1-\sigma)\times S^1)}
+ \norm{(id-P_{\widetilde u_k}) \left[\partial_s^2\overline w_k + T_{\eta,k}^2 \partial_\theta^2 \overline w_k\right]}_{L^2((\sigma,1-\sigma)\times S^1)}
\end{aligned}
\end{equation}
First,
\begin{equation}
\label{EQ: JUNIUnu23nfd23nsjd2jnuuuujgr3v}
\begin{aligned}
&\norm{P_{\widetilde u_k} \left[\partial_s^2\overline w_k + T_{\eta,k}^2 \partial_\theta^2 \overline w_k\right]}_{L^2((\sigma,1-\sigma)\times S^1)} \\
& \le \norm{\widetilde \omega_{\eta,k} T_{\eta,k}^2 \sum_{j=1}^{N_k} c_k^j \lambda_k^j\ \overline \phi_k^j}_{L^2((\sigma,1-\sigma)\times S^1)}
+ \norm{S_{\widetilde u_k}\left(\partial_s \widetilde u_k,T_{\eta,k}\partial_\theta\widetilde u_k \right)  \overline w_k}_{L^2((\sigma,1-\sigma)\times S^1)} \\
& \le \norm{\widetilde \omega_{\eta,k} T_{\eta,k}^2}_{L^\infty((\sigma,1-\sigma)\times S^1)} \norm{\sum_{j=1}^{N_k} c_k^j \lambda_k^j\ \overline \phi_k^j}_{L^2((\sigma,1-\sigma)\times S^1)}\\
& \hspace{20mm}+ C\norm{\lvert \partial_s \tilde u_k\rvert^2+T_{\eta,k}^2 \lvert \partial_\theta \tilde u_k\rvert^2}_{L^\infty((\sigma,1-\sigma)\times S^1)} \norm{\overline w_k}_{L^2((\sigma,1-\sigma)\times S^1)} \\
& \le C_{\sigma,\eta},
\end{aligned}
\end{equation}
where we also used
\begin{equation}
\begin{aligned}
\norm{\sum_{j=1}^{N_k} c_k^j \lambda_k^j\ \overline \phi_k^j}_{L^2((\sigma,1-\sigma)\times S^1)}^2
& \le \frac{1}{T_{\eta,k}^2} \norm{\sum_{j=1}^{N_k} c_k^j \lambda_k^j\ \phi_k^j}_{L^2(\Sigma_k)}^2 \\
& \le \norm{\sum_{j=1}^{N_k} c_k^j \lambda_k^j\ \phi_k^j}_{L^2_{\omega_{\eta,k}}(\Sigma_k)}^2 \\
&= \sum_{j=1}^{N_k} (c_k^j\lambda_k^j)^2 \\
&\le (\inf \Lambda_{\eta,k})^2 \le C.
\end{aligned}
\end{equation}
Second, since $w_k\in V_{u_k}$ we have
\begin{equation}
P_{\widetilde u_k}\overline w_k=\overline w_k
\qquad \text{for all } (s,\theta)\in [0,1]\times S^1.
\end{equation}
Differentiate this identity in the $s$- and $\theta$-directions. For $\alpha\in\{s,\theta\}$ this gives
\begin{equation}
dP_{\widetilde u_k}[\partial_\alpha\widetilde u_k]\,\overline w_k
+P_{\widetilde u_k}\partial_\alpha\overline w_k
=\partial_\alpha\overline w_k,
\end{equation}
hence
\begin{equation}\label{eq:first-normal-general}
(id-P_{\widetilde u_k})\partial_\alpha\overline w_k
=
dP_{\widetilde u_k}[\partial_\alpha\widetilde u_k]\,\overline w_k.
\end{equation}
Differentiating once more, we obtain
\begin{equation}
\begin{aligned}
(id-P_{\widetilde u_k})\partial_\alpha^2\overline w_k
&=
d^2P_{\widetilde u_k}[\partial_\alpha\widetilde u_k,\partial_\alpha\widetilde u_k]\,\overline w_k
+dP_{\widetilde u_k}[\partial_\alpha^2\widetilde u_k]\,\overline w_k
+2\,dP_{\widetilde u_k}[\partial_\alpha\widetilde u_k]\,\partial_\alpha\overline w_k .
\end{aligned}
\end{equation}
Therefore
\begin{equation}
\begin{aligned}
&(id-P_{\widetilde u_k})\bigl[\partial_s^2\overline w_k+T_{\eta,k}^2\partial_\theta^2\overline w_k\bigr] \\
&=
d^2P_{\widetilde u_k}[\partial_s\widetilde u_k,\partial_s\widetilde u_k]\,\overline w_k
+T_{\eta,k}^2\,d^2P_{\widetilde u_k}[\partial_\theta\widetilde u_k,\partial_\theta\widetilde u_k]\,\overline w_k \\
&\quad
+dP_{\widetilde u_k}[\partial_s^2\widetilde u_k+T_{\eta,k}^2\partial_\theta^2\widetilde u_k]\,\overline w_k \\
&\quad
+2\,dP_{\widetilde u_k}[\partial_s\widetilde u_k]\,\partial_s\overline w_k
+2\,T_{\eta,k}^2\,dP_{\widetilde u_k}[\partial_\theta\widetilde u_k]\,\partial_\theta\overline w_k .
\end{aligned}
\end{equation}
Since $\mathcal N$ is closed and smoothly embedded, the derivatives of the orthogonal projection are uniformly bounded on a tubular neighborhood of $\mathcal N$. Hence
\begin{equation}
\begin{aligned}
&\left|(id-P_{\widetilde u_k})\bigl[\partial_s^2\overline w_k+T_{\eta,k}^2\partial_\theta^2\overline w_k\bigr]\right| \\
&\le
C\Big(
|\partial_s\widetilde u_k|^2
+T_{\eta,k}^2|\partial_\theta\widetilde u_k|^2
+|\partial_s^2\widetilde u_k+T_{\eta,k}^2\partial_\theta^2\widetilde u_k|
\Big)|\overline w_k| \\
&\qquad
+ C\,|\partial_s\widetilde u_k|\,|\partial_s\overline w_k|
+ C\,|T_{\eta,k}\partial_\theta\widetilde u_k|\,|T_{\eta,k}\partial_\theta\overline w_k|.
\end{aligned}
\end{equation}
Now $\widetilde u_k$ satisfies
\begin{equation}
-\partial_s^2 \widetilde u_k - T_{\eta,k}^2 \,\partial_\theta^2 \widetilde u_k
=\mathbb A_{\widetilde u_k}\bigl(\partial_s \widetilde u_k,\partial_s \widetilde u_k\bigr)
+ T_{\eta,k}^2\,\mathbb A_{\widetilde u_k}\bigl(\partial_\theta \widetilde u_k,\partial_\theta \widetilde u_k\bigr),
\end{equation}
and therefore
\begin{equation}
|\partial_s^2\widetilde u_k+T_{\eta,k}^2\partial_\theta^2\widetilde u_k|
\le
C\bigl(|\partial_s\widetilde u_k|^2+T_{\eta,k}^2|\partial_\theta\widetilde u_k|^2\bigr).
\end{equation}
Hence
\begin{equation}
\begin{aligned}
&\left|(id-P_{\widetilde u_k})\bigl[\partial_s^2\overline w_k+T_{\eta,k}^2\partial_\theta^2\overline w_k\bigr]\right| \\
&\le
C\bigl(|\partial_s\widetilde u_k|^2+T_{\eta,k}^2|\partial_\theta\widetilde u_k|^2\bigr)|\overline w_k|
+ C\,|\partial_s\widetilde u_k|\,|\partial_s\overline w_k|
+ C\,|T_{\eta,k}\partial_\theta\widetilde u_k|\,|T_{\eta,k}\partial_\theta\overline w_k|.
\end{aligned}
\end{equation}
Taking the $L^2((\sigma,1-\sigma)\times S^1)$-norm and using Proposition \ref{estdertheta}, Proposition \ref{PROP: BD on dell s tilde u}, together with the bounds from (i), we get
\begin{equation}
\label{EQ: NBHIUZb2nuzfn2un23f12sdHNZde}
\begin{aligned}
&\left\|(id-P_{\widetilde u_k})\bigl[\partial_s^2\overline w_k+T_{\eta,k}^2\partial_\theta^2\overline w_k\bigr]\right\|_{L^2((\sigma,1-\sigma)\times S^1)} \\
&\le
C\|\ |\partial_s\widetilde u_k|^2+T_{\eta,k}^2|\partial_\theta\widetilde u_k|^2\ \|_{L^\infty((\sigma,1-\sigma)\times S^1)}
\|\overline w_k\|_{L^2((0,1)\times S^1)} \\
&\qquad
+ C\|\partial_s\widetilde u_k\|_{L^\infty((\sigma,1-\sigma)\times S^1)}
\|\partial_s\overline w_k\|_{L^2((0,1)\times S^1)} \\
&\qquad
+ C\|T_{\eta,k}\partial_\theta\widetilde u_k\|_{L^\infty((\sigma,1-\sigma)\times S^1)}
\|T_{\eta,k}\partial_\theta\overline w_k\|_{L^2((0,1)\times S^1)} \\
&\le C_{\sigma,\eta}.
\end{aligned}
\end{equation}
Combining \eqref{EQ: HUBZNu82hfnuj2fuj2df22dulkz6}, \eqref{EQ: JUNIUnu23nfd23nsjd2jnuuuujgr3v} and \eqref{EQ: NBHIUZb2nuzfn2un23f12sdHNZde} we conclude that
\begin{equation}
\norm{\partial_s^2\overline w_k + T_{\eta,k}^2 \partial_\theta^2 \overline w_k}_{L^2((\sigma,1-\sigma)\times S^1)} \le C_{\sigma,\eta}.
\end{equation}
\\ {\underline {\bf (iii):}}
Let now $v_k=\zeta\,\overline w_k$, where $\zeta \in C^{\infty}_c(0,1)$ and
\begin{equation}
\zeta|_{[\sigma,1-\sigma]}=1, \qquad \supp\zeta \subset \subset \left(\frac{\sigma}{2},1-\frac{\sigma}{2}\right), \qquad 0\le \zeta \le 1, \qquad |\zeta^\prime|\le \frac{4}{\sigma}.
\end{equation}
We have
\begin{equation}
(\partial_s^2+T_{\eta,k}^2\partial_\theta^2)v_k
=
\zeta(\partial_s^2+T_{\eta,k}^2\partial_\theta^2)\overline w_k
+2\zeta'\partial_s\overline w_k+\zeta''\overline w_k.
\end{equation}
Therefore,
\begin{equation}
\begin{aligned}
\|(\partial_s^2+T_{\eta,k}^2\partial_\theta^2)v_k\|_{L^2((0,1)\times S^1)}
&\le
\|(\partial_s^2+T_{\eta,k}^2\partial_\theta^2)\overline w_k\|_{L^2((\sigma/2,1-\sigma/2)\times S^1)} \\
&\quad +2\|\zeta'\|_{L^\infty}\|\partial_s\overline w_k\|_{L^2((0,1)\times S^1)}
+\|\zeta''\|_{L^\infty}\|\overline w_k\|_{L^2((0,1)\times S^1)}.
\end{aligned}
\end{equation}
The first term is bounded by the previous estimate on
$\partial_s^2\overline w_k+T_{\eta,k}^2\partial_\theta^2\overline w_k$ on the slightly larger strip
$(\sigma/2,1-\sigma/2)\times S^1$, while the last two terms are bounded by the uniform
$L^2$-bounds on $\partial_s\overline w_k$ and $\overline w_k$. Hence
\begin{equation}
\|(\partial_s^2+T_{\eta,k}^2\partial_\theta^2)v_k\|_{L^2((0,1)\times S^1)}
\le C_\sigma.
\end{equation}
Since $v_k$ is compactly supported in $s$ and periodic in $\theta$, integration by parts gives
\begin{equation}
\begin{aligned}
\|(\partial_s^2+T_{\eta,k}^2\partial_\theta^2)v_k\|_{L^2}^2
&=
\|\partial_s^2 v_k\|_{L^2}^2
+T_{\eta,k}^4\|\partial_\theta^2 v_k\|_{L^2}^2
+2T_{\eta,k}^2\int (\partial_s^2 v_k)\cdot(\partial_\theta^2 v_k) \\
&=
\|\partial_s^2 v_k\|_{L^2}^2
+2T_{\eta,k}^2\|\partial_s\partial_\theta v_k\|_{L^2}^2
+T_{\eta,k}^4\|\partial_\theta^2 v_k\|_{L^2}^2.
\end{aligned}
\end{equation}
Hence
\begin{equation}
\|\partial_s^2 v_k\|_{L^2}
+\|T_{\eta,k}\partial_s\partial_\theta v_k\|_{L^2}
+\|T_{\eta,k}^2\partial_\theta^2 v_k\|_{L^2}
\le C_\sigma.
\end{equation}
Since $\zeta\equiv 1$ on $(\sigma,1-\sigma)$, this yields
\begin{equation}
\|\partial_s^2\overline w_k\|_{L^2((\sigma,1-\sigma)\times S^1)}
+\|T_{\eta,k}\partial_s\partial_\theta\overline w_k\|_{L^2((\sigma,1-\sigma)\times S^1)}
+\|T_{\eta,k}^2\partial_\theta^2\overline w_k\|_{L^2((\sigma,1-\sigma)\times S^1)}
\le C_\sigma.
\end{equation}
\\ {\underline {\bf (iv):}}
Now
\begin{equation}
\begin{aligned}
\norm{\partial_\theta \overline w_k}_{L^2((\sigma,1-\sigma)\times S^1)}^2
&= \int_{(\sigma,1-\sigma)\times S^1} \partial_\theta \overline w_k \cdot \partial_\theta \overline w_k \ d\theta ds \\
&= \int_{(\sigma,1-\sigma)\times S^1} \partial_\theta \overline w_k \cdot \partial_\theta \left(\overline w_k- \frac{1}{2\pi} \int_0^{2\pi}\overline w_k(s,\theta^\prime) d\theta^\prime \right) \ d\theta ds \\
&= \int_{(\sigma,1-\sigma)\times S^1} -\partial_\theta^2 \overline w_k \cdot  \left(\overline w_k- \frac{1}{2\pi} \int_0^{2\pi}\overline w_k(s,\theta^\prime) d\theta^\prime \right) \ d\theta ds \\
&\le \norm{\partial_\theta^2 \overline w_k}_{L^2((\sigma,1-\sigma)\times S^1)} \norm{\overline w_k- \frac{1}{2\pi} \int_0^{2\pi}\overline w_k(s,\theta^\prime) d\theta^\prime}_{L^2((\sigma,1-\sigma)\times S^1)} \\
&\le \frac{C}{T_{\eta,k}^2} \norm{\overline w_k- \frac{1}{2\pi} \int_0^{2\pi}\overline w_k(s,\theta^\prime) d\theta^\prime}_{L^2((\sigma,1-\sigma)\times S^1)}
\end{aligned}
\end{equation}
With Poincar\'e's inequality on the unit circle
\begin{equation}
\begin{aligned}
\left\| \overline w_k-\frac{1}{2\pi} \int_0^{2\pi}\overline w_k(s,\theta^\prime) d\theta^\prime \right\|_{L^2((\sigma,1-\sigma)\times S^1)}
&= \left(\int_{\sigma}^{1-\sigma} \left\|\overline w_k-\frac{1}{2\pi} \int_0^{2\pi}\overline w_k(s,\theta^\prime) d\theta^\prime \right\|_{L^2( S^1)}^2 ds \right)^{1/2} \\
&\le \left( \int_{\sigma}^{1-\sigma} \left\| \partial_\theta \overline w_k \right\|_{L^2( S^1)}^2 ds \right)^{1/2} \\
&= \left\| \partial_\theta \overline w_k \right\|_{L^2((\sigma,1-\sigma)\times S^1)}
\end{aligned}
\end{equation}
By absorbing $\left\| \partial_\theta \overline w_k \right\|_{L^2((\sigma,1-\sigma)\times S^1)}$ we get
\begin{equation}
\norm{\partial_\theta \overline w_k}_{L^2((\sigma,1-\sigma)\times S^1)} \le \frac{C}{T_{\eta,k}^2}
\end{equation}
\\ {\underline {\bf (v):}}
Up to extraction we already know that
\begin{equation}
w_k \rightharpoonup w_\infty
\quad\text{weakly in }W^{1,2}(\Sigma_k)\cap W^{2,2}_{\mathrm{loc}}(\Sigma_k\setminus\{\gamma_k\}),
\end{equation}
and that
\begin{equation}
\overline w_k(s,\theta):=\frac1{\sqrt{T_{\eta,k}}}\,
w_k\circ\Phi^{-1}(r(s),\theta)
\rightharpoonup v_\infty
\quad\text{weakly in }W^{2,2}_{\mathrm{loc}}((0,1)\times S^1).
\end{equation}
It remains to prove that
\begin{equation}
\text{either } w_\infty\neq 0,\qquad\text{or }v_\infty\neq 0.
\end{equation}
To derive a contradiction assume
\begin{equation}
\text{either } w_\infty= 0,\qquad\text{and }v_\infty= 0.
\end{equation}
Let $\chi \in C^\infty_c(0,1)$ be a cutoff function with $0\le \chi \le 1$,
\begin{equation}
\chi|_{\big[\frac{\log(2)}{T_{\eta,k}},\frac{\sigma}{2}\big]\cup\big[1-\frac{\sigma}{2},1-\frac{\log(2)}{T_{\eta,k}}\big]}=1, \qquad \supp \chi \subset \subset \big[0,\sigma\big]\cup\big[1-\sigma,1\big]
\end{equation}
and also
\begin{equation}
\bigg|\chi^\prime|_{\big[0,\frac{\log(2)}{T_{\eta,k}}\big] \cup \big[1-\frac{\log(2)}{T_{\eta,k}},1\big]}\bigg| \le 4 T_{\eta,k}, \qquad
\bigg|\chi^\prime|_{\big[\frac{\sigma}{2},\sigma \big] \cup \big[1-\sigma,1-\frac{\sigma}{2}\big]}\bigg| \le \frac{4}{\sigma}.
\end{equation}
We introduce the variation
\begin{equation}
\xi_k(q) \coloneqq
\begin{cases}
\chi(s(t))\,w_k(t,\theta), & q=(t,\theta)\in \mathtt P_k^\varrho,\\
0, & q\in \Sigma_k\setminus \mathtt P_k^\varrho.
\end{cases}
\end{equation}
and the according notation
\begin{equation}
\widetilde \xi_k(s,\theta) \coloneqq \xi_k(t(s),\theta), \qquad
\overline \xi_k(s,\theta) \coloneqq \frac{1}{\sqrt{T_{\eta,k}}} \widetilde \xi_k(s,\theta), \qquad
\xi_k^\sharp(x) \coloneqq (\xi_k \circ \Phi^{-1})(x).
\end{equation}
Note that the domain
\begin{equation}
\left[0,\frac{\log(2)}{T_{\eta,k}}\right] \times S^1\cup \left[1-\frac{\log(2)}{T_{\eta,k}},1\right] \times S^1
\subset [0,1] \times S^1
\end{equation}
corresponds in the long cylinder $\mathtt P_k^\varrho$ to
\begin{equation}
\mathtt O \coloneqq 
\left[\frac{\pi}{\varrho},\,\frac{\pi}{\varrho}+\log 2\right] \times S^1
\cup
\left[\frac{2\pi^2}{l_k}-\frac{\pi}{\varrho}-\log 2,\,
\frac{2\pi^2}{l_k}-\frac{\pi}{\varrho}\right] \times S^1
\subset \left[\frac{\pi}{\varrho},\,\frac{2\pi^2}{l_k}-\frac{\pi}{\varrho}\right] \times S^1,
\end{equation}
and in the annulus $A(\eta,\delta_k)$ to
\begin{equation}
A(\eta,\eta/2)\cup A(2\delta_k/\eta,\delta_k/\eta).
\end{equation}
And note that the domain
\begin{equation}
\left(\frac{\sigma}{2},1-\frac{\sigma}{2}\right)\subset [0,1]
\end{equation}
corresponds in the long cylinder $\mathtt P_k^\varrho$ to
\begin{equation}
\mathtt M \coloneqq
\left(\frac{\pi}{\varrho}+\frac{\sigma}{2}T_{\eta,k},\,
\frac{2\pi^2}{l_k}-\frac{\pi}{\varrho}-\frac{\sigma}{2}T_{\eta,k}\right) \times S^1
\subset \left[\frac{\pi}{\varrho},\,\frac{2\pi^2}{l_k}-\frac{\pi}{\varrho}\right] \times S^1,
\end{equation}
and in the annulus $A(\eta,\delta_k)$ to
\begin{equation}
A\!\left(\eta\,\varepsilon_k(\sigma/2),\,
\frac{\delta_k}{\eta\,\varepsilon_k(\sigma/2)}\right),
\qquad
\varepsilon_k(\sigma)=\left(\frac{\delta_k}{\eta^2}\right)^\sigma.
\end{equation}
{\bf Claim 1:}
\begin{equation}
|Q_{u_k}(w_k) - Q_{u_k}(\xi_k)| \to 0
\end{equation}
Proof:
\begin{equation}
\begin{aligned}
&Q_{u_k}(w_k) - Q_{u_k}(\xi_k) \\
& = \int_{\Sigma_k \setminus \mathtt P_k^\varrho} |\nabla w_k|^2 -\mathbb A_{u_k}(\nabla u_k,\nabla u_k) \cdot\mathbb A_{u_k}(w_k,w_k)\\
&\qquad + \int_{\mathtt O} (|\nabla w_k|^2 - |\nabla \xi_k|^2 ) -\mathbb A_{u_k}(\nabla u_k,\nabla u_k)\cdot (\mathbb A_{u_k}(w_k,w_k) -\mathbb A_{u_k}(\xi_k,\xi_k)) \\
& \qquad + \int_{\mathtt M} (|\nabla w_k|^2 - |\nabla \xi_k|^2 ) -\mathbb A_{u_k}(\nabla u_k,\nabla u_k) \cdot (\mathbb A_{u_k}(w_k,w_k) -\mathbb A_{u_k}(\xi_k,\xi_k))
\end{aligned}
\end{equation}
Because of strong convergence of $\nabla u_k$, $w_k$ and $\nabla w_k$

\medskip
\begin{equation}
\abs{\int_{\Sigma_k \setminus \mathtt P_k^\varrho} |\nabla w_k|^2 -\mathbb A_{u_k}(\nabla u_k,\nabla u_k) \cdot\mathbb A_{u_k}(w_k,w_k)}\to 0,
\end{equation}
as $k\to\infty$.
Using the fact that
\begin{equation}
|\nabla \xi_k|\le |\chi^\prime /T_{\eta,k}| \ |w_k| + |\chi|\ |\nabla w_k|\le  |w_k| + |\nabla w_k|, \qquad \text{ in }\mathtt O,
\end{equation}
together with strong convergence of $\nabla u_k$ bound
\begin{equation}
\begin{aligned}
&\abs{\int_{\mathtt O} (|\nabla w_k|^2 - |\nabla \xi_k|^2 ) -\mathbb A_{u_k}(\nabla u_k,\nabla u_k) \cdot (\mathbb A_{u_k}(w_k,w_k) -\mathbb A_{u_k}(\xi_k,\xi_k))} \\
&\le C \int_{\mathtt O} |w_k|^2 + |\nabla w_k|^2 \to 0,
\end{aligned}
\end{equation}
as $k\to\infty$.
Using
\begin{equation}
|\partial_s \overline \xi_k| \le C_\sigma|\overline w_k| + |\partial_s \overline w_k |, \qquad
|\partial_\theta \overline \xi_k| \le \chi |\partial_\theta \overline w_k| \qquad \text{ in }\mathtt M,
\end{equation}
With a change of variables estimate
\begin{equation}
\begin{aligned}
&\abs{\int_{\mathtt M} (|\nabla w_k|^2 - |\nabla \xi_k|^2 ) -\mathbb A_{u_k}(\nabla u_k,\nabla u_k) \cdot (\mathbb A_{u_k}(w_k,w_k) -\mathbb A_{u_k}(\xi_k,\xi_k))} \\
&= \Bigg|\int_{\frac{\sigma}{2}}^{1-\frac{\sigma}{2}} \int_0^{2\pi} (|\partial_s \overline w_k|^2-|\partial_s \overline \xi_k|^2 ) + T_{\eta,k}^2 (|\partial_\theta \overline w_k|^2-|\partial_\theta \overline \xi_k|^2) \\
& \qquad - (\mathbb A_{\widetilde u_k}(\partial_s\widetilde u_k,\partial_s\widetilde u_k) + T_{\eta,k}^2 \mathbb A_{\widetilde u_k}(\partial_\theta\widetilde u_k,\partial_\theta\widetilde u_k) ) \cdot (\mathbb A_{\widetilde u_k}(\overline w_k,\overline w_k) -\mathbb A_{\widetilde u_k}(\overline \xi_k,\overline \xi_k))\Bigg| \\
& \le C \int_{\frac{\sigma}{2}}^{1-\frac{\sigma}{2}} \int_0^{2\pi} |\partial_s \overline w_k|^2 + T_{\eta,k}^2 |\partial_\theta \overline w_k|^2 + (1+|\partial_s \widetilde u_k|^2 + T_{\eta,k}^2 |\partial_\theta \widetilde u_k|^2) |\overline w_k|^2 \\
& \le C \left(\norm{\overline w_k}_{L^2((\frac{\sigma}{2},1-\frac{\sigma}{2})\times S^1)}^2 + \norm{\partial_s\overline w_k}_{L^2((\frac{\sigma}{2},1-\frac{\sigma}{2})\times S^1)}^2 + T_{\eta,k}^2 \norm{\partial_\theta\overline w_k}_{L^2((\frac{\sigma}{2},1-\frac{\sigma}{2})\times S^1)}^2  \right) \\
& \le C \left(\norm{\overline w_k}_{L^2((\frac{\sigma}{2},1-\frac{\sigma}{2})\times S^1)}^2 + \norm{\partial_s\overline w_k}_{L^2((\frac{\sigma}{2},1-\frac{\sigma}{2})\times S^1)}^2 + \frac{1}{T_{\eta,k}^2}  \right) \to 0,
\end{aligned}
\end{equation}
as $k\to\infty$, where we also used the claim (iv).
Combining these results we obtain Claim 1.
\\ {\bf Claim 2:}
\begin{equation}
\int_{\Sigma_k} |\xi_k|^2 \omega_{\eta,k} \to 1.
\end{equation}
Proof:
Start by using that $|\xi_k| \le |w_k|$
\begin{equation}
\begin{aligned}
\abs{1- \int_{\Sigma_k} |\xi_k|^2 \omega_{\eta,k}}
&= \abs{\int_{\Sigma_k} (|w_k|^2 - |\xi_k|^2 ) \omega_{\eta,k}} \\
&\le \abs{\int_{(\Sigma_k\setminus \mathtt P_k^\varrho) \cup \mathtt O} (|w_k|^2 - |\xi_k|^2 ) \omega_{\eta,k}}
+ \abs{\int_{\mathtt M} (|w_k|^2 - |\xi_k|^2 ) \omega_{\eta,k}} \\
&\le \underbrace{\norm{\omega_{\eta,k}}_{L^\infty}}_{\le C} \underbrace{\int_{(\Sigma_k\setminus \mathtt P_k^\varrho) \cup \mathtt O} |w_k|^2 }_{\to0}
+ \abs{\int_{\mathtt M} (|w_k|^2 - |\xi_k|^2 ) \omega_{\eta,k}}
\end{aligned}
\end{equation}
Using $|\overline \xi_k| \le |\overline w_k|$ and also changing variables
\begin{equation}
\begin{aligned}
\abs{\int_{\mathtt M} (|w_k|^2 - |\xi_k|^2 ) \omega_{\eta,k}}
&= \abs{\int_{\frac{\sigma}{2}}^{1-\frac{\sigma}{2}} \int_0^{2\pi} (|\widetilde w_k|^2 - |\widetilde\xi_k|^2 ) \widetilde\omega_{\eta,k}\ T_{\eta,k} } \\
&= \abs{\int_{\frac{\sigma}{2}}^{1-\frac{\sigma}{2}} \int_0^{2\pi} (|\overline w_k|^2 - |\overline\xi_k|^2 ) \widetilde\omega_{\eta,k}\ T_{\eta,k}^2} \\
&\le 2\underbrace{\norm{\widetilde\omega_{\eta,k}\ T_{\eta,k}^2}_{L^\infty((\frac{\sigma}{2},1-\frac{\sigma}{2})\times S^1)}}_{\le C} \underbrace{\abs{\int_{\frac{\sigma}{2}}^{1-\frac{\sigma}{2}} \int_0^{2\pi} |\overline w_k|^2}}_{\to0}
\end{aligned}
\end{equation}
This proves Claim 2. \\
Now the fact that $Q_{u_k}(w_k)\le 0$ together with Claim 1 imply
\begin{equation}
\limsup_{k\to\infty} Q_{u_k}(\xi_k)\le 0.
\end{equation}
But also combining Claim 2 with Theorem \ref{THEOREM: Pos in conn necks}
\begin{equation}
\liminf_{k\to\infty} Q_{u_k}(\xi_k) \ge \overline \kappa>0.
\end{equation}
This is a contradiction and hence either $w_\infty \neq 0$ or $v_\infty \neq 0$
\end{proof}

\section{Proof of Main Theorem \ref{th-morse-stability-degnew}}
\label{SECTION: Proof of Main Theorem}

We can finally show

\begin{theorem} For large $k$ there holds
\begin{equation}
\operatorname{Ind}(u_k) + \operatorname{Null}(u_k)
\le \operatorname{Ind}(u_\infty) + \operatorname{Null}(u_\infty) + \operatorname{Ind}(\widetilde\gamma_\infty) + \operatorname{Null}(\widetilde\gamma_\infty)
\end{equation}
\end{theorem}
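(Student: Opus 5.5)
The plan is to argue by dimension counting through a linear map from the non-positive eigenspace of $\mathcal L_{\eta,k}$ into the product of the limiting non-positive eigenspaces. By Lemma \ref{LEMMA: Ind plus Null eq dim of eigenspaces}, $\operatorname{Ind}(u_k)+\operatorname{Null}(u_k)=\dim\bigl(\bigoplus_{\lambda\le0}\mathcal E_{\eta,k}(\lambda)\bigr)$. By Lemma \ref{LEMMA: Ind plus Null eq dim of eig for infty lim func}, it therefore suffices to show that for $k$ large
\begin{equation}
\dim\Bigl(\bigoplus_{\lambda\le0}\mathcal E_{\eta,k}(\lambda)\Bigr)\le \dim\bigl(\mathcal E^0_{\eta,\infty}\bigr)+\dim\bigl(\widetilde{\mathcal E}^0_{\infty}\bigr).
\end{equation}
Suppose this fails along a subsequence. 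Set $N:=\dim\mathcal E^0_{\eta,\infty}+\dim\widetilde{\mathcal E}^0_\infty$. Then we can pick $w_k\in\mathcal S^0_{\eta,k}$ that is $\langle\cdot,\cdot\rangle_{\omega_{\eta,k}}$-orthogonal to a finite family of test directions. These are transplanted versions of orthonormal bases of $\mathcal E^0_{\eta,\infty}$ (pulled back via $\tau_k$ and cut off near the collar) and of $\widetilde{\mathcal E}^0_\infty$ (placed on the collar as $\sqrt{T_{\eta,k}}\,\phi(s(t))$, $\theta$-independent and projected onto $T_{u_k}\mathcal N$). This is possible because a space of dimension $>N$ contains a unit vector orthogonal to $N$ given vectors.

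Next we apply Lemma \ref{LEMMA: On the non zero of the lim functs} to $w_k$. This gives limits $w_\infty$ and $v_\infty$, at least one nonzero, with $v_\infty$ independent of $\theta$ by claim (iv) of that proof, and $v_\infty$ tangent along $\widetilde\gamma_\infty$ by Proposition \ref{congeo}. The key step is to pass to the limit in the eigenvalue equation. Decompose $w_k=\sum_j c_k^j\phi_k^j$ with $\lambda_k^j\in[-C\mu_0,0]$ by Lemma \ref{LEMMA: prop on the seq mu eta k}, and extract $c_k^j\lambda_k^j$-type limits on finitely many eigenvalue clusters. Away from the collar, $\omega_{\eta,k}\to\omega_{\eta,\infty}$, $u_k\to u_\infty$ in $C^1_{loc}$, and $W^{2,2}_{loc}$ convergence holds, so $w_\infty$ is a finite sum of solutions of $\mathcal L_{\eta,\infty}z=\lambda z$ with $\lambda\le0$. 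Removability at the punctures follows from the $W^{1,2}$ bound. Hence $w_\infty\in\mathcal E^0_{\eta,\infty}$.

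In the middle of the collar, the rescaled equation from step (ii) of the proof of Lemma \ref{LEMMA: On the non zero of the lim functs} uses $T_{\eta,k}^2\widetilde\omega_{\eta,k}\to1$, Proposition \ref{estdertheta} and Proposition \ref{congeo}. The $T^2_{\eta,k}\partial_\theta^2$ term averages out in $\theta$. So $v_\infty$ solves $\widetilde{\mathcal L}_\infty v=\lambda v$ with $\lambda\le0$ on $(\sigma,1-\sigma)$ for every $\sigma$.

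The main obstacle is showing that $v_\infty$ vanishes at $s=0,1$, so that it is admissible in $V_{\widetilde\gamma_\infty}$, and that no mass escapes into the transition regions. For this we use Theorem \ref{THEOREM: Pos in conn necks} with the cutoff $\chi$ argument as in Claims 1 and 2 of step (v). Localizing $w_k$ to the connecting necks gives
\begin{equation}
\bar\kappa\int|w_k|^2\mathfrak w_{\sigma,\eta,k}\le Q_{u_k}(\chi w_k)\le o(1)+\text{(boundary terms)}.
\end{equation}
The extra $\frac{1}{\sigma^2T_{\eta,k}^2}$ part of $\mathfrak w_{\sigma,\eta,k}$ then controls $\int_0^\sigma|\overline w_k|^2\,ds\lesssim\sigma^2$. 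A one-dimensional Hardy-type bound of this form forces $v_\infty(s)\to0$ as $s\to0,1$, and the $\partial_s$ bound in claim (i) makes this quantitative.

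With this in hand we conclude. The orthogonality constraints pass to the limit, where weighted inner products split into a base part plus a collar part through the same change of variables as in claim (i). This gives $w_\infty\perp\mathcal E^0_{\eta,\infty}$ and $v_\infty\perp\widetilde{\mathcal E}^0_\infty$. Combined with the membership just established, this yields $w_\infty=0$ and $v_\infty=0$, contradicting Lemma \ref{LEMMA: On the non zero of the lim functs}.
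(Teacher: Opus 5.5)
\textbf{The gap: the boundary condition step.} The step you single out as the main obstacle is argued by a mechanism that does not close. You need $\bar\kappa\int|\chi w_k|^2\mathfrak w_{\sigma,\eta,k}\le Q_{u_k}(\chi w_k)\le o(1)+(\text{boundary terms})$ with the boundary terms bounded independently of $\sigma$, and this is not available.

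\begin{itemize}
\item The comparison $|Q_{u_k}(w_k)-Q_{u_k}(\xi_k)|\to0$ in step (v) of Lemma \ref{LEMMA: On the non zero of the lim functs} relies on the contradiction hypothesis $w_\infty=v_\infty=0$, so you cannot invoke it here.
\item In general, the cutoff on $[\sigma/2,\sigma]$ contributes an error of size $\frac{C}{\sigma^2}\int_{\sigma/2}^{\sigma}\int_{S^1}|\overline w_k|^2$.
\item If $v_\infty(0)\neq0$, this error is of the same order $\sigma^{-1}$ as the gain $\frac{\bar\kappa}{\sigma^2}\int_0^{\sigma/2}\int_{S^1}|\overline w_k|^2$ coming from the $\frac{1}{\sigma^2T_{\eta,k}^2}$ part of the weight.
\item Nothing makes $\bar\kappa$ dominate the cutoff constant, so $\int_0^\sigma|\overline w_k|^2\lesssim\sigma^2$ does not follow.
\end{itemize}

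Theorem \ref{THEOREM: Pos in conn necks} is not what is needed. The boundary condition comes from the normalization by $\sqrt{T_{\eta,k}}$:
\begin{itemize}
\item On $\Sigma_k\setminus\mathtt P_k^\varrho$ we have $\omega_{\eta,k}\ge1$, and $\int_{\Sigma_k}|\nabla w_k|^2\le C\mu_0$ by claim (i). So $w_k$ is bounded in $W^{1,2}$ on the fixed cylinder $[\pi,\pi/\varrho]\times S^1$ adjacent to $\partial\mathtt P_k^\varrho$.
\item Hence its trace at $t=\pi/\varrho$ is bounded in $L^2(S^1)$, and $\overline w_k(0,\cdot)=T_{\eta,k}^{-1/2}\,w_k|_{t=\pi/\varrho}\to0$ in $L^2(S^1)$.
\item Combined with $\|\partial_s\overline w_k\|_{L^2}^2\le\mu_0$, this gives $\|\overline w_k(s,\cdot)\|_{L^2(S^1)}\le o(1)+\sqrt{\mu_0 s}$.
\end{itemize}
Therefore $v_\infty(0)=0$, and symmetrically $v_\infty(1)=0$. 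The paper's proof uses the same fact tacitly when it places $\sigma_\infty^j$ in $\widetilde{\mathcal E}^0_\infty$.

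\textbf{Comparison with the paper's route.} The rest of your argument is a workable but heavier variant of the paper's.
\begin{itemize}
\item The paper fixes $M$ orthonormal eigenfunctions $\phi_k^j$. It identifies each pair of limits $(\phi_\infty^j,\sigma_\infty^j)$ as eigenfunctions of $\mathcal L_{\eta,\infty}$ and $\widetilde{\mathcal L}_\infty$ with eigenvalue $\lambda_\infty^j\le0$. It then uses linear dependence in $\mathcal E^0_{\eta,\infty}\times\widetilde{\mathcal E}^0_\infty$ to produce a fixed combination $w_k\in\mathcal S^0_{\eta,k}$ whose two limits both vanish.
\item You need exactly this identification of limits of individual eigenfunctions as well. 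To do it you should likewise restrict to the span of $N+1$ eigenfunctions: the number of nonpositive eigenvalues is not known a priori to be bounded in $k$, so ``finitely many eigenvalue clusters'' is not justified as written.
\item Once that is done, linear dependence finishes the proof immediately. The transplanted test directions are then extra work that buys nothing here. So is the check that the orthogonality relations pass to the split limit, which must handle $T_{\eta,k}^2\widetilde\omega_{\eta,k}$ blowing up near $s=0,1$ and the cutoffs near the punctures.
\end{itemize}
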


\begin{proof}
By Lemma \ref{LEMMA: Ind plus Null eq dim of eigenspaces} and Lemma \ref{LEMMA: Ind plus Null eq dim of eig for infty lim func} it suffices to show that for $k \in \N$ large and $\eta>0$ small
\begin{equation}
\operatorname{dim}\left( \bigoplus_{\lambda\le 0} \mathcal E_{\eta,k}(\lambda) \right) 
\le \dim\left(\mathcal E_{\eta,\infty}^0\right) + \dim\left(\widetilde{\mathcal E}_{\infty}^0\right).
\end{equation}
Let $M\in\N$ be fixed. 
For $k \in\N$ let $\phi_k^1,\ldots,\phi_k^M$ be a free orthonormal family of $U_{u_k}$ of eigenfunctions of the operator $\mathcal L_{\eta,k}$ with according negative eigenvalues $\lambda_{k}^1, \ldots, \lambda_{k}^M \le 0$.
For a contradiction we assume that 
\begin{equation}
\label{EQ: jnioecnHI1829he1n1ed231}
M > \dim\left(\mathcal E_{\eta,\infty}^0\right) + \dim\left(\widetilde {\mathcal E}_{\infty}^0\right).
\end{equation}
By Lemma \ref{LEMMA: On the non zero of the lim functs} we find that up to subsequences
\begin{equation}
\label{EQ: iuwenfiUNIUND9183hrn9231}
\phi_k^j \rightharpoondown \phi_\infty^j, \text{ weakly in } W^{2,2}_{loc}(\Sigma_k\setminus\{\gamma_k\})
\end{equation}
and 
\begin{equation}
\label{EQ: IUNi2h3nf2nf3njunjdqnwj2323fd21zujujdxqw}
\sigma_k^j(s,\theta) \coloneqq \frac{1}{\sqrt{T_{\eta,k}}}\ \phi_k^j(t(s),\theta) \rightharpoondown \sigma_\infty^j(z), \text{ weakly in } W^{2,2}_{loc}((0,1)\times S^1).
\end{equation}

{\bf Claim 1:} $\mathcal L_{\eta,\infty} \phi_\infty^j = \lambda_\infty^j \phi_\infty^j \text{ in } \Sigma$. \\
{\it Proof.}
Let $w\in W^{1,2}(\Sigma;\R^m)$ with compact support. Consider
\begin{equation}
\label{EQ: inwejnJININF130fn}
\begin{aligned}
\langle \mathcal L_{\eta,k} \phi_k^j , w \rangle_{\omega_{\eta,k}} 
&=\underbrace{\int_{\Sigma} \nabla  \phi_k^j \cdot P_{u_k} \nabla w \ dvol_\Sigma}_{\eqqcolon I_{\eta,k}} \\
&\hspace{20mm}-\underbrace{\int_{\Sigma}  S_{u_{k}}(\nabla u_{k}) \phi_k^j \cdot P_{u_k}w \ dvol_\Sigma}_{\eqqcolon II_{\eta,k}} \\
\end{aligned}
\end{equation}
First, using \eqref{EQ: iuwenfiUNIUND9183hrn9231} we know that
\begin{equation}
\nabla  \phi_k^j \rightharpoonup \nabla  \phi_\infty^j, \text{ weakly in } W^{1,2}_{loc}(\Sigma_k\setminus\{\gamma_k\})
\end{equation} 
and hence also with \eqref{EQ: JINhd2njkd21JNDs82fn28u4fhn2f}
\begin{equation}
I_{\eta,k} \rightarrow  \int_\Sigma \nabla  \phi_\infty^j \cdot P_{u_\infty} \nabla w \ dvol_\Sigma,  \qquad \text{ as } k \rightarrow \infty.
\end{equation}
Second, using \eqref{EQ: iuwenfiUNIUND9183hrn9231}, \eqref{EQ: JINhd2njkd21JNDs82fn28u4fhn2f} we know that
\begin{equation}
S_{u_{k}}(\nabla u_{k}) \phi_k^j \rightharpoondown S_{u_{\infty}}(\nabla u_{\infty}) \phi_\infty^j, \text{ weakly in } W^{2,2}_{loc}(\Sigma_k\setminus\{\gamma_k\})
\end{equation} 
and hence 
\begin{equation}
II_{\eta,k} \rightarrow \int_\Sigma S_{u_{\infty}}(\nabla u_{\infty}) \phi_\infty^j \cdot P_{u_\infty} w \ dvol_\Sigma,  \qquad \text{ as } k \rightarrow \infty.
\end{equation}
Going back to \eqref{EQ: inwejnJININF130fn} we have shown that 
\begin{equation}
\langle \mathcal L_{\eta,k} \phi_k^j , w \rangle_{\omega_{\eta,k}} \rightarrow \langle \mathcal L_{\eta,\infty} \phi_\infty^j , w \rangle_{\omega_{\eta,\infty}} , \qquad \text{ as } k \rightarrow \infty.
\end{equation}
This means that
\begin{equation}
\mathcal L_{\eta,k} \phi_k^j \rightharpoondown  \mathcal L_{\eta,\infty} \phi_\infty^j, \text{ weakly in } W^{1,2}_{loc}(\Sigma_k\setminus\{\gamma_k\}).
\end{equation}
This together with
\begin{equation}
\mathcal L_{\eta,k} \phi_k^j = \lambda_k^j \phi_k^j \rightharpoondown \lambda_\infty^j \phi_\infty^j, \text{ weakly in } W^{1,2}_{loc}(\Sigma_k\setminus\{\gamma_k\})
\end{equation}
 gives
\begin{equation}
\mathcal L_{\eta,\infty} \phi_\infty^j = \lambda_\infty^j \phi_\infty^j \text{ in } \Sigma .
\end{equation}
Now recall the defintion given in \eqref{EQ: JNUn2u3fn239fn3f2n}. \\
{\bf Claim 2:} $\widetilde {\mathcal L}_{\infty} \sigma_\infty^j = \lambda_\infty^j \sigma_\infty^j \text{ in } (0,1)$.

\medskip
{\it Proof.}
Let $\widetilde w\in W^{1,2}((0,1);\R^m)$ with compact support in $(0,1)$.
Fix $\sigma\in(0,1/2)$ with $\supp w\subset \subset (\sigma,1-\sigma)$.
We use the notation $w(t,\theta)\coloneqq \widetilde w(s(t))$, where $(t,\theta) \in \mathtt P^\varrho_k$.
We will pass in the identity
\begin{equation}
\label{EQ: weak form for sigma k j}
\sqrt{T_{\eta,k}} \langle \mathcal L_{\eta,k}\phi_k^j,w\rangle_{\omega_{\eta,k}} = \sqrt{T_{\eta,k}} \lambda_k^j \langle\phi_k^j,w\rangle_{\omega_{\eta,k}}
\end{equation}
to the limit.
On one hand, by changing variables
\begin{equation}
\begin{aligned}
&\sqrt{T_{\eta,k}} \langle \mathcal L_{\eta,k}\phi_k^j,w\rangle_{\omega_{\eta,k}} \\
&=\int_{(\sigma,1-\sigma)\times S^1}
\partial_s \sigma_k^j \cdot P_{\widetilde u_k}\partial_s \widetilde w\ ds\,d\theta \\
&\hspace{20mm}
-\int_{(\sigma,1-\sigma)\times S^1}
\Big(
S_{\widetilde u_k}(\partial_s\widetilde u_k)\sigma_k^j
+T_{\eta,k}^2 S_{\widetilde u_k}(\partial_\theta\widetilde u_k)\sigma_k^j
\Big)\cdot P_{\widetilde u_k}\widetilde w \ ds\,d\theta,
\end{aligned}
\end{equation}
where we used that $\partial_\theta \widetilde w=0$.
We now pass to the limit term by term.
Using \eqref{EQ: IUNi2h3nf2nf3njunjdqnwj2323fd21zujujdxqw} and Proposition \ref{congeo} we have
\begin{equation}
\int_{(\sigma,1-\sigma)\times S^1}
\partial_s \sigma_k^j \cdot P_{\widetilde u_k}\partial_s \widetilde w \ ds\,d\theta
\to
\int_{(\sigma,1-\sigma)\times S^1}
\partial_s \sigma_\infty^j \cdot P_{\widetilde\gamma_\infty}\partial_s \widetilde w \ ds\,d\theta .
\end{equation}
Proposition \ref{estdertheta} gives
\begin{equation}
T_{\eta,k}^2\int_{(\sigma,1-\sigma)\times S^1}
S_{\widetilde u_k}(\partial_\theta\widetilde u_k)\sigma_k^j\cdot P_{\widetilde u_k}\widetilde w \ ds\,d\theta \to 0.
\end{equation}
Using \eqref{EQ: IUNi2h3nf2nf3njunjdqnwj2323fd21zujujdxqw} and Proposition \ref{congeo}
\begin{equation}
\int_{(\sigma,1-\sigma)\times S^1}
S_{\widetilde u_k}(\partial_s\widetilde u_k)\sigma_k^j\cdot P_{\widetilde u_k}\widetilde w \ ds\,d\theta
\to
\int_{(\sigma,1-\sigma)\times S^1}
S_{\widetilde\gamma_\infty}(\partial_s\widetilde\gamma_\infty)\sigma_\infty^j\cdot P_{\widetilde\gamma_\infty}\widetilde w \ ds\,d\theta.
\end{equation}
On the other hand
\begin{equation}
\sqrt{T_{\eta,k}} \lambda_k^j \langle\phi_k^j,w\rangle_{\omega_{\eta,k}}
=\lambda_k^j \int_{(\sigma,1-\sigma)\times S^1}
\sigma_k^j\cdot \widetilde w \ \widetilde\omega_{\eta,k}T_{\eta,k}^2 \ ds\,d\theta
\end{equation}
Finally, since
\begin{equation}
\lambda_k^j\to \lambda_\infty^j
\end{equation}
up to subsequences, and
\begin{equation}
\widetilde\omega_{\eta,k}T_{\eta,k}^2 \to \widetilde\omega_\infty =1
\qquad\text{uniformly on }(\sigma,1-\sigma)\times S^1,
\end{equation}
we obtain
\begin{equation}
\lambda_k^j \int_{(\sigma,1-\sigma)\times S^1}
\sigma_k^j\cdot \widetilde w \ \widetilde\omega_{\eta,k}T_{\eta,k}^2 \ ds\,d\theta
\to
\lambda_\infty^j \int_{(0,1)\times S^1}
\sigma_\infty^j\cdot \widetilde w \ ds\,d\theta .
\end{equation}
Passing to the limit in \eqref{EQ: weak form for sigma k j}, we conclude
\begin{equation}
\begin{aligned}
&\int_0^1
\partial_s \sigma_\infty^j \cdot P_{\widetilde\gamma_\infty}\partial_s \widetilde w \ ds\,d\theta\\
&\hspace{20mm}
-\int_0^1
S_{\widetilde\gamma_\infty}(\partial_s\widetilde\gamma_\infty)\sigma_\infty^j\cdot P_{\widetilde\gamma_\infty}\widetilde w \ ds\,d\theta\\
&\qquad=
\lambda_\infty^j \int_0^1
\sigma_\infty^j\cdot \widetilde w \ ds\,d\theta .
\end{aligned}
\end{equation}
This proves that
\begin{equation}
\widetilde {\mathcal L}_{\infty} \sigma_\infty^j = \lambda_\infty^j \sigma_\infty^j
\qquad \text{ in } (0,1).
\end{equation}
Now since by \eqref{EQ: jnioecnHI1829he1n1ed231} $M > \dim({\mathcal E}_{\eta,\infty}^0 \times \widetilde{\mathcal E}_{\infty}^0)$ we have that the family $(\phi_\infty^j,\sigma_\infty^j)_{j=1\ldots M}$ is linearly dependent and we can find some $(c_\infty^1, \ldots, c_\infty^M)\neq 0$ such that
\begin{equation}
\label{EQ: wifnuJNIOFN103mnf223}
\sum_{j=1}^M c_\infty^j \phi_\infty^j=0 \qquad \text{ and } \qquad \sum_{j=1}^M c_\infty^j \sigma_\infty^j=0.
\end{equation}
Let 
\begin{equation}
w_k \coloneqq \frac{1}{\left(\sum_{j=1}^M (c_\infty^j)^2\right)^{\frac{1}{2}}} \sum_{j=1}^M c_\infty^j \phi_k^j.
\end{equation}
Then $w_k \in \mathcal S_{\eta,k}^0$ and by Lemma \ref{LEMMA: On the non zero of the lim functs} up to subsequences
\begin{equation}
w_k \rightharpoondown w_\infty, \text{ in } \dot W^{1,2}(\Sigma) \qquad \text{ and } \qquad
\overline w_{k}(s,\theta) \rightharpoondown v_\infty(y), \text{ in } \dot W^{1,2}((0,1)\times S^1)
\end{equation}
and either $w_\infty \ne 0$ or $v_\infty \ne 0$.
But by \eqref{EQ: wifnuJNIOFN103mnf223} one has $(w_\infty,v_\infty)=(0,0)$.
This is a contradiction.
\end{proof}

\section{Appendix}
\subsection{Flat-torus Example}
\label{APPENDIX: Flat torus counterexample}

We give the details of the example announced in Remark \ref{REMARK: Converse without positive Ricci}. For $k\in\N$, let
\[
(\Sigma_k,h_k)
=
\left((\R/\Z)^2,k^2dx^2+k^{-2}dy^2\right),
\qquad
\mathcal N=\mathbb T^2=\R^2/\Z^2,
\]
let $\widetilde\gamma(s)=(s\!\!\mod 1,0)$ be the unit speed closed geodesic of period one in $\mathbb T^2$, and define
\[
u_k(x,y)=\widetilde\gamma(kx).
\]
The maps $u_k$ are harmonic. We represent $(\Sigma_k,h_k)$ by the long cylinder
\[
(t,\theta)\in[0,2\pi k^2]\times S^1,
\qquad
S^1=\R/(2\pi\Z),
\]
where the boundary circles $\{0\}\times S^1$ and $\{2\pi k^2\}\times S^1$ are identified. The conformal change of coordinates $t=2\pi k^2x$ and $\theta=2\pi y$ gives
\[
h_k=\frac{1}{4\pi^2k^2}(dt^2+d\theta^2),
\qquad
u_k(t,\theta)=\widetilde\gamma\left(\frac{t}{2\pi k}\right).
\]
In particular, the conformal classes are degenerating and
\[
\int_0^{2\pi k^2}\int_0^{2\pi}
|\partial_tu_k|^2\,d\theta\,dt=1,
\]
so that the energies are uniformly bounded, independently of the normalization of the Dirichlet energy.

There is only one degenerating collar, represented by $[0,2\pi k^2]\times S^1$. Setting $r=e^{-t}$ and $\delta_k^1=e^{-2\pi k^2}$, the map in annular coordinates is
\[
u_k(r,\theta)
=
\widetilde\gamma\left(-\frac{\log r}{2\pi k}\right).
\]
Therefore, for every fixed $\eta\in(0,1)$,
\[
\int_{\eta^{-1}\delta_k^1}^{\eta}
\frac{1}{2\pi}\int_0^{2\pi}
\left|\frac{du_k}{dr}\right|\,d\theta\,dr
=
\frac{1}{2\pi k}\log\left(\frac{\eta^2}{\delta_k^1}\right)
=
k+\frac{\log\eta}{\pi k}
\longrightarrow+\infty.
\]
Hence $\Lambda^1=+\infty$.

It remains to compute the extended Morse index. The quotient structure of $\mathbb T^2$ identifies every variation field along $u_k$ with a pair $V=(f,g)$ of $\Z^2$-periodic functions on $\R^2$, or equivalently with two periodic functions on the fundamental domain $[0,1]^2$. The corresponding variation is
\[
u_{k,s}(x,y)
=
\bigl(kx+sf(x,y),sg(x,y)\bigr)\!\!\mod\Z^2.
\]
Since
\[
(h_k^{ij})
=
\begin{pmatrix}
k^{-2}&0\\
0&k^2
\end{pmatrix},
\qquad
d\mathrm{vol}_{h_k}=dx\,dy,
\]
the Dirichlet energy is
\[
\begin{split}
E(u_{k,s})
=
\frac12\int_{[0,1]^2}
\Bigl[
&k^{-2}\bigl((k+s\partial_xf)^2+s^2(\partial_xg)^2\bigr)
\\
&+
k^2s^2\bigl((\partial_yf)^2+(\partial_yg)^2\bigr)
\Bigr]\,dx\,dy.
\end{split}
\]
Taking derivatives and evaluating
\[
\begin{aligned}
Q_{u_k}(V)
&=
\left.\frac{d^2}{ds^2}\right|_{s=0}E(u_{k,s})
\\
&=
\int_{[0,1]^2}
\Bigl[
k^{-2}\bigl((\partial_xf)^2+(\partial_xg)^2\bigr)
+
k^2\bigl((\partial_yf)^2+(\partial_yg)^2\bigr)
\Bigr]\,dx\,dy.
\end{aligned}
\]
Notice also that
\[
|df|_{h_k}^2
=
k^{-2}(\partial_xf)^2+k^2(\partial_yf)^2,
\qquad
d\mathrm{vol}_{h_k}=dx\,dy,
\]
and similarly for $g$.
Thus
\[
Q_{u_k}(V)
=
\int_{\Sigma_k}
\left(|df|_{h_k}^2+|dg|_{h_k}^2\right)
\,d\mathrm{vol}_{h_k}.
\]

It follows immediately that $\mathrm{Ind}_E(u_k)=0$. Moreover, the kernel of $Q_{u_k}$ consists precisely of those $V=(f,g)$ for which $d f=d g=0$. Since $\Sigma_k$ is connected, $f$ and $g$ must be constant, and hence $\mathrm{Null}_E(u_k)=2$. Consequently,
\[
\mathrm{Ind}_E^\ast(u_k)=2
\qquad\text{for every }k,
\]
whereas $\Lambda^1=+\infty$. Thus, without a positive Ricci curvature assumption on the target, even constancy of the extended Morse index does not imply finiteness of the limiting average collar lengths.
 
\subsection{Matching the collar and bubble weights in the one-bubble case}
\label{APPENDIX: Reduction to concentration-free collars}

In this section we sketch  the construction of a global  weight when one bubble
forms in one degenerating collar.  Thus we assume $M=1$ and $M_1=1$.
   The compatibility problem for several bubbles and several bubble
scales will be treated in \cite{DMRS}.

\paragraph{Decomposition of the collar.}
We omit the collar index $j=1$.  There exist sequences $a_k,b_k$ satisfying
\begin{equation}
\label{EQ: appendix ordering of bubble cylinder}
 T_k^{1,\varrho}\leq a_k\ll b_k\leq T_k^{2,\varrho}
\end{equation}
such that
\begin{equation}
\label{EQ: appendix one bubble decomposition}
 \mathtt P_k^\varrho
 =
 \mathtt G_k^0\cup\mathtt B_k\cup\mathtt G_k^1,
\end{equation}
where
\begin{equation}
\label{EQ: appendix three collar components}
 \mathtt G_k^0
 :=
 [T_k^{1,\varrho},a_k]\times S^1,
 \qquad
 \mathtt B_k
 :=
 [a_k,b_k]\times S^1,
 \qquad
 \mathtt G_k^1
 :=
 [b_k,T_k^{2,\varrho}]\times S^1.
\end{equation}
After suitable reparametrizations, $\mathtt G_k^0$ and
$\mathtt G_k^1$ converge to the geodesic segments
$\widetilde\gamma_\infty^{1,0}$ and
$\widetilde\gamma_\infty^{1,1}$, respectively, whereas
$\mathtt B_k$ contains the unique bubble
$\sigma_\infty^{1,1}$.

The modulus parameter of the entire degenerating collar is
\begin{equation}
\label{EQ: appendix global collar parameter}
 \delta_k^{\mathrm{col}}
 :=
 \exp\left(-\frac{2\pi^2}{l_k}\right).
\end{equation}
It is independent of $\nu$, because $\nu\in\{0,1\}$ labels the two
components obtained by removing the bubble region from the same collar.
If one conformally normalizes the truncated component
$\mathtt G_k^\nu$ separately, its modulus may depend on $\nu$; we denote
that auxiliary modulus by $d_{\eta,k}^\nu$.  Thus
$ \delta_k^{\mathrm{col}}
 \quad\text{and}\quad
 d_{\eta,k}^\nu$ 
have different meanings: the first describes the whole collar, while the
second describes a chosen normalization of one truncated component.

Let $q_k$ be the concentration point and let $\lambda_k\to0$ be the
bubble scale.  In a conformal coordinate centred at $q_k$, the no-neck
property takes the two-parameter form
\begin{equation}
\label{EQ: appendix two parameter no neck}
 \lim_{R\to+\infty}\lim_{\rho\to0}\limsup_{k\to\infty}
 \int_{B_\rho(0)\setminus\bar{B}_{R\lambda_k}(0)}
 |\nabla u_k|^2
 =
 0.
\end{equation}
By a diagonal argument we can find \begin{equation}
\label{EQ: appendix rho conditions}
 \rho_k\longrightarrow0,
 \qquad
 \frac{\lambda_k}{\rho_k^2}\longrightarrow0.
\end{equation}
 and 
\begin{equation}
\label{EQ: appendix diagonal no neck}
 \int_{B_{\rho_k}(0)\setminus
 \bar{B}_{\lambda_k/\rho_k}(0)}
 |\nabla u_k|^2
 \longrightarrow0.
\end{equation}

 The rescaled maps
\begin{equation}
\label{EQ: appendix rescaled bubble maps}
 v_k(y):=u_k(\lambda_k y)
\end{equation}
are defined on
 $ B_{\rho_k/\lambda_k}(0)$ 
and converge locally on $\mathbb R^2$ to
$\sigma_\infty^{1,1}$.

\par
Fix $\eta>0$ sufficiently small and set
\begin{equation}
\label{EQ: appendix local dilation}
 y:=\frac{\eta}{\rho_k}x.
\end{equation}
Under this dilation, $\mathcal A_k$ becomes
\begin{equation}
\label{EQ: appendix normalized neck annulus}
 A\bigl(\eta,\delta_{\eta,k}^{\mathrm{neck}}\bigr)
 :=
 B_\eta(0)\setminus
 \bar{B}_{\delta_{\eta,k}^{\mathrm{neck}}/\eta}(0),
 \qquad
 \delta_{\eta,k}^{\mathrm{neck}}
 :=
 \frac{\eta^2\lambda_k}{\rho_k^2}.
\end{equation}
  
\paragraph{The neck weight and the concentration-ball weight.}
Set
\begin{equation}
\label{EQ: appendix bubble interface radius}
 s_{\eta,k}
 :=
 \frac{\delta_{\eta,k}^{\mathrm{neck}}}{\eta}.
\end{equation}

We recall that in \cite{DGR25}   the following  densities have been introduced respectively on the neck annulus $B_\eta(0)\setminus\bar{B}_{s_{\eta,k}}(0)$ and on the concentration disk $B_{s_{\eta,k}}(0)$:
\begin{equation}
\label{EQ: appendix local neck weight}
 \omega_{\eta,k}^{\mathrm{neck},\sharp}(y)
 :=
 \frac{1}{|y|^2}
 \left[
  \left(\frac{|y|}{\eta}\right)^\beta
  +
  \left(
   \frac{\delta_{\eta,k}^{\mathrm{neck}}}{\eta|y|}
  \right)^\beta
  +
  \frac{1}{
   \log^2(\eta^2/\delta_{\eta,k}^{\mathrm{neck}})}
 \right];
\end{equation}
 \begin{equation}
\label{EQ: appendix concentration ball weight2}
 \omega_{\eta,k}^{\mathrm{bub},\sharp}(y)
 :=
 \frac{\eta^2}{(\delta_{\eta,k}^{\mathrm{neck}})^2}
 \left[
  \frac{1}{\eta^4}
  \frac{(1+\eta^2)^2}
       {\left(
        1+(\delta_{\eta,k}^{\mathrm{neck}})^{-2}|y|^2
       \right)^2}
  +
  \left(
   \frac{\delta_{\eta,k}^{\mathrm{neck}}}{\eta^2}
  \right)^\beta
  +
  \frac{1}{
   \log^2(\eta^2/\delta_{\eta,k}^{\mathrm{neck}})}
 \right].
\end{equation}

The two regions $B_\eta(0)\setminus\bar{B}_{s_{\eta,k}}(0)$ and ${B}_{s_{\eta,k}}(0)$ meet along
\begin{equation}
\label{EQ: appendix inner bubble interface}
 \Gamma_{\eta,k}^{\mathrm{in}}
 :=
 \{|y|=s_{\eta,k}\}.
\end{equation}
Their common cylindrical boundary value is
\begin{equation}
\label{EQ: appendix matching constant definition}
 \alpha_k
 :=
 s_{\eta,k}^2
 \omega_{\eta,k}^{\mathrm{bub},\sharp}
 (s_{\eta,k}e^{\mathrm i\theta}).
\end{equation}
The weight is radial, so $\alpha_k$ is independent of $\theta$.
At $|y|=s_{\eta,k}$, the stereographic term satisfies
\begin{equation}
\label{EQ: appendix stereographic boundary computation}
 \frac{1}{\eta^4}
 \frac{(1+\eta^2)^2}
 {\left(
  1+(\delta_{\eta,k}^{\mathrm{neck}})^{-2}|y|^2
 \right)^2}
 =
 1.
\end{equation}
Consequently,
\begin{align}
\label{EQ: appendix matching constant}
 \alpha_k
 &=
 1+
 \left(
  \frac{\delta_{\eta,k}^{\mathrm{neck}}}{\eta^2}
 \right)^\beta
 +
 \frac{1}{
  \log^2(\eta^2/\delta_{\eta,k}^{\mathrm{neck}})}
 \notag=
 1+
 \left(\frac{\lambda_k}{\rho_k^2}\right)^\beta
 +
 \frac{1}{\log^2(\rho_k^2/\lambda_k)}.
\end{align}
By \eqref{EQ: appendix rho conditions},
\begin{equation}
\label{EQ: appendix matching constant limit}
 \alpha_k\longrightarrow1,~~~\mbox{as $k\to +\infty$}.
\end{equation}
In particular, there exist constants $c,C>0$ such that
\begin{equation}
\label{EQ: appendix matching constant bounds}
 0<c\leq\alpha_k\leq C<+\infty
\end{equation}
for all sufficiently large $k$.

The cylindrical version of the neck weight is
\begin{equation}
\label{EQ: appendix cylindrical neck weight}
 \omega_{\eta,k}^{\mathrm{neck}}(r)
 :=
 r^2\omega_{\eta,k}^{\mathrm{neck},\sharp}(re^{\mathrm i\theta})
 =
 \left(\frac r\eta\right)^\beta
 +
 \left(
  \frac{\delta_{\eta,k}^{\mathrm{neck}}}{\eta r}
 \right)^\beta
 +
 \frac{1}{
  \log^2(\eta^2/\delta_{\eta,k}^{\mathrm{neck}})},~~~r=e^{-t}.
\end{equation}
Observe that
\begin{equation}
\label{EQ: appendix symmetric neck traces}
 \omega_{\eta,k}^{\mathrm{neck}}(\eta)
 =
 \omega_{\eta,k}^{\mathrm{neck}}(s_{\eta,k})
 =
 \alpha_k.
\end{equation}

\paragraph{Local interpolation on the collar side.}
We describe the construction for a fixed
$\nu\in\{0,1\}$.  Orient the  coordinate $z=e^{-t+\mathrm
i\theta}$ on $\mathtt G_k^\nu$ so that the interface with the bubble
region is $\{|z|=\eta\}$.  We denote the interpolation region by  
\begin{equation}
\label{EQ: appendix interpolation region}
 \mathcal O_{\eta,k}^\nu
 :=
 \left\{z\in\mathbb R^2:
  \frac{\eta}{4}\leq|z|\leq\eta
 \right\}.
\end{equation}
In cylindrical coordinates this becomes
\begin{equation}
\label{EQ: appendix cylindrical interpolation region}
 \mathcal O_{\eta,k}^\nu
 =
 \left[
  -\log\eta,
  -\log\frac{\eta}{4}
 \right]\times S^1.
\end{equation}
 Let $\omega_{\eta,k}^{\mathrm{col},\nu}$ denote the cylindrical collar
weight in this coordinate.  If $d_{\eta,k}^\nu$ is the modulus parameter
of the   component $\mathtt G_k^\nu$, then
\begin{equation}
\label{EQ: appendix component collar weight}
 \omega_{\eta,k}^{\mathrm{col},\nu}(r,\theta)
 :=
 \left(\frac r\eta\right)^\beta
 +
 \left(\frac{d_{\eta,k}^\nu}{\eta r}\right)^\beta
 +
 \frac{1}{\log^2(\eta^2/d_{\eta,k}^\nu)},~~r=e^{-t}.
\end{equation}
On $\mathcal O_{\eta,k}^\nu$, one has
\begin{equation}
\label{EQ: appendix first collar term bounds}
 4^{-\beta}
 \leq
 \left(\frac r\eta\right)^\beta
 \leq
 1
\end{equation}
and
\begin{equation}
\label{EQ: appendix second collar term bound}
 0
 \leq
 \left(\frac{d_{\eta,k}^\nu}{\eta r}\right)^\beta
 \leq
 \left(\frac{4d_{\eta,k}^\nu}{\eta^2}\right)^\beta.
\end{equation}
Since $d_{\eta,k}^\nu/\eta^2\to0$, as $k\to +\infty$, it follows that
\begin{equation}
\label{EQ: appendix collar weight bounds}
 0<c
 \leq
 \omega_{\eta,k}^{\mathrm{col},\nu}
 \leq
 C<+\infty
 \qquad\text{on }\mathcal O_{\eta,k}^\nu.
\end{equation}

We choose a nondecreasing function
$\chi\in C^\infty([0,+\infty),[0,1])$ satisfying
\begin{equation}
\label{EQ: appendix cutoff properties}
 \chi(s)=0\quad\text{for }s\leq\frac14,
 \qquad
 \chi(s)=1\quad\text{for }s\geq\frac12.
\end{equation}
We modify the collar weight only on
$\mathcal O_{\eta,k}^\nu$, setting
\begin{equation}
\label{EQ: appendix modified collar weight}
 \mathfrak z_{\eta,k}^\nu(t,\theta)
 :=
 \begin{cases}
 \omega_{\eta,k}^{\mathrm{col},\nu}(t,\theta),
 &e^{-t}\leq\eta/4,\\[1mm]
 \displaystyle
 \left(
  1-\chi\left(\frac{e^{-t}}{\eta}\right)
 \right)
 \omega_{\eta,k}^{\mathrm{col},\nu}(t,\theta)
 +
 \chi\left(\frac{e^{-t}}{\eta}\right)\alpha_k,
 &\eta/4<e^{-t}<\eta/2,\\[3mm]
 \alpha_k,
 &\eta/2\leq e^{-t}\leq\eta.
 \end{cases}
\end{equation}
 The modified weight agrees with the original one away from the interface
and has the matching trace
\begin{equation}
\label{EQ: appendix exact collar bubble matching}
 \mathfrak z_{\eta,k}^\nu(-\log\eta,\theta)
 =
 \alpha_k
 =
 \eta^2
 \widehat\omega_{\eta,k}^{\mathrm{bub},\sharp}
 (\eta e^{\mathrm i\theta}).
\end{equation}

{\bf Claim:} 
 There is a constant $C$ such that for $\sigma\in (0,1/2)$ it holds:
\begin{equation}
\label{EQ: appendix local weight comparison}
 C^{-1}\omega_{\eta,k}^{\mathrm{col},\nu}
 \leq
 \mathfrak z_{\eta,k}^\nu
 \leq
 C\omega_{\eta,k}^{\mathrm{col},\nu}\leq C \mathfrak{w}_{\sigma,\eta,k}^\nu
 \qquad\text{in }\mathtt G_k^\nu.
\end{equation}

\medskip
\noindent
\textbf{Proof of the claim.}
We divide the proof into two steps.

\smallskip
\noindent
\emph{\bf{Step 1:} comparison between the modified weight and the collar
weight.}
By construction, the modified weight
$\mathfrak z_{\eta,k}^\nu$ coincides with
$\omega_{\eta,k}^{\mathrm{col},\nu}$ outside the interpolation region
$\mathcal O_{\eta,k}^\nu$. Therefore,
\begin{equation}
\label{EQ: appendix z equals collar away overlap}
 \mathfrak z_{\eta,k}^\nu
 =
 \omega_{\eta,k}^{\mathrm{col},\nu}
 \qquad\text{in }
 \mathtt G_k^\nu\setminus\mathcal O_{\eta,k}^\nu.
\end{equation}

Inside $\mathcal O_{\eta,k}^\nu$, the modified weight is a convex
interpolation between the collar weight and the matching constant
$\alpha_k^\nu$. More precisely,
\begin{equation}
\label{EQ: appendix z convex interpolation}
 \mathfrak z_{\eta,k}^\nu
 =
 (1-\chi_{\eta,k}^\nu)
 \omega_{\eta,k}^{\mathrm{col},\nu}
 +
 \chi_{\eta,k}^\nu\alpha_k^\nu,
 \qquad
 0\leq\chi_{\eta,k}^\nu\leq1.
\end{equation}
Equations \eqref{EQ: appendix matching constant bounds} and
\eqref{EQ: appendix collar weight bounds} imply that, on
$\mathcal O_{\eta,k}^\nu$, the two quantities
$\alpha_k^\nu$ and
$\omega_{\eta,k}^{\mathrm{col},\nu}$ are uniformly comparable:
\begin{equation}
\label{EQ: appendix matching collar comparability}
 C^{-1}\omega_{\eta,k}^{\mathrm{col},\nu}
 \leq
 \alpha_k^\nu
 \leq
 C\omega_{\eta,k}^{\mathrm{col},\nu}.
\end{equation}
Inserting this estimate into
\eqref{EQ: appendix z convex interpolation} gives
\begin{equation}
\label{EQ: appendix z collar comparison on overlap}
 C^{-1}\omega_{\eta,k}^{\mathrm{col},\nu}
 \leq
 \mathfrak z_{\eta,k}^\nu
 \leq
 C\omega_{\eta,k}^{\mathrm{col},\nu}
 \qquad\text{in }\mathcal O_{\eta,k}^\nu.
\end{equation}
Together with \eqref{EQ: appendix z equals collar away overlap}, this
proves
\begin{equation}
\label{EQ: appendix global z collar comparison}
 C^{-1}\omega_{\eta,k}^{\mathrm{col},\nu}
 \leq
 \mathfrak z_{\eta,k}^\nu
 \leq
 C\omega_{\eta,k}^{\mathrm{col},\nu}
 \qquad\text{in }\mathtt G_k^\nu.
\end{equation}

\smallskip
\noindent
\emph{{\bf Step 2: comparison with the  weight $\mathfrak{w}_{\sigma,\eta,k}^\nu$}}

Write $z=re^{\mathrm i\theta}$ and represent the normalized component
by
\begin{equation}
\label{EQ: appendix component normalized annulus}
 A_{\eta,k}^\nu
 :=
 B_\eta(0)\setminus
 \bar{B}_{d_{\eta,k}^\nu/\eta}(0).
\end{equation}
The collar weight is
\begin{equation}
\label{EQ: appendix explicit collar weight}
 \omega_{\eta,k}^{\mathrm{col},\nu}(r)
 =
 \left(\frac r\eta\right)^\beta
 +
 \left(\frac{d_{\eta,k}^\nu}{\eta r}\right)^\beta
 +
 \frac{1}{(T_{\eta,k}^\nu)^2}.
\end{equation}
Set
\begin{equation}
\label{EQ: appendix component scale ratio}
 a_{\eta,k}^\nu
 :=
 \frac{d_{\eta,k}^\nu}{\eta^2},
 \qquad
 \varepsilon_{\eta,k}^\nu
 :=
 \bigl(a_{\eta,k}^\nu\bigr)^\sigma,
 \qquad
 T_{\eta,k}^\nu
 :=
 \log\left(\frac{\eta^2}{d_{\eta,k}^\nu}\right),
 \qquad
 0<\sigma<\frac12.
\end{equation}
Since $a_{\eta,k}^\nu\to0$, for all sufficiently large $k$,
\begin{equation}
\label{EQ: appendix scale ratio comparison}
 0<a_{\eta,k}^\nu
 \leq
 \varepsilon_{\eta,k}^\nu
 \leq1.
\end{equation}

The  annulus is decomposed into
\begin{align}
\label{EQ: appendix inner subannulus}
 A_{\mathrm{in},\eta,k}^\nu
 &:=
 \left\{
  \frac{d_{\eta,k}^\nu}{\eta}
  \leq r\leq
  \frac{d_{\eta,k}^\nu}
       {\eta\varepsilon_{\eta,k}^\nu}
 \right\},
\\
\label{EQ: appendix middle subannulus}
 A_{\mathrm{mid},\eta,k}^\nu
 &:=
 \left\{
  \frac{d_{\eta,k}^\nu}
       {\eta\varepsilon_{\eta,k}^\nu}
  \leq r\leq
  \eta\varepsilon_{\eta,k}^\nu
 \right\},
\\
\label{EQ: appendix outer subannulus}
 A_{\mathrm{out},\eta,k}^\nu
 &:=
 \left\{
  \eta\varepsilon_{\eta,k}^\nu
  \leq r\leq\eta
 \right\}.
\end{align}
Their radial endpoints are correctly ordered because
\begin{equation}
\label{EQ: appendix ordered radial endpoints}
 \frac{d_{\eta,k}^\nu}{\eta}
 =
 \eta a_{\eta,k}^\nu
 \leq
 \eta(a_{\eta,k}^\nu)^{1-\sigma}
 =
 \frac{d_{\eta,k}^\nu}
      {\eta\varepsilon_{\eta,k}^\nu}
 \leq
 \eta(a_{\eta,k}^\nu)^\sigma
 =
 \eta\varepsilon_{\eta,k}^\nu
 \leq\eta.
\end{equation}

We now compare the two weights separately on these three regions.

\smallskip
\noindent
\underline{\emph{Outer sub-annulus.}}\par

On $A_{\mathrm{out},\eta,k}^\nu$,  we have 
\begin{equation}
\label{EQ: appendix positivity weight outer formula}
 \mathfrak w_{\sigma,\eta,k}^\nu(r)
 =
 \left(\frac r\eta\right)^\beta
 +
 \left(
  \varepsilon_{\eta,k}^\nu\frac{\eta}{r}
 \right)^\beta
 +
 \frac{1}{\sigma^2(T_{\eta,k}^\nu)^2}.
\end{equation}
Both
\eqref{EQ: appendix explicit collar weight} and
\eqref{EQ: appendix positivity weight outer formula} contain the term
\begin{equation}
\label{EQ: appendix common outer first term}
 \left(\frac r\eta\right)^\beta.
\end{equation}
For the second terms, we have
\begin{equation}
\label{EQ: appendix outer second term comparison}
 \frac{d_{\eta,k}^\nu}{\eta r}
 =
 a_{\eta,k}^\nu\frac{\eta}{r}
 \leq
 \varepsilon_{\eta,k}^\nu\frac{\eta}{r},
\end{equation}
and therefore, since $\beta>0$,
\begin{equation}
\label{EQ: appendix outer powered comparison}
 \left(\frac{d_{\eta,k}^\nu}{\eta r}\right)^\beta
 \leq
 \left(
  \varepsilon_{\eta,k}^\nu\frac{\eta}{r}
 \right)^\beta.
\end{equation}
Finally,
\begin{equation}
\label{EQ: appendix outer logarithmic comparison}
 \frac{1}{(T_{\eta,k}^\nu)^2}
 \leq
 \frac{1}{\sigma^2(T_{\eta,k}^\nu)^2}.
\end{equation}
Thus every term of the collar weight $ \mathfrak w_{\sigma,\eta,k}^\nu$  is bounded by the corresponding
term of the positivity weight on
$A_{\mathrm{out},\eta,k}^\nu$.

\smallskip
\noindent
\underline{\emph{Middle sub-annulus.}}
On $A_{\mathrm{mid},\eta,k}^\nu$, the  weight $ \mathfrak w_{\sigma,\eta,k}^\nu$ is
\begin{equation}
\label{EQ: appendix positivity weight middle formula}
 \mathfrak w_{\sigma,\eta,k}^\nu(r)
 =
 \left(
  \frac{r}{\eta\varepsilon_{\eta,k}^\nu}
 \right)^\beta
 +
 \left(
  \frac{d_{\eta,k}^\nu}
       {\eta\varepsilon_{\eta,k}^\nu r}
 \right)^\beta
 +
 \frac{1}{(1-2\sigma)^2(T_{\eta,k}^\nu)^2}.
\end{equation}
Since $\varepsilon_{\eta,k}^\nu\leq1$,
\begin{equation}
\label{EQ: appendix middle base comparisons}
 \frac r\eta
 \leq
 \frac{r}{\eta\varepsilon_{\eta,k}^\nu},
 \qquad
 \frac{d_{\eta,k}^\nu}{\eta r}
 \leq
 \frac{d_{\eta,k}^\nu}
      {\eta\varepsilon_{\eta,k}^\nu r}.
\end{equation}
Raising these inequalities to the positive power $\beta$ compares
the two power terms. Moreover, because $0<1-2\sigma<1$,
\begin{equation}
\label{EQ: appendix middle logarithmic comparison}
 \frac{1}{(T_{\eta,k}^\nu)^2}
 \leq
 \frac{1}{(1-2\sigma)^2(T_{\eta,k}^\nu)^2}.
\end{equation}
Hence
\begin{equation}
\label{EQ: appendix middle complete comparison}
 \omega_{\eta,k}^{\mathrm{col},\nu}
 \leq
 \mathfrak w_{\sigma,\eta,k}^\nu
 \qquad\text{on }A_{\mathrm{mid},\eta,k}^\nu.
\end{equation}

\smallskip
\noindent
\underline{\emph{Inner sub-annulus.}}\par
On $A_{\mathrm{in},\eta,k}^\nu$, the  weight $\mathfrak w_{\sigma,\eta,k}^\nu$ is
\begin{equation}
\label{EQ: appendix positivity weight inner formula}
 \mathfrak w_{\sigma,\eta,k}^\nu(r)
 =
 \left(
  \frac{r\eta\varepsilon_{\eta,k}^\nu}
       {d_{\eta,k}^\nu}
 \right)^\beta
 +
 \left(
  \frac{d_{\eta,k}^\nu}{\eta r}
 \right)^\beta
 +
 \frac{1}{\sigma^2(T_{\eta,k}^\nu)^2}.
\end{equation}
The \emph{second} power term  agrees   in the two
weights $\mathfrak w_{\sigma,\eta,k}^\nu(r)$ and $ \omega_{\eta,k}^{\mathrm{col},\nu}:$
\begin{equation}
\label{EQ: appendix common inner second term}
 \left(\frac{d_{\eta,k}^\nu}{\eta r}\right)^\beta.
\end{equation}
For the first terms, using
$d_{\eta,k}^\nu=a_{\eta,k}^\nu\eta^2$, we obtain
\begin{equation}
\label{EQ: appendix inner first term comparison}
 \frac{r\eta\varepsilon_{\eta,k}^\nu}
      {d_{\eta,k}^\nu}
 =
 \frac r\eta
 \frac{\varepsilon_{\eta,k}^\nu}
      {a_{\eta,k}^\nu}
 \geq
 \frac r\eta,
\end{equation}
because
$a_{\eta,k}^\nu\leq\varepsilon_{\eta,k}^\nu$. Since $\beta>0$,
\begin{equation}
\label{EQ: appendix inner powered comparison}
 \left(\frac r\eta\right)^\beta
 \leq
 \left(
  \frac{r\eta\varepsilon_{\eta,k}^\nu}
       {d_{\eta,k}^\nu}
 \right)^\beta.
\end{equation}
The logarithmic term is treated as on the outer sub-annulus. Therefore,
\begin{equation}
\label{EQ: appendix inner complete comparison}
 \omega_{\eta,k}^{\mathrm{col},\nu}
 \leq
 \mathfrak w_{\sigma,\eta,k}^\nu
 \qquad\text{on }A_{\mathrm{in},\eta,k}^\nu.
\end{equation}

Combining the estimates on the three sub-annuli gives
\begin{equation}
\label{EQ: appendix collar positivity final comparison}
 \omega_{\eta,k}^{\mathrm{col},\nu}
 \leq
 \mathfrak w_{\sigma,\eta,k}^\nu
 \qquad\text{in }\mathtt G_k^\nu.
\end{equation}
Together with \eqref{EQ: appendix global z collar comparison}, this
proves
\begin{equation}
\label{EQ: appendix local weight comparison}
 C^{-1}\omega_{\eta,k}^{\mathrm{col},\nu}
 \leq
 \mathfrak z_{\eta,k}^\nu
 \leq
 C\omega_{\eta,k}^{\mathrm{col},\nu}
 \leq
 C\mathfrak w_{\sigma,\eta,k}^\nu
 \qquad\text{in }\mathtt G_k^\nu.
\end{equation}
\hfill$\Box$ \paragraph{Pointwise estimate in the interpolation region.}
Let us explain how the pointwise estimate can be recovered with the modification to the weight.

Recall that, in the conformal coordinate
$z=re^{\mathrm i\theta}$ used on the component
$\mathtt G_k^\nu$, the interpolation region is
\begin{equation}
\label{EQ: appendix interpolation region recalled}
 \mathcal O_{\eta,k}^\nu
 :=
 \left\{
  z\in\mathbb R^2:
  \frac{\eta}{4}\leq|z|\leq\eta
 \right\}.
\end{equation}
Introduce the enlarged annulus
\begin{equation}
\label{EQ: appendix enlarged interpolation region}
 \widetilde{\mathcal O}_{\eta,k}^\nu
 :=
 \left\{
  z\in\mathbb R^2:
  \frac{\eta}{8}\leq|z|\leq2\eta
 \right\}.
\end{equation}
We may always assume that in this enlarged annulus there is no-concentration of the energy:
\begin{equation}
\label{EQ: appendix enlarged interpolation energy}
 \lim_{\eta\to0}\limsup_{k\to\infty}
 \int_{\widetilde{\mathcal O}_{\eta,k}^\nu}
 |\nabla_z u_k^\sharp|^2\,dz
 =
 0.
\end{equation}

For every $z\in\mathcal O_{\eta,k}^\nu$, we have 
\begin{equation}
\label{EQ: appendix epsilon regularity ball inclusion}
 B_{\eta/8}(z)
 \subset
 \widetilde{\mathcal O}_{\eta,k}^\nu.
\end{equation}
The $\varepsilon$-regularity estimate and conformal invariance give
\begin{align}
\label{EQ: appendix interpolation epsilon regularity}
 |\nabla u_k(t,\theta)|^2
 &=
 |z|^2|\nabla u_k^\sharp(z)|^2
 \leq
 C\frac{|z|^2}{\eta^2}
 \|\nabla u_k^\sharp\|_{L^2(B_{\eta/8}(z))}^2
 =
 o_{\eta,k}(1),
\end{align}
where
\begin{equation}
\label{EQ: appendix small quantity convention}
 \lim_{\eta\to0}\limsup_{k\to\infty}
 o_{\eta,k}(1)=0.
\end{equation}
Since $\mathfrak z_{\eta,k}^\nu\geq c>0$ on
$\mathcal O_{\eta,k}^\nu$, we conclude that
\begin{equation}
\label{EQ: appendix vanishing transition quotient}
 \lim_{\eta\to0}\limsup_{k\to\infty}
 \left\|
  \frac{|\nabla u_k|^2}{\mathfrak z_{\eta,k}^\nu}
 \right\|_{L^\infty(\mathcal O_{\eta,k}^\nu)}
 =
 0.
\end{equation}
This vanishing is asserted only on the concentration-free interpolation
region.  On the remainder of $\mathtt G_k^\nu$, the original pointwise
collar estimate gives
\begin{equation}
\label{EQ: appendix geodesic component quotient}
 \limsup_{\eta\to0}\limsup_{k\to\infty}
 \left\|
  \frac{|\nabla u_k|^2}{\mathfrak z_{\eta,k}^\nu}
 \right\|_{L^\infty(\mathtt G_k^\nu)}
 <
 +\infty.
\end{equation}

\paragraph{The global weight.}
We now combine the weights constructed on the different parts of
$\Sigma_k$. Recall that the degenerating collar is decomposed as
\begin{equation}
\label{EQ: appendix collar decomposition recalled}
 \mathtt P_k^\varrho
 =
 \mathtt G_k^0
 \cup
 \mathtt B_k
 \cup
 \mathtt G_k^1,
\end{equation}
where $\mathtt B_k$ contains the concentration region and the unique
bubble, while $\mathtt G_k^0$ and $\mathtt G_k^1$ are the two
concentration-free components connecting the bubble region to the two
ends of the collar.

On $\mathtt G_k^\nu$, $\nu\in\{0,1\}$, we use the modified collar
weight $\mathfrak z_{\eta,k}^\nu$. On $\mathtt B_k$, we retain the
complete bubble weight $\omega_{\eta,k}^{\mathrm{bub}}$, including its
definition inside the concentration ball. Finally, away from the
degenerating collar, we use the usual base-map weight
$\omega_{\eta,k}^{\mathrm{base}}$. We therefore define
\begin{equation}
\label{EQ: appendix global weight definition}
 \mathfrak z_{\eta,k}
 :=
 \begin{cases}
  \mathfrak z_{\eta,k}^0,
  &\text{in }\mathtt G_k^0,\\[1mm]
  \omega_{\eta,k}^{\mathrm{bub}},
  &\text{in }\mathtt B_k,\\[1mm]
  \mathfrak z_{\eta,k}^1,
  &\text{in }\mathtt G_k^1,\\[1mm]
  \omega_{\eta,k}^{\mathrm{base}},
  &\text{in }\Sigma_k\setminus\mathtt P_k^\varrho.
 \end{cases}
\end{equation}

At the interface between $\mathtt G_k^\nu$ and $\mathtt B_k$, the
modified collar weight is constant and equal to the corresponding
matching value $\alpha_k^\nu$. By
\eqref{EQ: appendix exact collar bubble matching}, this value agrees
with the cylindrical trace of the bubble weight:
\begin{equation}
\label{EQ: appendix global interface matching}
 \left.
 \mathfrak z_{\eta,k}^\nu
 \right|_{\partial\mathtt G_k^\nu\cap\partial\mathtt B_k}
 =
 \alpha_k^\nu
 =
 \left.
 \omega_{\eta,k}^{\mathrm{bub}}
 \right|_{\partial\mathtt G_k^\nu\cap\partial\mathtt B_k}.
\end{equation}
Thus the piecewise definition
\eqref{EQ: appendix global weight definition} is continuous across the
bubble--collar interfaces. Outside $\mathtt P_k^\varrho$, the weight is completed by the
thick-part construction of \cite{DGR25}. Since the present argument
concerns only the bubble--collar interfaces, we do not repeat that
construction here.

   .

On each
concentration-free component $\mathtt G_k^\nu$, the estimates proved
above give
\begin{equation}
\label{EQ: appendix quotient bound geodesic pieces}
 \limsup_{\eta\to0}\limsup_{k\to\infty}
 \left\|
  \frac{|\nabla u_k|^2}
       {\mathfrak z_{\eta,k}^\nu}
 \right\|_{L^\infty(\mathtt G_k^\nu)}
 <
 +\infty.
\end{equation}
On $\mathtt B_k$, the corresponding estimate follows from the bubble
weight construction of \cite{DGR25}:
\begin{equation}
\label{EQ: appendix quotient bound bubble piece}
 \limsup_{\eta\to0}\limsup_{k\to\infty}
 \left\|
  \frac{|\nabla u_k|^2}
       {\omega_{\eta,k}^{\mathrm{bub}}}
 \right\|_{L^\infty(\mathtt B_k)}
 <
 +\infty.
\end{equation}
Combining
\eqref{EQ: appendix quotient bound geodesic pieces} and
\eqref{EQ: appendix quotient bound bubble piece}, we obtain
\begin{equation}
\label{EQ: appendix global quotient bound}
 \limsup_{\eta\to0}\limsup_{k\to\infty}
 \left\|
  \frac{|\nabla u_k|^2}
       {\mathfrak z_{\eta,k}}
 \right\|_{L^\infty(\mathtt P_k^\varrho)}
 <
 +\infty.
\end{equation}

It remains to record the positivity property on the
concentration-free components. By
\eqref{EQ: appendix local  weight comparison}, there exists a
constant $C>0$, independent of $\eta$ and $k$, such that
\begin{equation}
\label{EQ: appendix global positivity weight comparison}
 \mathfrak z_{\eta,k}^\nu
 \leq
 C\mathfrak w_{\sigma,\eta,k}^\nu
 \qquad\text{in }\mathtt G_k^\nu.
\end{equation}

   The compactness argument of
Sections~\ref{SECTION: Stability of the Morse Index} and
\ref{SECTION: Proof of Main Theorem} can therefore be applied on
$\mathtt G_k^0$ and $\mathtt G_k^1$, whereas the contribution of the
unique bubble contained in $\mathtt B_k$ is treated by the bubble-tree
analysis of \cite{DGR25}. This establishes the required compatibility
of the weights in the case of one degenerating collar and one bubble,
namely $M=1$ and $M_1=1$.

\end{document}